\newtheorem{theorem}{Theorem}[section]
\newtheorem{definition}{Definiton}[section]
\newtheorem{lemma}{Lemma}[section]
\newtheorem{proposition}{Proposition}[section]
\newtheorem{corollary}{Corollary}[section]
\theoremstyle{remark}
\newtheorem{remark}{Remark}[section]
\def\O{\Omega}
\def\n{\nabla}
\def\p{\partial}
\def\n{\nabla}
\def\o{\omega}
\def\O{\Omega}
\def\p{\partial}
\def\g{\gamma}
\def\k{\kappa}
\def\s{\sigma}
\def\n{\nabla}
\def\<{\langle}
\def\>{\rangle}
\def\n{\nabla}
\def\o{\omega}
\def\O{\Omega}
\def\p{\partial}
\def\g{\gamma}
\def\s{\sigma}
\def\Rn{\mathbb R^n}
\def\R{\mathbb{R}}
\def\rr{\mathbb{R}}
\begin{document}
\title[Anisotropic $p$-capacity and anisotropic Minkowski inequality]
{Anisotropic $p$-capacity and anisotropic Minkowski inequality}

\author{Chao Xia, Jiabin Yin}
\address{School of Mathematical Sciences, Xiamen University, Xiamen, 361005, China}
\email{chaoxia@xmu.edu.cn}
\address{School of Mathematical Sciences, Xiamen University, Xiamen, 361005, China}
\email{jiabinyin@126.com}

\thanks{2020 {\it Mathematics Subject Classification.} 53C21, 31C15, 52A39, 49Q10.}
\thanks{CX was  supported by  NSFC (Grant No. 11871406).
}


\date{}

\keywords{Minkowski inequality; anisotropy mean curvature; anisotropic $p$-Laplacian; nonlinear potential theory; $p$-capacity.}

\begin{abstract}
In this paper, we prove a sharp anisotropic $L^p$ Minkowski inequality involving the total
$L^p$ anisotropic mean curvature and the anisotropic $p$-capacity, for any bounded domains with smooth boundary in $\rr^n$.
As consequences, we obtain an anisotropic Willmore inequality,  a sharp anisotropic Minkowski inequality for outward $F$-minimising sets and a sharp volumetric anisotropic Minkowski inequality.  For the proof, we utilize a nonlinear potential theoretic approach which has been recently developed in \cite{AFM1}. \end{abstract}

\maketitle

\medskip

\tableofcontents

\numberwithin{equation}{section}
\section{Introduction}\label{sect:1}

The Minkowski inequality in the theory of convex geometry says that if $\O\subset\mathbb{R}^n$ is a convex domain with smooth boundary, then
\begin{eqnarray}\label{mink-1}
\frac{1}{\omega_{n-1}}\int_{\p \O} \frac{H}{n-1} d\sigma \ge\left(\frac{|\p \O|}{\o_{n-1}}\right)^\frac{n-2}{n-1},
\end{eqnarray}
 with equality holding if and only if $\O$ is a round ball. Here $H$ is the mean curvature of $\p\O$ with respect to the outward unit normal and $\o_{n-1}=|\mathbb{S}^{n-1}|,$ the area of unit $(n-1)$-sphere.


In recent decades, many mathematicians are interested in the question whether \eqref{mink-1} holds for non-convex domains.
By using smooth solutions to inverse mean curvature flow, Guan-Li \cite{GL} proved that \eqref{mink-1} holds for star-shaped domains with mean convex boundary. Huisken-Ilmanen \cite{HI} developed a level set weak formulation for inverse mean curvature flow, which was adopted by Huisken \cite{Hui} to prove \eqref{mink-1} for outward minimising sets (see also \cite{FS}).
Quite recently, a powerful and elegant approach based on nonlinear potential theory has been developed by Agostiniani-Fogagnolo-Mazzieri in \cite{AFM1} (see also related papers \cite{AFM, AM, FMP}), which enables them to establish a sharp $L^p$ version of Minkowski inequality involving the $p$-capacity of $\O$ ($1<p<n$)
for general bounded domain with smooth boundary
\begin{equation}\label{eqn:0.1}
\frac1{\omega_{n-1}}
\int\limits_{\partial\Omega}\Big|\frac{H}{n-1}\Big|^p d\sigma\ge \left[\big(\frac{p-1}{n-p}\big)^{p-1}\frac1{\o_{n-1}}{\rm Cap}_{p}
(\Omega)\right]^{\frac{n-p-1}{n-p}}.
\end{equation}
Here ${\rm Cap}_{p}
(\Omega)$ is called by the $p$-capacity of $\O$, given by
$${\rm Cap}_{p}
(\Omega)=\inf\Big\{\int_{\rr^n}|\nabla u|^p dx: u\in C_c^\infty(\rr^n), u\ge 1\hbox{ on }\O\Big\}.$$
We remark that various geometric inequalities involving the mean curvature integral, the capacity and the surface area have been considered by Xiao \cite{Xiao2}. In particular, \eqref{eqn:0.1} has been proved in \cite[Theorem 3.1]{Xiao2} in the case of star-shaped, mean convex domains.
As $p\to 1$ in \eqref{eqn:0.1}, Agostiniani-Fogagnolo-Mazzieri have showed that the limit version of \eqref{eqn:0.1} recovers \eqref{mink-1} for outward minimising sets by Huisken \cite{Hui}. Moreover, they are able to obtain a volumetric version of Minkowski inequality
\begin{equation}\label{vol-mink}
\frac1{\o_{n-1}}\int\limits_{\partial\Omega}\Big|\frac{H}{n-1}\Big|
d\sigma \ge \left(\frac{|\O|}{|\mathbb{B}^n|}\right)^{\frac{n-2}{n}}.
\end{equation}
for any bounded domains. Previously, \eqref{vol-mink} has been shown by Trudinger \cite{Tr} and Chang-Wang \cite{CW} (see also \cite{Qiu}) for bounded domains with $2$-convex boundary.
 The idea of Agostiniani-Fogagonolo-Mazzieri in \cite{AFM1}  is based on  a conformal change of the Euclidean
metric and a monotone integral over level sets of $p$-capacitary potentials under the conformal metric. Actually, it turns out that the results can be obtained by working directly on the Euclidean
metric rather than the conformal metric. It is worth highlighting that their approach can be used  to obtain sharp geometric inequalities for hypersurfaces in manifolds with nonnegative Ricci curvature \cite{AFM,AM,FMP} as well as for static manifolds \cite{AM1, BM, BM1}, see also \cite{BrayM, Xiao1}.




 In this paper, we are concerned with an anisotropic version of Minkowski inequality.
Let $F\in C^{\infty}(\R^{n}\setminus\{0\})$ be a Minkowski norm on $\R^n$. The unit Wulff ball $\mathcal{W}_F$ with respect to $F$ (centered at the origin) is given by $$\mathcal{W}_F=\{F^o(x)<1\},$$ where $F^o$ is the dual norm of $F$.
For a bounded domain $\O$ with smooth boundary, the {\it anisotropic area} of $\p \O$ is defined by $$|\p\O|_F=\int_{\p\O}F(\nu)d\sigma.$$
The well-known Wulff theorem  (see e.g  Theorem 20.8 in \cite{Maggi}) says that Wulff balls are the only minimizers for the {\it anisotropic isoperimetric problem}. Equivalently,
the {\it Wulff inequality} holds true:
\begin{equation}\label{wulff-ineq}
  |\p \Omega|_F\geq n|\mathcal{W}_F|^{\frac1n}|\Omega|^{1-\frac1n}.
\end{equation}
Equality in \eqref{wulff-ineq} holds if and only if $\Omega$ is a Wulff ball.

The anisotropic mean curvature $H_F$ of $\p\O$ arises from the first variational formula for $|\p\O|_F$.
The anisotropic Minkowski inequality
says that  if $\O\subset\mathbb{R}^n$ is a {\it convex} domain with smooth boundary, then
\begin{eqnarray}\label{mink-2}
\frac{1}{\kappa_{n-1}}\int_{\p \O} \frac{H_F}{n-1} F(\nu)d\sigma\ge\left(\frac{|\p\O|_F}{\k_{n-1}}\right)^\frac{n-2}{n-1},
\end{eqnarray}
 with equality holding if and only if $\O$ is a Wulff ball. Here $\k_{n-1}=n|\mathcal{W}_F|$. When $F$ is the Euclidean norm $F(\xi)=|\xi|$, then \eqref{wulff-ineq} and \eqref{mink-2} reduce to the classical isoperimetric inequality and the Minkowski inequality \eqref{mink-1} respectively.
Inequality \eqref{mink-2} is in fact equivalent to the following Minkowski inequality for mixed volumes
\begin{eqnarray}\label{eqn:0.2'}
V_{(2)}(\O, \mathcal{W}_F)\ge V_{1}(\O, \mathcal{W}_F)^{\frac{n-2}{n-1}}|\mathcal{W}_F|^{\frac1{n-1}}
\end{eqnarray}
where $V_{i}(\O, \mathcal{W}_F)$ is the mixed volume for two convex bodies  $\bar{\O}$ and $\bar{\mathcal{W}_F}$, which is given by
$$\left|\O+t\mathcal{W}_F\right|= \sum_{k=0}^{n}\binom{n}{k}t^kV_{(k)}(\O, \mathcal{W}_F).$$
Inequality \eqref{eqn:0.2'} is a special one for a class of Alexandrov-Fenchel inequalities for mixed volumes in the theory of convex bodies, see e.g. Schneider's celebrated book \cite{Sch} Section 7.3, or the Introduction in \cite{Xia}.

Regarding \eqref{mink-2}, the first named author \cite{Xia} showed that \eqref{mink-2} holds for star-shaped domains with $F$-mean convex ($H_F\ge 0$) boundary, by using the smooth solution to the inverse anisotropic mean curvature flow, in the same spirit of Guan-Li \cite{GL}.
Della Pietra, Gavitone and the first named author \cite{DGX} has considered the level set formulation of the inverse anisotropic mean curvature flow, in the spirit of Huisken-Ilmanen \cite{HI}, and proved the existence of the weak solutions by using $L^p$-type approximation. However, such approximation seems not sufficient to prove \eqref{mink-2} for  outward $F$-minimising sets.

In this paper, we will use the method developed in  \cite{AFM1, AM, FMP} to study \eqref{mink-2} for non-convex domains. 
 For $1<p<n$, the {\it anisotropic $p$-capacity} of $\O$ is defined by
$${\rm Cap}_{F, p}(\Omega)=\inf\Big\{\int_{\mathbb R^n}F^p(\nabla u) dx: u\in C_c^\infty(\rr^n), u\ge 1\hbox{ on }\O\Big\}.$$
We remark that there are early consideration for basic properties of the anisotropic $p$-capacity, see e.g. Maz'ya's book \cite{Maz}, Section 2.2. Also, the Brunn-Minkowski theory has been extended to the anisotropic $p$-capacity \cite{AGHLV, ALSV}.

In the following we generalize \eqref{eqn:0.1} to the anisotropic case.
\begin{theorem}
\label{thm:1.1}
Let {$\Omega\subset\mathbb R^n(n\geq3)$} be a bounded domain with smooth
boundary. Then, for every $1 < p < n$, the following inequality holds
\begin{equation}\label{eqn:1.02}
\frac1{\kappa_{n-1}}
\int\limits_{\partial\Omega}\Big|\frac{H_F}{n-1}\Big|^pF(\nu)d\sigma\ge \left[\big(\frac{p-1}{n-p}\big)^{p-1}\frac1{\kappa_{n-1}}{\rm Cap}_{F,p}
(\Omega)\right]^{\frac{n-p-1}{n-p}}.
\end{equation}
Moreover, equality holds in \eqref{eqn:1.02} if and only if $\Omega$ is a Wulff  ball.
\end{theorem}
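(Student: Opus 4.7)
The plan is to adapt the nonlinear potential-theoretic strategy of \cite{AFM1} to the anisotropic setting. Let $u$ be the anisotropic $(F,p)$-capacitary potential of $\Omega$, i.e.\ the unique weak solution of
\begin{equation*}
\mathrm{div}\bigl(F^{p-1}(\nabla u)\,\nabla F(\nabla u)\bigr)=0 \quad \text{in }\mathbb{R}^n\setminus\overline{\Omega},
\end{equation*}
with $u=1$ on $\partial\Omega$ and $u(x)\to 0$ as $|x|\to\infty$. The variational characterization gives $\mathrm{Cap}_{F,p}(\Omega)=\int_{\mathbb{R}^n\setminus\Omega}F^p(\nabla u)\,dx$, and known regularity for the anisotropic $p$-Laplacian yields $u\in C^{1,\alpha}$ up to $\partial\Omega$ and smoothness on the regular set $\{|\nabla u|\neq 0\}$. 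A first technical input is a sharp asymptotic expansion of $u$ at infinity, modelled on the anisotropic fundamental solution $c\, F^o(x)^{-(n-p)/(p-1)}$, whose level sets are rescaled Wulff shapes and whose leading coefficient is determined by $\mathrm{Cap}_{F,p}(\Omega)$.

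The heart of the argument is the construction of a monotone quantity along the regular level sets of $u$. Following \cite{AFM1}, and working directly on the Euclidean/anisotropic structure as suggested in the Introduction, I would introduce a quantity of the form
\begin{equation*}
\Phi_p(t)=t^{-\alpha}\int_{\{u=t\}}F^{p-1}(\nabla u)\, H_F^{u}\,d\sigma,\qquad \alpha=\frac{(p-1)(n-p-1)}{n-p},
\end{equation*}
where $H_F^{u}$ is the anisotropic mean curvature of $\{u=t\}$ (computed with respect to the anisotropic unit normal $\nabla F(\nabla u)$) and $\alpha$ is forced by the scaling of $\Phi_p$ on Wulff balls. The key claim is that $\Phi_p$ is non-decreasing on $t\in(0,1)$; its proof should reduce to an anisotropic Bochner/$P$-function identity for $F^p(\nabla u)$, combined with the divergence-form structure of the anisotropic $p$-Laplacian and the Cauchy--Schwarz-type bound $(H_F^u)^2\le (n-1)\|B_F^u\|^2$ applied to the anisotropic second fundamental form of each level set.

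The inequality \eqref{eqn:1.02} then arises from the endpoint analysis. As $t\to 1^-$, the level set approaches $\partial\Omega$; using the equation to express $F^{p-1}(\nabla u)$ in terms of the boundary data and H\"older's inequality to produce the integrand $|H_F/(n-1)|^p F(\nu)$, one recovers the left-hand side of \eqref{eqn:1.02} (up to the explicit constant $\kappa_{n-1}$). As $t\to 0^+$, the asymptotics above reduce $\Phi_p(t)$ to an integral on a large rescaled Wulff shape, where an explicit computation yields the right-hand side of \eqref{eqn:1.02} with the correct dimensional factor $[(p-1)/(n-p)]^{p-1}$. Combining these with $\Phi_p(0^+)\le \Phi_p(1^-)$ gives the desired inequality.

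The main obstacle, as in \cite{AFM1}, is the possible presence of critical points of $u$, which a priori invalidate both the level-set geometry and the $P$-function computation. I would handle this via (i) coarea and Sard-type arguments to restrict integration to a.e.\ regular level set, and (ii) an approximation scheme, perturbing $\Omega$ to a family of smooth strictly $F$-star-shaped sets (whose capacitary potentials are critical-point free by an anisotropic Hopf-type argument), establishing \eqref{eqn:1.02} for these and passing to the limit. Rigidity in \eqref{eqn:1.02} is inherited from rigidity in the monotonicity formula: equality throughout forces every level set of $u$ to be anisotropically umbilical, hence a Wulff shape, so $\Omega$ itself must be a Wulff ball.
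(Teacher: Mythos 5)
Your overall strategy (capacitary potential, asymptotics modelled on the anisotropic fundamental solution, a monotone quantity along level sets, Hölder at the boundary) is the same family of argument as the paper, but as written the proof has concrete gaps. First, the monotone quantity itself. On the model potential of a Wulff ball, $u=(F^o(x)/R)^{-\frac{n-p}{p-1}}$, one computes $\int_{\{u=t\}}F^{p-1}(\nabla u)\,H_F\,F(\nu)\,d\sigma=c(n,p)\,\kappa_{n-1}R^{\,n-p-1}\,t^{\frac{p-1}{n-p}}$, so the scale-invariant normalization is $\alpha=\frac{p-1}{n-p}$, not $\frac{(p-1)(n-p-1)}{n-p}$; with your $\alpha$ the claimed monotonicity already fails on Wulff balls (except when $p=n-2$), and in any case a quantity that is not constant on the model cannot give a sharp inequality. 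More seriously, even after correcting $\alpha$, a single monotone quantity with integrand $F^{p-1}(\nabla u)H_F$ does not close the argument: monotonicity bounds $\int_{\partial\Omega}F^{p-1}(\nabla u)H_F\,F(\nu)\,d\sigma$ from below by $c\,\kappa_{n-1}^{\frac1{n-p}}\,{\rm Cap}_{F,p}(\Omega)^{\frac{n-p-1}{n-p}}$, and Hölder then yields the factor $\bigl(\int_{\partial\Omega}F^{p}(\nabla u)F(\nu)\,d\sigma\bigr)^{\frac{p-1}{p}}$ alongside $\bigl(\int_{\partial\Omega}|H_F|^{p}F(\nu)\,d\sigma\bigr)^{\frac1p}$. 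The term $\int_{\partial\Omega}F^{p}(\nabla u)F(\nu)\,d\sigma$ is not the capacity and is not ``boundary data'': $F(\nabla u)|_{\partial\Omega}$ is the unknown normal derivative, so ``using the equation to express $F^{p-1}(\nabla u)$ in terms of the boundary data'' is not available. The paper needs a second, independent inequality exactly at this point, namely $\Phi_{p,q}'(1)\le 0$ for the gradient-integral $\Phi_{p,q}$ in \eqref{eqn:4.1}, which gives $p^*\int_{\partial\Omega}F^{q(p-1)}F(\nu)\,d\sigma\le\int_{\partial\Omega}F^{q(p-1)-1}H_F\,F(\nu)\,d\sigma$ via the divergence identity of Lemma \ref{lem:4.2}; your curvature-weighted integral appears in the paper only as (part of) the derivative of that quantity, and without an analogue of this estimate your chain of inequalities cannot reach \eqref{eqn:1.02}.

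Second, the treatment of critical points is not adequate. Sard plus coarea only tells you that a.e.\ level is regular; it does not give the comparison of the quantity across critical values, because the divergence theorem must be applied on regions that may contain ${\rm Crit}(u)$ — this is precisely where the paper inserts cut-off vector fields and verifies that the extra terms ($\Theta$ in Section \ref{sect:4} and $\Psi$ in Section \ref{sect:5}) have a favorable sign. Your proposed alternative, exterior approximation of $\Omega$ by strictly $F$-star-shaped domains with critical-point-free potentials, is not a valid reduction: the left-hand side of \eqref{eqn:1.02} is a boundary curvature integral, which is neither continuous nor monotone under such outer approximations (the approximants' curvature integrals bear no relation to that of $\partial\Omega$), and moreover star-shapedness is not known to rule out critical points of the anisotropic $p$-capacitary potential. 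Finally, for rigidity it suffices (as in the paper) that equality forces the traceless anisotropic second fundamental form to vanish on $\partial\Omega$ alone, via ${\rm div}X=0$ there; you do not need, and your scheme would not easily deliver, umbilicity of every level set.
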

As a direct consequence (with $p=n-1$), we get the {\it anisotropic Willmore inequality}.
\begin{theorem}
\label{willmore}
Let {$\Omega\subset\mathbb R^n(n\geq3)$} be a bounded domain with smooth
boundary. Then
\begin{equation}\label{willmore'}
\int\limits_{\partial\Omega}\Big|\frac {H_F}{n-1}\Big|^{n-1}F(\nu)d\sigma\ge \kappa_{n-1}.
\end{equation}
Moreover, equality holds in \eqref{willmore'} if and only if $\Omega$ is a Wulff  ball.
\end{theorem}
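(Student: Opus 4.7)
The plan is to deduce Theorem \ref{willmore} as an immediate specialization of Theorem \ref{thm:1.1} at the distinguished exponent $p=n-1$. This value is admissible in the range $1<p<n$ because the standing assumption $n\ge 3$ gives $1<n-1<n$. The crucial algebraic observation is that the exponent governing the right-hand side of \eqref{eqn:1.02} becomes
$$\frac{n-p-1}{n-p}\bigg|_{p=n-1}=\frac{0}{1}=0,$$
so the entire bracketed quantity, which encodes the anisotropic $(n-1)$-capacity and the constant $((p-1)/(n-p))^{p-1}$, is raised to the zero power and collapses to $1$. Consequently \eqref{eqn:1.02} at $p=n-1$ reduces to
$$\frac{1}{\kappa_{n-1}}\int_{\partial\Omega}\Big|\frac{H_F}{n-1}\Big|^{n-1}F(\nu)\,d\sigma\ge 1,$$
which, after multiplying both sides by $\kappa_{n-1}$, is exactly \eqref{willmore'}.

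The rigidity statement requires no independent argument. Theorem \ref{thm:1.1} asserts that equality in \eqref{eqn:1.02} characterizes Wulff balls for every admissible $p$, and in particular for $p=n-1$. Since at this value \eqref{eqn:1.02} and \eqref{willmore'} are literally the same inequality up to clearing the normalization $\kappa_{n-1}$, the ``if and only if'' for Wulff balls is inherited verbatim from Theorem \ref{thm:1.1}.

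The main obstacle, if any, has already been absorbed into the proof of Theorem \ref{thm:1.1}, so no genuine work remains. The role of $p=n-1$ is precisely that this is the exponent at which the capacitary contribution on the right-hand side becomes scale-invariant to the point of being constant, yielding a pointwise-sharp Willmore-type bound independent of the geometry of $\Omega$ beyond its smoothness. One could also imagine a self-contained derivation by running the capacitary monotonicity machinery of \cite{AFM1} directly at $p=n-1$ and tracking only the limit at infinity, but invoking Theorem \ref{thm:1.1} as a black box is strictly shorter and avoids duplicating the analysis of the anisotropic $p$-capacitary potential.
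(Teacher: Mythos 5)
Your proposal is correct and is essentially the paper's own argument: the paper obtains Theorem \ref{willmore} precisely as the specialization of the $L^p$ anisotropic Minkowski inequality at the exponent where the capacitary power vanishes (stated in the introduction as ``with $p=n-1$'' in \eqref{eqn:1.02}, and carried out in Section \ref{sect:5} by taking $q(p-1)=n-1$ in \eqref{eqn:5.11}, which at $p=n-1$ is the same choice), so the right-hand side collapses to $\kappa_{n-1}$ exactly as you compute. The rigidity is likewise inherited verbatim, since at $p=n-1$ the two inequalities coincide and $p=n-1\in(1,n)$ is admissible because $n\ge 3$.
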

Theorem \ref{thm:1.1} are a special case of  Theorem \ref{thm-pq}. 
Consider the anisotropic $p$-capacitary potential $u$, which satisfies
\begin{equation*}
\left\{
\begin{aligned}
\Delta_{F, p}u=&0\ \ {\rm in}\ \ \mathbb R^n\setminus\bar{\Omega}\\
u=&1\ \ {\rm on} \ \ \partial\Omega\\
u(x)\rightarrow& 0\ \ {\rm as}\ \ |x|\rightarrow\infty,
\end{aligned}\right.
\end{equation*}
where  $\Delta_{F, p}$ is the anisotropic $p$-Laplace operator (see Section \ref{sect:2}). It is not hard to see that
$${\rm Cap}_{F, p}(\Omega)=\int_{\mathbb R^n\setminus\bar{\Omega}}F^p(\n u) dx= \int_{\partial{\Omega}}F^{p-1}(\n u)F(\nu)d\sigma.$$
Let $1<p<n$ and $q\ge 1+\frac{1}{p^*}$, where $p^*:=\frac{(n-1)(p-1)}{n-p}$. Let $\Phi_{p,q}: [1,\infty)\to \rr$ be defined by
\begin{equation*}
\Phi_{p,q}(\tau):=\tau^{(q-1)p^*}\int\limits_{\{u=1/\tau\}}F^{q(p-1)}(\nabla u)F(\nu)d\sigma.
\end{equation*}
The asymptotic behavior of $\Phi_{p,q}$ corresponds to ${\rm Cap}_{F, p}(\Omega)$ and the key ingredient to prove Theorem \ref{thm-pq} is that $\Phi_{p,q}$ is an (essentially) monotone non-increasing function. See Sections \ref{sect:4} and \ref{sect:5}.

Next by letting $p\to 1$ in \eqref{eqn:1.02}, we get the following extended anisotropic Minkowski Inequality.
\begin{theorem}[]\label{thm:1.2}
If {$\Omega\subset\mathbb R^n(n\geq3)$} is a bounded domain with smooth boundary, then
\begin{equation}\label{eqn:1.03}
\frac1{\k_{n-1}}\int\limits_{\partial\Omega}\Big|\frac{H_F}{n-1}\Big|F(\nu)
d\sigma \ge \left(\frac1{\k_{n-1}}\inf_{\substack{\O\subset U\\ \p U \hbox{smooth}}}|\p U|_F\right)^{\frac{n-2}{n-1}}.
\end{equation}
%
Moreover, equality in \eqref{eqn:1.03} is achieved by Wulff balls.
\end{theorem}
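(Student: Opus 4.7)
The plan is to derive \eqref{eqn:1.03} by sending $p \to 1^+$ in the inequality \eqref{eqn:1.02} of Theorem \ref{thm:1.1}. Writing $\alpha_p := (p-1)^{p-1} {\rm Cap}_{F,p}(\Omega)$, and using that $(n-p)^{p-1} \to 1$ together with the exponent $\frac{n-p-1}{n-p} \to \frac{n-2}{n-1}$, Lebesgue's dominated convergence (applied to the bounded integrand $|H_F/(n-1)|^p F(\nu)$ on the smooth compact surface $\partial\Omega$) shows that the LHS of \eqref{eqn:1.02} converges to that of \eqref{eqn:1.03}. The entire argument therefore reduces to establishing
\[
\lim_{p \to 1^+} \alpha_p \;=\; A_* \;:=\; \inf_{\substack{\Omega \subset U\\ \partial U\ \text{smooth}}} |\partial U|_F,
\]
after which passing to the limit in \eqref{eqn:1.02} yields \eqref{eqn:1.03}.

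For the upper bound $\limsup \alpha_p \leq A_*$, I would fix any smooth $U \supset \bar\Omega$ and use a test function linear in the anisotropic distance. Let $d(x)$ denote the anisotropic distance adapted so that $\{d < t\} = U + t\mathcal{W}_F$. For small $\delta > 0$, the Lipschitz function $u_\delta(x) := \max\{0,\, 1 - d(x)/\delta\}$ equals $1$ on $U \supset \bar\Omega$ and is compactly supported in $U + \delta\mathcal{W}_F$, so it is admissible. The anisotropic Steiner formula $|(U + \delta\mathcal{W}_F) \setminus U| = \delta|\partial U|_F + O(\delta^2)$ gives
\[
{\rm Cap}_{F,p}(\Omega) \;\leq\; \int_{\mathbb{R}^n} F^p(\nabla u_\delta)\, dx \;=\; \delta^{1-p}|\partial U|_F + O(\delta^{2-p}).
\]
Choosing $\delta = p-1$ converts this into $\alpha_p \leq |\partial U|_F + O(p-1)$; sending $p \to 1^+$ and then taking the infimum over $U$ delivers the claim.

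The lower bound $\liminf \alpha_p \geq A_*$ is the main technical obstacle. I would exploit the anisotropic $p$-capacitary potential $u_p$: for a.e. $t \in (0,1)$, Sard's theorem guarantees that the super-level set $V_t := \{u_p > t\} \cup \Omega$ is a smooth open set containing $\bar\Omega$, so $|\partial V_t|_F \geq A_*$. Integration by parts using $\Delta_{F,p} u_p = 0$ and the $1$-homogeneity of $F$ produce the level-set-independent flux identity
\[
\int_{\{u_p = t\}} \frac{F^p(\nabla u_p)}{|\nabla u_p|}\, d\mathcal{H}^{n-1} \;=\; {\rm Cap}_{F,p}(\Omega), \qquad t \in (0,1).
\]
Hölder's inequality applied to the integrand representing $|\partial V_t|_F$ then yields $|\partial V_t|_F^p \leq C\,{\rm Cap}_{F,p}(\Omega)\cdot \phi_p(t)^{p-1}$, where $\phi_p(t) := \int_{\{u_p = t\}}|\nabla u_p|^{-1}\, d\mathcal{H}^{n-1} = -\frac{d}{dt}|V_t|$. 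The crux is to locate a regular value $t_p \in (0,1)$ with $\phi_p(t_p) \leq C(p-1)$ as $p \to 1^+$; this is done by a mean-value argument combined with the asymptotic structure of the $p$-capacitary potentials (the transformation $w_p := -(p-1)\log u_p$ is expected to converge, in the appropriate sense, to the weak solution of the inverse anisotropic mean curvature flow studied in \cite{DGX}). With such $t_p$ in hand one obtains $\alpha_p \geq A_*^p\bigl((p-1)/\phi_p(t_p)\bigr)^{p-1} \to A_*$.

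Finally, equality for the Wulff ball $\Omega = r\mathcal{W}_F$ follows by direct computation: $H_F \equiv (n-1)/r$ on $\partial(r\mathcal{W}_F)$ and $|\partial(r\mathcal{W}_F)|_F = \kappa_{n-1} r^{n-1}$, so the LHS of \eqref{eqn:1.03} equals $r^{n-2}$; the Wulff inequality \eqref{wulff-ineq} forces $A_* = \kappa_{n-1} r^{n-1}$, yielding the same value for the RHS.
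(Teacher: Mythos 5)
Your overall strategy --- pass to the limit $p\to1^+$ in \eqref{eqn:1.02} after identifying $\lim_{p\to1^+}{\rm Cap}_{F,p}(\Omega)$ with $A_*:=\inf_{\Omega\subset U,\ \partial U \ \text{smooth}}|\partial U|_F$ --- is exactly the paper's (Theorem \ref{thm:1.1} combined with Proposition \ref{thm:6.2}), and your treatment of the left-hand side, of the upper bound $\limsup_{p\to1^+}{\rm Cap}_{F,p}(\Omega)\le A_*$, and of the Wulff-ball equality case is sound. The genuine gap is the lower bound $\liminf_{p\to1^+}{\rm Cap}_{F,p}(\Omega)\ge A_*$, which you yourself call the main obstacle but leave as a sketch resting on two unestablished points. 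First, the claimed existence of regular values $t_p$ with $\phi_p(t_p)\le C(p-1)$: the mean-value argument you invoke controls $\phi_p$ only through $\int_a^b\phi_p(t)\,dt=|\{a<u_p<b\}|$, and this volume does not shrink like $p-1$ in general --- heuristically it tends to $|\Omega^*\setminus\Omega|$, which is positive whenever $\Omega$ is not outward $F$-minimising --- so one can only expect $\phi_p(t_p)\le C$. Second, the ``asymptotic structure'' you appeal to, namely convergence of $w_p=-(p-1)\log u_p$ to the weak inverse anisotropic mean curvature flow of \cite{DGX}, is precisely the machinery that is unavailable in the anisotropic setting and that the paper is designed to avoid (its introduction notes that the approximation scheme of \cite{DGX} does not suffice); you do not prove it. Note also that your own H\"older estimate shows the requirement is far weaker than you state: since the exponent is $p-1$, it suffices that $(p-1)\log\phi_p(t_p)\to0$, e.g.\ that $\phi_p(t_p)$ stay bounded, and boundedness of $|\{1/2<u_p<1\}|$ does follow by comparing $u_p$ with the explicit potential of a large Wulff ball containing $\Omega$. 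With that recalibration (and some care in producing regular level values) your level-set route could plausibly be repaired, but as written the decisive step is both unsupported and pegged to a bound that fails in general.

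For comparison, the paper's identification of the capacity limit (Proposition \ref{thm:6.2}) is elementary and avoids the capacitary level sets entirely: Step 1 uses the coarea formula to get ${\rm Cap}_{F,1}(\Omega)\ge A_*$; Step 2 --- the analogue of your problematic lower bound --- tests with $u^q$, $q=p(n-1)/(n-p)$, and combines H\"older with the sharp anisotropic Sobolev inequality of \cite{AFTL}, whose constant has the explicit limit $\lim_{p\to1^+}T_{n,p}=n^{\frac1n-1}\omega_{n-1}^{-\frac1n}$, to conclude ${\rm Cap}_{F,1}(\Omega)\le\liminf_{p\to1^+}{\rm Cap}_{F,p}(\Omega)$; Step 3 is an upper bound via a cutoff of the distance to $U$, essentially your $u_\delta$. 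If you want a complete proof along the limiting strategy, the sharp-Sobolev step is the piece your proposal is missing.
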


As consequences, we are able to deduce a sharp anisotropic Minkowski inequality for outward $F$-minimising sets (see Definition \ref{ofm}) and a sharp volumetric anisotropic Minkowski inequality.
\begin{corollary}
\label{cor:1.3}
If {$\Omega\subset\mathbb R^n(n\geq3)$} is a bounded outward $F$-minimising set with smooth boundary, then
\begin{equation}\label{eqn:1.04}
\frac1{\k_{n-1}}\int\limits_{\partial\Omega}\frac{H_F}{n-1} F(\nu)
d\sigma \ge \left(\frac{|\p \O|_F}{\k_{n-1}}\right)^{\frac{n-2}{n-1}}.
\end{equation}
Moreover, equality in \eqref{eqn:1.04} is achieved by Wulff balls. 
%
Conversely, if the equality holds in \eqref{eqn:1.04} for some
bounded strictly outward $F$-minimising open set with smooth and strictly $F$-mean-convex boundary,
then $\Omega$ is a  Wulff ball.
\end{corollary}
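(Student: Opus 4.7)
The plan is to deduce Corollary \ref{cor:1.3} from Theorem \ref{thm:1.2} through two simple specializations of \eqref{eqn:1.03} to the outward $F$-minimising setting. First, by the very definition of outward $F$-minimising, $|\partial\Omega|_F\le|\partial U|_F$ for every smooth $U\supset\Omega$, so the infimum on the right-hand side of \eqref{eqn:1.03} is realized at $U=\Omega$ and equals $|\partial\Omega|_F$. Second, any smooth outward $F$-minimising set satisfies $H_F\ge 0$ on $\partial\Omega$: given $\phi\in C^\infty(\partial\Omega)$ with $\phi\ge 0$, the normal perturbation $\Omega_t$ that pushes $\partial\Omega$ outward at speed $\phi$ is an admissible competitor for every small $t\ge 0$, so the first variation formula
$$\frac{d}{dt}\bigg|_{t=0^+}|\partial\Omega_t|_F=\int_{\partial\Omega}H_F\,\phi\, d\sigma\ge 0$$
together with the arbitrariness of $\phi\ge 0$ forces $H_F\ge 0$ pointwise. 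Consequently $|H_F|\,F(\nu)=H_F\,F(\nu)$ on $\partial\Omega$, and \eqref{eqn:1.03} specializes exactly to \eqref{eqn:1.04}. A direct computation on the unit Wulff ball, using $H_F\equiv n-1$ and $|\partial\mathcal{W}_F|_F=\kappa_{n-1}$, confirms that Wulff balls saturate the inequality.

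For the rigidity, suppose $\Omega$ is strictly outward $F$-minimising with smooth and strictly $F$-mean-convex boundary and that equality holds in \eqref{eqn:1.04}. Strict outward minimality forces the infimum in \eqref{eqn:1.03} to be uniquely attained at $\Omega$, so equality propagates to \eqref{eqn:1.03} as well. To conclude that $\Omega$ is a Wulff ball, the plan is to revisit the limit $p\to 1^+$ through which Theorem \ref{thm:1.2} is derived from Theorem \ref{thm:1.1}: working with the anisotropic $p$-capacitary potentials $u_p$ of $\Omega$ for $p$ close to $1$ and exploiting the strict $F$-mean convexity to obtain uniform gradient control of $u_p$ near $\partial\Omega$, the limiting equality in \eqref{eqn:1.03} should upgrade to equality in the monotonicity of the functional $\Phi_{p,q}$ for some admissible $q$. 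The rigidity part of Theorem \ref{thm:1.1} will then force the non-critical level sets of $u_p$ --- in particular $\partial\Omega$ --- to be Wulff spheres.

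The main obstacle is precisely this $p\to 1^+$ passage, which is singular since it corresponds to the weak inverse anisotropic mean curvature flow, and hence requires careful uniform estimates near $\partial\Omega$ together with continuity of the relevant functionals at $p=1$ in order to preserve the equality condition through the limit. Strict $F$-mean convexity is used exactly to prevent the gradient of $u_p$ from degenerating at $\partial\Omega$ as $p\to 1^+$, so that the equality case in the monotonicity of $\Phi_{p,q}$ survives the limiting process and couples with the rigidity of Theorem \ref{thm:1.1}.
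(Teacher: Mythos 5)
The first half of your argument is correct and is exactly the paper's route: the identification of the infimum in \eqref{eqn:1.03} with $|\partial\Omega|_F$ and the first-variation argument giving $H_F\ge 0$ are precisely Propositions \ref{prop:6.1} and \ref{prop:6.2}, and combining them with Theorem \ref{thm:1.2} yields \eqref{eqn:1.04} and the equality for Wulff balls.

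The rigidity part, however, has a genuine gap. Your plan is to send $p\to 1^+$ once more and ``upgrade'' the limiting equality in \eqref{eqn:1.04} to equality in the monotonicity of $\Phi_{p,q}$ for some fixed admissible $(p,q)$, so as to invoke the rigidity statement of Theorem \ref{thm:1.1}. But equality in the $p\to 1^+$ limit does not force equality at any fixed $p>1$: for each fixed $p$ the inequality \eqref{eqn:1.02} for $\Omega$ may be strict, with the deficit merely tending to zero as $p\to 1^+$, and the equality analysis of Theorem \ref{thm:1.1} (which needs ${\rm div}X=0$ along $\partial\Omega$, hence anisotropic umbilicity, for that fixed $p$) is then unavailable. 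Uniform gradient bounds on the potentials $u_p$ near $\partial\Omega$ do not by themselves bridge this; one would need a quantitative stability statement for \eqref{eqn:1.02} uniform in $p$, which you neither state nor prove, and which is not in the paper. You have correctly identified this passage as ``the main obstacle,'' but the proposal leaves it unresolved, so the converse statement is not proved.

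The paper's proof of the rigidity avoids the capacitary potentials altogether. Under the hypotheses (strictly outward $F$-minimising, smooth, strictly $F$-mean-convex boundary), it runs the smooth inverse anisotropic mean curvature flow $\{\partial\Omega_t\}$ for a short time; by \cite{DGX} (Proposition 3.4, cf.\ Lemma 2.4 of \cite{HI}) each $\Omega_t$ remains strictly outward $F$-minimising and strictly $F$-mean-convex, so the already-established inequality \eqref{eqn:1.04} applies to every $\Omega_t$. One then considers the scale-invariant quantity
\begin{equation*}
\Psi(t)=|\partial\Omega_t|_F^{-\frac{n-2}{n-1}}\int_{\partial\Omega_t}H_F\,F(\nu)\,d\sigma,
\end{equation*}
whose first variation under the flow satisfies $\Psi'(0)\le 0$ with the deficit controlled by the traceless anisotropic second fundamental form. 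Equality in \eqref{eqn:1.04} at $t=0$ means $\Psi(0)$ is extremal, so $\Psi'(0)<0$ would push $\Psi(t)$ strictly below the extremal value for small $t>0$, contradicting \eqref{eqn:1.04} applied to $\Omega_t$. Hence $\Psi'(0)=0$, $\partial\Omega$ is anisotropically umbilical, and therefore a Wulff shape. If you want to complete your write-up, you should either adopt this flow argument or supply the missing uniform-in-$p$ stability mechanism; as it stands, the converse direction is only a program, not a proof.
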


\begin{corollary}
\label{cor:1.4}
Let {$\Omega\subset\mathbb R^n(n\geq3)$} be a bounded set with smooth boundary. Then
\begin{equation}\label{eqn:1.05}
\frac1{\k_{n-1}}\int\limits_{\partial\Omega}\Big|\frac{H_F}{n-1}\Big|F(\nu)
d\sigma \ge \left(\frac{n|\O|}{\k_{n-1}}\right)^{\frac{n-2}{n}}.
\end{equation}
Moreover, equality holds  in \eqref{eqn:1.05} if and only if $\O$ is a Wulff ball. \end{corollary}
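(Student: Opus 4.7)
The plan is to derive Corollary \ref{cor:1.4} as a direct consequence of Theorem \ref{thm:1.2} combined with the Wulff (anisotropic isoperimetric) inequality \eqref{wulff-ineq}. First I would invoke Theorem \ref{thm:1.2} to bound the left-hand side of \eqref{eqn:1.05} from below by $\bigl(\kappa_{n-1}^{-1}\inf_{U\supset\Omega}|\partial U|_F\bigr)^{(n-2)/(n-1)}$, where the infimum ranges over bounded open sets $U$ with smooth boundary containing $\Omega$.

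Next I would dispose of the infimum from below using \eqref{wulff-ineq}: for every admissible $U$ one has $|\partial U|_F\ge n|\mathcal{W}_F|^{1/n}|U|^{(n-1)/n}\ge n|\mathcal{W}_F|^{1/n}|\Omega|^{(n-1)/n}$, the last step using $|U|\ge|\Omega|$. Since this lower bound is independent of $U$, it is preserved under taking the infimum. Substituting back into the bound from Theorem \ref{thm:1.2} and using the identity $\kappa_{n-1}=n|\mathcal{W}_F|$, the exponents $(n-1)/n$ and $(n-2)/(n-1)$ multiply to $(n-2)/n$, producing exactly \eqref{eqn:1.05}.

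For the equality case, direct evaluation on a Wulff ball shows that both the Wulff step and Theorem \ref{thm:1.2} are simultaneously saturated, so \eqref{eqn:1.05} is sharp. For the converse, equality in \eqref{eqn:1.05} forces the Wulff step to be saturated, namely $\inf_U|\partial U|_F=n|\mathcal{W}_F|^{1/n}|\Omega|^{(n-1)/n}$. A minimizing sequence $U_k\supset\Omega$ then satisfies $|U_k|\to|\Omega|$ and asymptotically saturates the Wulff inequality, so by the rigidity (and, if needed, the quantitative stability) of the anisotropic isoperimetric inequality the $U_k$ approach a Wulff ball of volume $|\Omega|$; since $\Omega\subset U_k$ and $|U_k|\to|\Omega|$, $\Omega$ itself must coincide with that Wulff ball.

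The main subtlety I anticipate is precisely this rigidity step. Since Theorem \ref{thm:1.2} is stated only in the form ``equality is achieved by Wulff balls'' rather than as an ``if and only if,'' the ``only if'' direction of \eqref{eqn:1.05} cannot be inherited from it directly and must instead be extracted from the equality case of the Wulff inequality through the approximation argument sketched above. Everything else reduces to bookkeeping with exponents.
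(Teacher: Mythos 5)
Your argument is essentially the paper's own proof: Theorem \ref{thm:1.2} combined with the Wulff inequality \eqref{wulff-ineq} applied to each competitor $U\supset\Omega$ together with $|U|\ge|\Omega|$, followed by the same exponent bookkeeping with $\kappa_{n-1}=n|\mathcal{W}_F|$. The paper settles the equality case in one line (``it follows from that of the Wulff inequality''), and your minimizing-sequence/compactness elaboration of the ``only if'' direction (correctly noting that Theorem \ref{thm:1.2} gives only ``achieved by Wulff balls'') is just a more explicit rendering of that same step, not a different route.
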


To end the introduction, we make some remark on the {\it strictly outward $F$-minimising hull}. Let $\O$ be a bounded domain with smooth boundary.
It is well-known that  the infimum
 $$\inf\limits_{\substack{\O\subset U\\ \p U \hbox{smooth}}}|\p U|_F$$ may not be achieved in general. In the setting of set of finite perimeter, the infimum
\begin{eqnarray}\label{inf-sfp}
\inf_{\substack{\O\subset E \\ \ E \hbox{ set of perimeter}}}P_F(E)
\end{eqnarray}
can be achieved, by the direct method in Calculus of Variations. Here
$$P_F(E)=\int_{\p^*E} F(\nu_E)d\mathcal{H}^{n-1}$$ is the {\it anisotropic perimeter} of $E$,
where $\p^*E$ is the reduced boundary of $E$ and $\nu_E$ is the outward measure theoretical unit normal to $E$, see e.g. \cite[Chapter 20]{Maggi}.
There is a so-called strictly outward $F$-minimising hull corresponding to $\O$ which attains the infimum, see  \cite{AFM1,FM,HI}. Similarly as in \cite[Definition 2.12]{FM},  the strictly outward $F$-minimising hull of $\Omega$ denoted by $\Omega^*$, can be reformulated as
the measure theoretic interior Int$(E)$ of any set $E$ that solves the minimisation problem
\begin{equation*}\label{eqn:6.2}
|E|=\inf\limits_{G\in {\rm SOFMBE(\Omega)}}|G|
\end{equation*}
where
\begin{equation*}
  {\rm SOFMBE}(\Omega)=\{G\,|\, \Omega\subseteq G\ \hbox{ and }\ G \hbox{ is bounded and strictly outward} \ F-{\rm minimising }\}
\end{equation*}
is the Strictly Outward $F$-Minimizing Bounded Envelopes of $\O$.
It turns out that $\O^*$ is the unique  maximal volume solution to the least anisotropic perimeter problem with obstacle $\O$, up to zero-measure modification, see \cite[Theorem 2.16]{FM} (In our case, the assumption on the existence of an exhaustion of $\rr^n$ by a sequence of strictly outward $F$-minimising sets is automatically satisfied. In fact, such exhaustion is given by the Wulff balls). In particular, $\O^*$ attains the infimum in \eqref{inf-sfp}, namely,
%
\begin{equation*}\label{eqn:6.3}
P_F(\Omega^*)=\inf_{\substack{\O\subset E \\ \ E \hbox{  set of perimeter}}}P_F(E).
\end{equation*}
It is direct to see that
\begin{equation*}\label{eqn:6.3'}
\inf_{\substack{\O\subset U\\ \p U \hbox{smooth}}}|\p U|_F\ge \inf_{\substack{\O\subset E \\ \ E \hbox{  set of perimeter}}}P_F(E)=P_F(\Omega^*).
\end{equation*}
Thus it follows from \eqref{eqn:1.03} that it holds
\begin{equation}\label{mink-last}
\frac1{\k_{n-1}}\int\limits_{\partial\Omega}\Big|\frac{H_F}{n-1}\Big|F(\nu)
d\sigma \ge \left(\frac{P_F(\O^*)}{\k_{n-1}}\right)^{\frac{n-2}{n-1}}.
\end{equation}
\begin{remark} In \cite{AFM1}, the isotropic version of \eqref{mink-last} has been shown. In their argument, a regularity result in \cite{SZW} and an exterior smooth approximation in \cite{Schm} for the strictly outward minimising hull $\O^*$ have to play a role. In this paper, by showing a stronger inequality \eqref{eqn:1.03}, we can avoid of using the regularity and an exterior smooth approximation for the anisotropic case, which seems unavailable in the reference.
\end{remark}

\medskip

The rest of the paper is organized as follows. In Section \ref{sect:2} we review the basic concepts in anisotropic setting, including  the anisotropic mean curvature for level sets and the anisotropic $p$-laplace operator. In particular, we prove a Kato-type identity for anisotropic $p$-harmonic functions.
In Section \ref{sect:3} we make a systematic study of anisotropic $p$-capacitary potential, focusing on its asymptotic behavior. In Sections \ref{sect:4} and \ref{sect:5},  we establish the essential and effective monotonicity properties for the crucial function $\Phi_{p, q}(\tau)$, and in turn, prove the $L^p$ anisotropic Minkowski inequality, Theorem \ref{thm:1.1}. In Section \ref{sect:6}, we study the outward $F$-minimising sets, the limit of ${\rm Cap}_{F, p}$ as $p\to 1^+$ and finally prove Theorem \ref{thm:1.2} and Corollaries \ref{cor:1.3} and \ref{cor:1.4}.

\

\section{Preliminaries}\label{sect:2}
\subsection{Minkowski norm,  Wulff shape, anisotropic area}\

Let $F\in C^{\infty}(\R^{n}\setminus\{0\})$ be a {\it Minkowski norm} on $\R^n$, in the sense that
\begin{itemize}
\item[(i)] $F$ is a norm in $\mathbb{R}^{n}$, i.e.,  $F$ is a convex, even, $1$-homogeneous function satisfying $F(x)>0$ when $x\neq 0$ and $F(0)=0$;
\item[(ii)] $F$ satisfies a uniformly elliptic condition: $\n^2 (\frac12 F^2)$ is positive definite in $\mathbb{R}^{n}\setminus \{0\}$.
\end{itemize}

The {\it dual norm} $F^o\colon \Rn \rightarrow [0,+\infty[$
of $F$ is defined as
\begin{equation*}
F^o(x)=\sup_{\xi\ne 0} \frac{\langle \xi, x\rangle}{F(\xi)}.
\end{equation*}
$F^o$ is also a Minkowski norm on $\R^{n}$.

Furthermore,
\begin{equation*}
F(\xi)=\sup_{x\ne 0} \frac{\langle \xi, x\rangle}{F^o(x)}.
\end{equation*}
We remark that, throughout this paper,  we use conventionally $\xi$ as the variable for $F$ and $x$ as the variable for $F^o$.

Denote
\[
\mathcal W_F= \{  x\in  \Rn \colon F^o(x)< 1 \}.
\]
For the simplicity of notations, we will denote by $\mathcal W_F=\mathcal W$.
We call $\mathcal W$ the {\it unit Wulff ball} centered at the origin, and $\p \mathcal W$ the {\it Wulff shape}.

More generally, we denote $$\mathcal W_r(x_0)=r\mathcal W+x_0,$$ and call it the {\it Wulff ball of radius $r$ centered at $x_0$}. We simply denote $\mathcal W_r=\mathcal W_r(0)$.

The following properties of $F$ and $F^o$ hold true and will be frequently used in this paper (see e.g. \cite{DGX,WX4}).
\begin{proposition}\label{prop:2.1}
Let $F:\mathbb R^n\rightarrow[0,\infty)$ be a Minkowski norm. Then for any  $x, \xi \in \Rn\setminus \{0\}$, the
following holds:
\begin{enumerate}
\item  $\langle F_{\xi} (\xi) , \xi \rangle= F(\xi), \quad  \langle  F^{o}_x (x), x \rangle
= F^{o}(x).$
\item $ \sum_jF_{\xi_i\xi_j}(\xi)\xi_j=0$ for any $i=1,\ldots,n$.
\item $F(F_{x}^o(x) )=F^o( F_{\xi}(\xi))=1.$
\item $F^o(x)  F_{\xi}( F_x^o(x) ) =x, \quad F(\xi)
 F_{x}^o\left( F_{\xi}(\xi) \right) = \xi.$
\end{enumerate}
\end{proposition}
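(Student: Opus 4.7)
The plan is to derive all four statements from two inputs: the $1$-homogeneity of $F$ and $F^o$, and the sup-duality $F^o(x)=\sup_{\xi\neq 0}\langle\xi,x\rangle/F(\xi)$ (together with its symmetric counterpart already recorded just before the proposition).

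First I would dispatch (1) by Euler's identity: differentiating $F(t\xi)=tF(\xi)$ in $t$ at $t=1$ gives $\langle F_\xi(\xi),\xi\rangle=F(\xi)$, and the identity for $F^o$ is identical in structure. For (2) I would simply differentiate the identity $F(\xi)=\sum_j F_{\xi_j}(\xi)\xi_j$ with respect to $\xi_i$: the product rule produces $F_{\xi_i}(\xi)+\sum_j F_{\xi_i\xi_j}(\xi)\xi_j$ on the right, which must equal $F_{\xi_i}(\xi)$ on the left, so $\sum_j F_{\xi_i\xi_j}(\xi)\xi_j=0$. Equivalently one can observe that the $1$-homogeneity of $F$ forces $F_\xi$ to be $0$-homogeneous, and then differentiate $F_\xi(t\xi)=F_\xi(\xi)$ in $t$.

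For the harder duality parts (3) and (4) I would argue via the variational formula for $F^o$. Fix $x\neq 0$. Since $\xi\mapsto \langle\xi,x\rangle/F(\xi)$ is $0$-homogeneous, we can realize $F^o(x)$ as the maximum of $\langle\xi,x\rangle$ over the compact sphere $\{F(\xi)=1\}$; call a maximizer $\xi^{*}$. The Lagrange condition gives $x=\lambda\,F_\xi(\xi^{*})$ for some $\lambda$, and pairing with $\xi^{*}$ and using part (1) yields $\lambda=\langle\xi^{*},x\rangle=F^o(x)$. So
\begin{equation*}
x=F^o(x)\,F_\xi(\xi^{*}),\qquad F(\xi^{*})=1.
\end{equation*}
The envelope theorem (or a direct computation using that $\xi^{*}(x)$ stays on $\{F=1\}$ to kill the differentiated-constraint term) identifies $\xi^{*}=F^o_x(x)$. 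This yields simultaneously $F(F^o_x(x))=F(\xi^{*})=1$ (half of (3)) and $x=F^o(x)F_\xi(F^o_x(x))$ (half of (4)).

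Finally I would run the symmetric argument: using $F(\xi)=\sup_{x\neq 0}\langle\xi,x\rangle/F^o(x)$, the exact same Lagrange/envelope computation with the roles of $F$ and $F^o$ exchanged gives $F^o(F_\xi(\xi))=1$ and $\xi=F(\xi)F^o_x(F_\xi(\xi))$, completing (3) and (4). The main obstacle is the envelope-theorem step identifying the maximizer $\xi^{*}$ with $F^o_x(x)$; here one must either invoke smoothness of the maximizer (which follows from the uniform ellipticity hypothesis (ii), making the constrained maximizer unique and smooth in $x$) or bypass it by verifying directly that $\xi^{*}$ satisfies the defining relation of $F^o_x(x)$ via differentiating $F^o(x)=\langle\xi^{*}(x),x\rangle$ and using the constraint derivative. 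Everything else is straightforward homogeneity bookkeeping.
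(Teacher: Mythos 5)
Your proof is correct; note that the paper itself gives no argument for this proposition (it only cites \cite{DGX,WX4}), so there is no in-paper proof to match, and your write-up should be judged on its own. Parts (1) and (2) are exactly the standard Euler-identity bookkeeping and are fine. For (3) and (4) your Lagrange-multiplier/envelope route works, but it is the heaviest available path: it needs existence, uniqueness and smoothness (or at least Danskin-type differentiability) of the constrained maximizer $\xi^*(x)$, which you correctly trace back to the ellipticity hypothesis via the implicit function theorem applied to the Lagrange system. The argument usually given in the cited references avoids this machinery entirely: convexity, $1$-homogeneity and part (1) give the gradient inequality $\langle F_\xi(\xi),\eta\rangle\le F(\eta)$ for all $\eta$, hence $F^o(F_\xi(\xi))\le 1$, while choosing $\eta=\xi$ gives $F^o(F_\xi(\xi))\ge 1$; this proves (3), and symmetrically $F(F^o_x(x))=1$. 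For (4), since $\langle\xi,F_\xi(\xi)\rangle=F(\xi)=F(\xi)\,F^o(F_\xi(\xi))$, the smooth nonnegative function $x\mapsto F(\xi)F^o(x)-\langle\xi,x\rangle$ attains its minimum value $0$ at $x=F_\xi(\xi)$, so its gradient vanishes there, giving $F(\xi)\,F^o_x(F_\xi(\xi))=\xi$; exchanging the roles of $F$ and $F^o$ gives the other identity. So both routes are valid: the first-order-condition argument is shorter and needs no regularity of the maximizer, whereas your version carries that extra step—but since you flag it and justify it through the ellipticity assumption, there is no genuine gap.
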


Let $\Omega\subset\mathbb R^n$ be a bounded open set with smooth boundary $\p\O$ and $\nu$ be its unit outward normal of $\p\O$.  The {\it anisotropic area} $|\p \Omega|_F$ of $\O$ is defined by
\begin{equation}\label{eqn:2.002}
 |\p\Omega|_F=\int_{\partial\Omega}F(\nu)d\sigma.
\end{equation}

Note that when $\O=\mathcal{W}$, the unit Wulff ball, one can check by the divergence theorem that
\begin{equation}\label{eqn:2.2}
 |\p \mathcal{W}|_F=\int_{\p \mathcal{W}} \frac{1}{|\n F^o|}d\sigma=\int_{\mathcal{W}}{\rm div }(x) dx=n|\mathcal{W}|.
\end{equation}
For notation simplicity, we denote $$\kappa_{n-1}= |\p \mathcal{W}|_F=n|\mathcal{W}|.$$

\medskip

\subsection{Anisotropic $p$-Laplacian}\

Let $u$ be twice continuous differentiable at $x\in \rr^n$.
We denote by $F_i, F_{ij}, \ldots$  the partial derivatives of $F$ and by $u_i, u_{ij},\ldots$ the partial derivatives of $u$,
$$F_i=\frac{\p F}{\p \xi_i},  \ F_{ij}=\frac{\p^2 F}{\p \xi_i\p \xi_j}, \ u_i=\frac{\p u}{\p x_i}, \ u_{ij}=\frac{\p^2 u}{\p x_i\p x_j}.$$

For $x$ such that $\n u(x)\neq 0$, denote
\begin{equation}\label{eqn:2.1}
 \begin{aligned}
&& a_{ij}(\nabla u)(x):=&\frac{\partial^2}{\partial\xi_i\partial\xi_j}(\frac12F^2)(\nabla u(x))=(F_iF_j+FF_{ij})(\nabla u(x)),\\
&& a_{ij,p}(\nabla u)(x):=&\frac{\partial^2}{\partial\xi_i\partial\xi_j}(\frac1pF^p)(\nabla u(x))=F^{p-2}(a_{ij}+(p-2)F_iF_j)(\nabla u(x)).
 \end{aligned}
\end{equation}

The {\it anisotropic Laplacian and $p$-Laplacian} of $u$ at regular points ($|\nabla u|\neq 0$) is given by
\begin{equation}\label{eqn:2.001}
 \begin{aligned}
 && \Delta_F u:=&a_{ij}(\nabla u)u_{ij},\\
&& \Delta_{F,p} u:=&a_{ij,p}(\nabla u)u_{ij}=F^{p-2}(\Delta_F u+ (p-2)F_iF_ju_{ij}),
 \end{aligned}
\end{equation}
See e.g. \cite{CS, WX1}.
For notation simplicity, we also introduce $$V(\xi)=\frac{1}{p}F^p(\xi).$$

and \begin{eqnarray}\label{W-def}
W_{ij}=\p_{x_j}\left(\p_{\xi_i}V(\nabla u)\right)=a_{ik,p}(\n u)u_{kj}.
\end{eqnarray}

One sees easily that $${\rm tr}(W)=\Delta_{F,p} u.$$

Let $1 \leq k \leq n$, be an integer. For a $n$-vector $\lambda=(\lambda_1,\ldots,\lambda_n)$, the $k$-th elementary symmetric function on $\lambda$ is defined by
\begin{equation*}
  \sigma_k(\lambda)=\sum\limits_{1\leq i_1<\cdots<i_k\leq n}\lambda_{i_1}\cdots\lambda_{i_k}.
\end{equation*}
Given a real matrix $B=(b_{ij})\in\mathbb R^{n\times n}$, we denote by $S_k(B)$ the sum of all
the principal minors of $B$ of order $k$. In particular, $S_1(B)={\rm tr}(B)$, the trace of $B$, and
$S_n(B)={\rm det}(B)$, the determinant of $B$.
When the eigenvalues $\lambda_B$ of $B$ are all real, it is clear that
$$S_k(B)=\sigma_k(\lambda_B).$$ For more properties on $S_k(B)$, we refer to \cite{DGX1,Reilly}.

For our purpose, we consider $k=2$. By setting
\begin{equation*}
  S^{ij}_2(B)={\rm tr}B\, \delta_{ij}- b_{ji},
\end{equation*}
then
\begin{equation}\label{eqn:2.8}
S_2(B)=\frac12S^{ij}_2(B)b_{ij}=\frac12\big(({\rm tr}B)^2-{\rm tr}B^2\big).
\end{equation}
Of particular interest for our purpose is $S_2(W)$, where $W$ is given by \eqref{W-def}.

Since ${\rm tr}W=\Delta_{F, p}u$, it holds
\begin{equation}\label{eqn:2.9}
S^{ij}_2(W)=\Delta_{F, p}u\, \delta_{ij}- a_{jk,p}(\n u)u_{ki}.
\end{equation}
Moreover, in this setting $S^{ij}_2(W)$ is divergence free, see e.g.\cite{BC, CS, DGX1}
\begin{equation}\label{eqn:2.10}
\p_{x_j}S^{ij}_2(W)=0.
\end{equation}
It follows from \eqref{eqn:2.8} and \eqref{eqn:2.10} that if  $\Delta_{F, p}u=0$, then
\begin{equation}\label{eqn:2.11}
\p_{x_j}(S^{ij}_2(W)V_{\xi_i}(\nabla u))= 2S_2(W)=-a_{ik,p}a_{jl,p}u_{kj}u_{il}.
\end{equation}

\medskip

\subsection{Anisotropic curvature for level sets}\

We recall the concept of anisotropic curvature for a hypersurface in $\mathbb R^n$. See e.g. \cite{WX1, Xia}.

Let $M$ be a smooth embedded hypersurface in $\mathbb R^n$ and $\nu$ be one unit
normal of $M$. The corresponding anisotropic normal of $M$ is defined by
\begin{equation*}
\nu_F=\n F(\nu).
\end{equation*}
The anisotropic principal curvatures $\k_F=(\kappa_{1}^{F},\ldots,\kappa_{n-1}^{F})\in\R^{n-1}$ are  defined as  the eigenvalues of the map
\[
d\nu_{F}\colon T_{x}M \to T_{\nu_{F}(x)}\p \mathcal W.
\]
The mean curvature (with respect to $\nu$) is defined to be $$H_F=\s_1(\kappa_F)=\sum_i\kappa_{i}^{F}.$$ The second mean curvature (with respect to $\nu$) is defined to be
$$\s_2(\kappa_F)=\sum_{i<j} \kappa_{i}^{F}\kappa_j^F.$$

A well-known variational characterization for $H_F$ is that it arises from the first variation
of the anisotropic area functional $|\p \O|_F$, (see e.g. Proposition 2.1 in \cite{Xia}), namely,
\begin{equation*}
  \frac{d}{dt}\int_{\p \O_t}F(\nu)d\sigma=\int_{\p \O_t}H_F\langle Y,\nu\rangle d\sigma,
\end{equation*}
where $Y$ is  the variation vector filed for $\p\O_t$.

In this paper we are interested in the case when $M$ is given by a regular level set of a smooth
function $u$, that is $M=\{u=t\}$ for some regular value $t$. For our purpose, we choose the unit normal $\nu=-\frac{\nabla u}{|\nabla u|}$ and
\begin{equation*}
\nu_F=-F_{\xi}(\nabla u), \ \ H_F=-{\rm div}(F_{\xi}(\nabla u)).
\end{equation*}
In this case, we have that \begin{equation}\label{h-f}
H_F=-{\rm div}(F_{\xi}(\nabla u))=-F_{ij}u_{ij},
\end{equation}
Here $\rm div$ is the Euclidean divergence. See e.g. \cite{DGX1}.

From Theorem 2.5 in \cite{DGX1}, we also have that
$$\s_2(\kappa_F)=S_2(F_{ik}u_{kj}).$$
It follows that
\begin{equation}\label{umbilic}
\begin{aligned}
\frac{n-2}{n-1}S_1^2(F_{ik}u_{kj})-2S_2(F_{ik}u_{kj})=&\frac{n-2}{n-1}\s_1^2(\kappa_F)-2\s_2(\kappa_F).
\end{aligned}
\end{equation}
We remark that the right hand side of \eqref{umbilic} is exactly the square of traceless part of anisotropic second fundamental form, which is nonnegative.

Next we give two propositions to the connection between an anisotropic $p$-harmonic function $u$ and the anisotropic curvatures of regular level sets of $u$. The first one is a formula for the anisotropic mean curvature of regular level set of $u$.
\begin{proposition}\label{prop:2.2'}
Let $u$ satisfy $\Delta_{F, p}u=0$.  Then the anisotropic mean curvature of regular level set of $u$ is given by
%
\begin{equation}\label{h-f-2}
H_F=(p-1)F^{-1}F_iF_j u_{ij}.
\end{equation}
\end{proposition}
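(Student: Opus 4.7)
The plan is a direct algebraic manipulation: I would express $F_{ij}(\nabla u)\,u_{ij}$ in two different ways, using the formula for $H_F$ from \eqref{h-f} on one hand and the vanishing of $\Delta_{F,p}u$ on the other, and then equate them.

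First I would recall from \eqref{h-f} that, at any regular point of $u$ (taking $\nu=-\nabla u/|\nabla u|$),
\begin{equation*}
H_F=-F_{ij}(\nabla u)\,u_{ij}.
\end{equation*}
Then I would expand the anisotropic $p$-Laplacian using \eqref{eqn:2.1} and \eqref{eqn:2.001}. Since $\Delta_F u = a_{ij}(\nabla u)u_{ij} = (F_iF_j+FF_{ij})u_{ij}$, we have
\begin{equation*}
\Delta_{F,p}u = F^{p-2}\bigl(\Delta_F u + (p-2)F_iF_j u_{ij}\bigr) = F^{p-2}\bigl(FF_{ij}u_{ij}+(p-1)F_iF_ju_{ij}\bigr).
\end{equation*}

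Next, using the hypothesis $\Delta_{F,p}u=0$ and dividing by $F^{p-1}$ (which is strictly positive at points where $\nabla u\neq 0$), I would obtain
\begin{equation*}
F_{ij}(\nabla u)\,u_{ij} = -(p-1)F^{-1}F_iF_j u_{ij}.
\end{equation*}
Combining this with the first displayed equation immediately gives $H_F=(p-1)F^{-1}F_iF_j u_{ij}$, as claimed.

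There is really no hard step here; the only thing to be careful about is that the identity is stated pointwise on the regular set $\{\nabla u\neq 0\}$, so that both the $p$-Laplacian expression and the mean-curvature expression via \eqref{h-f} are meaningful, and one can freely divide by powers of $F(\nabla u)$. Since $F$ is $1$-homogeneous, $F_i$ is $0$-homogeneous and $F_{ij}$ is $(-1)$-homogeneous, so the formula is consistent with replacing $\nabla u$ by $\nu$ up to the appropriate factor of $|\nabla u|$.
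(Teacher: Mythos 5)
Your proof is correct and is essentially the same as the paper's: both expand $\Delta_{F,p}u=F^{p-2}\bigl(FF_{ij}u_{ij}+(p-1)F_iF_ju_{ij}\bigr)$, use $\Delta_{F,p}u=0$ to solve for $F_{ij}u_{ij}$, and conclude via the identity $H_F=-F_{ij}u_{ij}$ from \eqref{h-f}. Your extra remarks about working only on the regular set and the homogeneity degrees are fine but not needed beyond what the paper already assumes.
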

\begin{proof}Note that $\Delta_{F, p}u=0$ implies $FF_{ij}u_{ij}+(p-1)F_iF_j u_{ij}=0$. The assertion follows from \eqref{h-f}.
\end{proof}
The second is a Kato-type identity for anisotropic $p$-harmonic functions, which generalizes Proposition 4.4 in \cite{FMP}.
\begin{proposition}\label{prop:2.2}
Let $u$ satisfy $\Delta_{F, p}u=0$. Let $\kappa_F$ denote the anisotropic principal curvature of regular level set of $u$. Then, at any point where $F(\nabla u)\not=0$, the following
identity holds true:
\begin{equation}\label{eqn:2.02}
\begin{aligned}
 a_{ij}a_{kl}u_{ik}u_{jl}=&F^2(\n u)\left[\frac{n-2}{n-1}\s_1^2(\kappa_F)-2\s_2(\kappa_F)\right]\\
 &+(1+\frac{(p-1)^2}{n-1})|\nabla (F(\nabla u))|^2_{a_F}\\
 &+(1-\frac{(p-1)^2}{n-1})|\nabla^{T} (F(\nabla u))|^2_{a_F},
 \end{aligned}
\end{equation}
where \begin{equation}\label{eqn:2.03}
|\nabla (F(\nabla u))|^2_{a_F}:=a_{ij}(F_ku_{ki})(F_lu_{lj})\ge 0,
\end{equation}
and
\begin{equation}\label{eqn:2.04}
|\nabla^T (F(\nabla u))|^2_{a_F}:=FF_{ij}(F_ku_{ki})(F_lu_{lj})\ge 0.
\end{equation}

\end{proposition}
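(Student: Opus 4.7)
The plan is to expand the quartic form $a_{ij}a_{kl}u_{ik}u_{jl}$ via the tensor splitting $a_{ij}=F_iF_j+FF_{ij}$, match the terms that do not involve $F_{ij}F_{kl}$ directly to the two Kato pieces $|\nabla(F(\nabla u))|^2_{a_F}$ and $|\nabla^T(F(\nabla u))|^2_{a_F}$, and rewrite the surviving $F^2F_{ij}F_{kl}u_{ik}u_{jl}$ as a trace in $M_{ij}:=F_{ik}u_{kj}$. Invoking the already-recorded identity \eqref{umbilic} converts that trace into $\sigma_1(\kappa_F)$ and $\sigma_2(\kappa_F)$, while the assumption $\Delta_{F,p}u=0$ is used only at the very last step to reabsorb a stray $F^2(F_{ij}u_{ij})^2$ into the Kato pieces.

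First I would carry out the expansion
\[
a_{ij}a_{kl}u_{ik}u_{jl}=F_iF_jF_kF_lu_{ik}u_{jl}+2FF_{ij}F_kF_lu_{ik}u_{jl}+F^2F_{ij}F_{kl}u_{ik}u_{jl},
\]
and then write the definition $|\nabla(F(\nabla u))|^2_{a_F}=a_{ij}(F_ku_{ki})(F_lu_{lj})$ out in the same splitting. One immediately recognizes the pure $F_iF_jF_kF_l$ term as $|\nabla(F(\nabla u))|^2_{a_F}-|\nabla^T(F(\nabla u))|^2_{a_F}$, and each of the two mixed cross terms (after relabeling dummy indices) as $|\nabla^T(F(\nabla u))|^2_{a_F}$. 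This reduces the identity to handling the single remaining piece $F^2F_{ij}F_{kl}u_{ik}u_{jl}$.

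For that piece I would set $M_{ij}:=F_{ik}u_{kj}$, so that $S_1(M)={\rm tr}(M)=F_{ij}u_{ij}$ and $F_{ij}F_{kl}u_{ik}u_{jl}={\rm tr}(M^2)=S_1^2(M)-2S_2(M)$. Identity \eqref{umbilic} then gives
\[
F^2F_{ij}F_{kl}u_{ik}u_{jl}=\frac{F^2}{n-1}(F_{ij}u_{ij})^2+F^2\Bigl[\frac{n-2}{n-1}\sigma_1^2(\kappa_F)-2\sigma_2(\kappa_F)\Bigr].
\]
The anisotropic $p$-harmonic equation, in the form given by Proposition \ref{prop:2.2'}, reads $FF_{ij}u_{ij}=-(p-1)F_iF_ju_{ij}$, so squaring and using $(F_iF_ju_{ij})^2=F_iF_jF_kF_lu_{ik}u_{jl}=|\nabla(F(\nabla u))|^2_{a_F}-|\nabla^T(F(\nabla u))|^2_{a_F}$ turns the $F^2(F_{ij}u_{ij})^2$ contribution into a combination of the two Kato pieces; collecting all terms yields \eqref{eqn:2.02}.

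The nonnegativity of $|\nabla(F(\nabla u))|^2_{a_F}$ is immediate from the uniform ellipticity of $a_{ij}$, while $|\nabla^T(F(\nabla u))|^2_{a_F}\ge 0$ follows because $F\ge 0$ and the Hessian of a Minkowski norm is positive semi-definite (with null direction along $\xi$). The main obstacle is the bookkeeping in the expansion step: one has to realize that the two mixed cross terms produced by $(F_iF_j+FF_{ij})(F_kF_l+FF_{kl})$ are literally both equal to $|\nabla^T(F(\nabla u))|^2_{a_F}$ and that the pure $F_iF_jF_kF_l$ term coincides with the difference $|\nabla(F(\nabla u))|^2_{a_F}-|\nabla^T(F(\nabla u))|^2_{a_F}$. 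Once this identification is made, the remainder is algebra combined with the single scalar input from $\Delta_{F,p}u=0$.
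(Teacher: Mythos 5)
Your proposal is correct and follows essentially the same route as the paper: expand $a_{ij}a_{kl}u_{ik}u_{jl}$ via the splitting $a_{ij}=F_iF_j+FF_{ij}$, identify the cross and pure terms with $|\nabla^T (F(\nabla u))|^2_{a_F}$ and $|\nabla (F(\nabla u))|^2_{a_F}-|\nabla^T (F(\nabla u))|^2_{a_F}$, convert $F^2F_{ij}F_{kl}u_{ik}u_{jl}$ into curvature quantities via \eqref{eqn:2.8} and \eqref{umbilic}, and use $\Delta_{F,p}u=0$ only through $FF_{ij}u_{ij}=-(p-1)F_iF_ju_{ij}$ together with \eqref{orth-decomp}. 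The paper merely reorganizes the same algebra through an auxiliary quadratic identity in $A=F_iF_ju_{ij}$ and $B=FF_{ij}u_{ij}$, so the difference is cosmetic; your cross-term identifications and the coefficient bookkeeping all check out.
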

\begin{remark}
$|\cdot|_{a_F}$ can be viewed as the norm with respect to the positive definite matrix $(a_{ij})$. Also by the definition, we have
\begin{eqnarray}\label{orth-decomp}
|\nabla (F(\nabla u))|^2_{a_F}=|\nabla^T (F(\nabla u))|^2_{a_F}+(F_iF_ju_{ij})^2.
\end{eqnarray}

 \end{remark}
\begin{proof} The proof is inspired by \cite{WX4}. Note that $a_{ij}=FF_{ij}+F_iF_j$, we can separate $\Delta_F u=a_{ij}u_{ij}=A+B$ where

\begin{equation*}
A=F_iF_ju_{ij}\ \ {\rm and }\ \ B=FF_{ij}u_{ij}.
\end{equation*}
Since\begin{equation*}
 a_{ij}a_{kl}u_{ik}u_{jl}=A^2+2FF_{kl}F_iF_ju_{ik}u_{jl}+F^2F_{ij}F_{kl}u_{ik}u_{jl},
\end{equation*}
and
\begin{equation*}
\frac{(A+B)^2}{n}+\frac n{n-1}\Big(\frac B{n}-\frac{n-1}n A\Big)^2=A^2+\frac1{n-1}B^2,
\end{equation*}%
we have
\begin{equation}\label{eqn:2.17}
\begin{aligned}
a_{ij}a_{kl}u_{ik}u_{jl}=&\frac{(A+B)^2}{n}+\frac n{n-1}\Big(\frac B{n}-\frac{n-1}n A\Big)^2-\frac1{n-1}B^2\\
&+2FF_{kl}F_iF_ju_{ik}u_{jl}+F^2F_{ij}F_{kl}u_{ik}u_{jl}
\\=&\frac{(a_{ij}u_{ij})^2}{n}+\frac n{n-1}\Big(\frac {FF_{ij}u_{ij}}{n}-\frac{n-1}n F_iF_ju_{ij}\Big)^2
\\&-\frac1{n-1}(FF_{ij}u_{ij})^2
+2FF_{kl}F_iF_ju_{ik}u_{jl}+F^2F_{ij}F_{kl}u_{ik}u_{jl}.
\end{aligned}
\end{equation}
On the on hand, it follows $\Delta_{F, p}u=0$ that
\begin{equation}\label{eqn:2.18}
a_{ij}u_{ij}=-(p-2)F_iF_ju_{ij}.
\end{equation}
On the other hand, we have the following computations
\begin{equation}\label{eqn:2.05}
 2FF_{kl}F_iF_ju_{ik}u_{jl}=2a_{kl}F_iF_ju_{ik}u_{jl}-2F_iF_jF_kF_lu_{ik}u_{jl}.
\end{equation}
Substituting \eqref{eqn:2.18} and \eqref{eqn:2.05} into \eqref{eqn:2.17} yields to
\begin{equation}\label{eqn:2.12}
\begin{aligned}
a_{ij}a_{kl}u_{ik}u_{jl}=&\Big(\frac{(p-1)^2}{n-1}-1\Big)(F_iF_ju_{ij})^2+2a_{kl}F_iF_ju_{ik}u_{jl}\\
&+F^2F_{ij}F_{kl}u_{ik}u_{jl}-\frac1{n-1}(FF_{ij}u_{ij})^2\\
=&\Big(\frac{(p-1)^2}{n-1}-1\Big)(F_iF_ju_{ij})^2+(1-\frac{(p-1)^2}{n-1})|\nabla (F(\nabla u))|^2_{a_F}\\
&+F^2F_{ij}F_{kl}u_{ik}u_{jl}-\frac1{n-1}(FF_{ij}u_{ij})^2\\
&+\left(1+\frac{(p-1)^2}{n-1}\right)|\nabla (F(\nabla u))|^2_{a_F}\\
=&\left(1-\frac{(p-1)^2}{n-1}\right)|\nabla^T (F(\nabla u))|^2_{a_F}+\left(1+\frac{(p-1)^2}{n-1}\right)|\nabla (F(\nabla u))|^2_{a_F}\\
&+F^2F_{ij}F_{kl}u_{ik}u_{jl}-\frac1{n-1}(FF_{ij}u_{ij})^2,
\end{aligned}
\end{equation}
where the last equality we have used \eqref{orth-decomp}.
%
%
 Finally using \eqref{umbilic}, we obtain\begin{eqnarray*}
F_{ij}F_{kl}u_{ik}u_{jl}-\frac1{n-1}(F_{ij}u_{ij})^2&=&\frac{n-2}{n-1}S_1^2(F_{ik}u_{kj})-2S_2(F_{ik}u_{kj})\\&=&\frac{n-2}{n-1}\s_1^2(\kappa_F)-2\s_2(\kappa_F).
\end{eqnarray*}
This completed the proof of Proposition \ref{prop:2.2}.
\end{proof}

%
%

%
%
%

We also have some facts as following
\begin{proposition}
Let $u$ satisfy $\Delta_{F, p}u=0$. Then, at any point where $F(\nabla u)\not=0$, the following
identity holds true:
\begin{equation}\label{eqn:2.21}
\begin{aligned}
F^{2-p}a_{jk,p}F_iu_{ik}F_lu_{lj}=&|\nabla^TF(\nabla u)|^2_{a_F}+\frac{1}{p-1}H_F^2F^2(\nabla u), \end{aligned}
\end{equation}
\begin{equation}\label{eqn:2.22}
\begin{aligned}
a_{jk,p}a_{il,p}u_{ik}u_{lj}F^{4-2p}=&\frac{n}{n-1}H_F^2F^2+2(p-1)|\n^T (F(\n u))|^2_{a_F}\\
&+F^2(\n u)\left[\frac{n-2}{n-1}\s_1^2(\kappa_F)-2\s_2(\kappa_F)\right].
\end{aligned}
\end{equation}
\end{proposition}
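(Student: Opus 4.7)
The plan is to prove both identities by straightforward algebraic expansion of $a_{jk,p} = F^{p-2}\bigl[FF_{jk}+(p-1)F_jF_k\bigr]$, then invoke the two consequences of the anisotropic $p$-harmonic equation that have been laid out earlier: Proposition \ref{prop:2.2'}, which gives $F_iF_j u_{ij} = \frac{H_F F}{p-1}$, and the identity $FF_{ij}u_{ij} = -H_F F$ (which follows either from \eqref{h-f} since $H_F=-F_{ij}u_{ij}$, or equivalently by combining $\Delta_{F,p}u=0$ with Proposition \ref{prop:2.2'}).

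For \eqref{eqn:2.21}, I would write
\begin{equation*}
F^{2-p}a_{jk,p}F_iu_{ik}F_lu_{lj}=FF_{jk}(F_iu_{ik})(F_lu_{lj})+(p-1)(F_iF_ku_{ik})^2.
\end{equation*}
The first summand is precisely $|\nabla^T F(\nabla u)|^2_{a_F}$ by definition \eqref{eqn:2.04}. Into the second summand I substitute $F_iF_ku_{ik}=\frac{H_FF}{p-1}$ from Proposition \ref{prop:2.2'}, which converts $(p-1)(F_iF_ku_{ik})^2$ into $\frac{1}{p-1}H_F^2F^2(\nabla u)$. This settles \eqref{eqn:2.21}.

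For \eqref{eqn:2.22}, I would expand the square
\begin{equation*}
F^{4-2p}a_{jk,p}a_{il,p}u_{ik}u_{lj}=F^2F_{ij}F_{kl}u_{ik}u_{jl}+2(p-1)FF_{jk}(F_iu_{ik})(F_lu_{lj})+(p-1)^2(F_iF_ku_{ik})^2.
\end{equation*}
The middle term is $2(p-1)|\nabla^T F(\nabla u)|^2_{a_F}$, and the last term is $H_F^2F^2$ by the same substitution as above. The remaining term $F^2F_{ij}F_{kl}u_{ik}u_{jl}$ is rewritten using the curvature identity \eqref{umbilic} (i.e.\ the $S_2$ computation already carried out in the proof of Proposition \ref{prop:2.2}), which yields
\begin{equation*}
F^2F_{ij}F_{kl}u_{ik}u_{jl}-\frac{1}{n-1}(FF_{ij}u_{ij})^2=F^2\left[\frac{n-2}{n-1}\sigma_1^2(\kappa_F)-2\sigma_2(\kappa_F)\right].
\end{equation*}
Replacing $(FF_{ij}u_{ij})^2$ by $H_F^2F^2$, the sum of $\tfrac{1}{n-1}H_F^2F^2$ with the residual $H_F^2F^2$ coming from the expansion of the square collapses to $\tfrac{n}{n-1}H_F^2F^2$, matching the right-hand side of \eqref{eqn:2.22}.

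Neither identity presents a genuine obstacle: both are pure algebra once one is willing to use $\Delta_{F,p}u=0$ in the specific forms $F_iF_ju_{ij}=\frac{H_F F}{p-1}$ and $FF_{ij}u_{ij}=-H_FF$. The only bookkeeping subtlety is keeping the two distinct norms $|\nabla F(\nabla u)|^2_{a_F}$ and $|\nabla^T F(\nabla u)|^2_{a_F}$ straight; here only the transversal (tangential) variant \eqref{eqn:2.04} actually appears, since each expansion pairs an $F_{jk}$ factor against the full Hessian, never an $a_{jk}$. The identity \eqref{umbilic} plays the same role it played in the proof of Proposition \ref{prop:2.2}, recycling the algebraic identity between $S_1^2(F_{ik}u_{kj})-(n-1)^{-1}S_1^2$-style terms and the squared traceless anisotropic second fundamental form.
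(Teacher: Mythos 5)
Your argument is correct, and it differs mildly but genuinely from the paper's route. You prove both identities from scratch by expanding $a_{jk,p}=F^{p-2}\bigl(FF_{jk}+(p-1)F_jF_k\bigr)$, so the only inputs are $F_iF_ju_{ij}=\tfrac{H_FF}{p-1}$ (Proposition \ref{prop:2.2'}), $FF_{ij}u_{ij}=-H_FF$ from \eqref{h-f}, and the curvature identity \eqref{umbilic}; as you note, with this splitting only the tangential norm \eqref{eqn:2.04} ever appears. The paper instead writes $a_{jk,p}=F^{p-2}\bigl(a_{jk}+(p-2)F_jF_k\bigr)$, expands, and feeds in the already-established Kato-type identity of Proposition \ref{prop:2.2} (recast as \eqref{eqn:2.19} via \eqref{orth-decomp} and Proposition \ref{prop:2.2'}) to handle the $a_{jk}a_{il}u_{ik}u_{lj}$ block, so its cross terms involve the full norm \eqref{eqn:2.03} before the orthogonal decomposition is applied; it also only displays the computation for \eqref{eqn:2.22}, leaving \eqref{eqn:2.21} as an unstated direct expansion. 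What your version buys is self-containedness (no appeal to the conclusion of Proposition \ref{prop:2.2}, at the cost of redoing the small piece of its algebra ending in \eqref{umbilic}) and an explicit proof of \eqref{eqn:2.21}; what the paper's version buys is economy, presenting the proposition as an immediate consequence of the Kato-type identity already on record. Both computations check out, including the coefficient bookkeeping $\tfrac{1}{n-1}H_F^2F^2+H_F^2F^2=\tfrac{n}{n-1}H_F^2F^2$ in \eqref{eqn:2.22}.
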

\begin{proof}
From Proposition \ref{prop:2.2}, we have that
\begin{equation}\label{eqn:2.19}
\begin{aligned}
 a_{ij}a_{kl}u_{ik}u_{jl}=&F^2(\n u)\left[\frac{n-2}{n-1}\s_1^2(\kappa_F)-2\s_2(\kappa_F)\right]\\
 &+\Big(1+\frac{(p-1)^2}{n-1}\Big)\frac1{(p-1)^2}F^2H_F^2+2|\nabla^{T} (F(\nabla u))|^2_{a_F}.
 \end{aligned}
\end{equation}
By using \eqref{eqn:2.1} and \eqref{eqn:2.19}, directing computation gives that
\begin{equation*}
\begin{aligned}
a_{jk,p}a_{il,p}u_{ik}u_{lj}F^{4-2p}=&(a_{jk}+(p-2)F_jF_k)(a_{il}+(p-2)F_iF_l)u_{ik}u_{lj}\\
=&a_{jk}a_{il}u_{ik}u_{lj}+2(p-2)|\n (F(\n u))|^2_{a_F}+\frac{(p-2)^2}{(p-1)^2}H_F^2F^2\\
=&\frac{n}{n-1}H_F^2F^2+2(p-1)|\n^T (F(\n u))|^2_{a_F}\\
&+F^2(\n u)\left[\frac{n-2}{n-1}\s_1^2(\kappa_F)-2\s_2(\kappa_F)\right].
\end{aligned}
\end{equation*}
This completes the assertion.
\end{proof}

\

\section{Anisotropic $p$-capacitary potential}\label{sect:3}
%
%
In this section, we will introduce the anisotropic $p$-capacity and its related properties.
\begin{definition}[anisotropic $p$-capacity]\label{def:2.1}\

Let $\Omega\subset\mathbb R^n$ be a bounded open set with smooth boundary.
%
%
%
For $p\in [1, n)$, the anisotropic $p$-capacity of $\Omega$ is defined as
\begin{equation}\label{eqn:2.3}
{\rm Cap}_{F,p}(\Omega)={\rm inf}\Big\{
\int_{\mathbb R^n}F^p(\nabla u)dx\,\Big|\, u\in C_{c}^{\infty}(\mathbb R^n),\, u\geq1\ \ {\rm on}\ \ \Omega\Big\},
\end{equation}
%
\end{definition}
It is easy to check that (see e.g. \cite{Maz}) \begin{equation}\label{eqn:6.4}
{\rm Cap}_{F, p}(\O)={\rm inf}\Big\{\int_{\mathbb R^n}F^{p}(\nabla u)\, dx\, \Big|\, u\geq\chi_{\Omega}, \, u\in{C}^{\infty}_{c}(\mathbb R^n)\Big\}.
\end{equation}

Next we consider the case $p\in (1, n)$.
The variational structure of the above definition leads naturally to the formulation of the problem
\begin{equation}\label{eqn:1.1}
\left\{
\begin{aligned}
\Delta_{F,p}u=&0\ \ {\rm in}\ \ \mathbb R^n\setminus\bar{\Omega}\\
u=&1\ \ {\rm on} \ \ \partial\Omega\\
u(x)\rightarrow& 0\ \ {\rm as}\ \ |x|\rightarrow\infty,
\end{aligned}\right.
\end{equation}
we say a function $u\in W^{1,p}_{loc}(U)$ is a
weak solution of $\Delta_{F,p}u=0$ in an open set $U$ if
\begin{equation*}
\int_U\langle F^{p-1}(\nabla u)F_{\xi}(\nabla u),\nabla\psi\rangle dx=0.
\end{equation*}
for any $\psi\in C^{\infty}_{c}(U)$.
A weak solution satisfying \eqref{eqn:1.1} is called the {\it anisotropic $p$-capacitary potential} associated with $\Omega$.
%

We collect the existence and regularity results for the anisotropic $p$-capacity potentials in the following theorem, see e.g. \cite{BC, BCS}.
\begin{theorem}[Existence and regularity of anisotropic $p$-capacity potentials]\

Let $\Omega$ be a bounded
open set with smooth boundary, and let $1<p<n$. Then
\begin{enumerate}
\item There exists a unique weak solution $u\in C^{1,\alpha}_{loc}(\mathbb R^n\setminus\bar{\Omega})
\cap C(\mathbb R^n\setminus\Omega)$ to \eqref{eqn:1.1};
\item $u$ is smooth away from ${\rm Crit}(u):=\{\n u=0\}$;
\item The solution u fullfills
\begin{equation}\label{eqn:2.4}
{\rm Cap}_{F, p}(\Omega)=
\int_{\mathbb R^n\setminus\bar{\Omega}}F^p(\nabla u)dx.
\end{equation}
%
\end{enumerate}
\end{theorem}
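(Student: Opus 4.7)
The plan is to obtain $u$ by an exhaustion argument and then invoke standard quasilinear elliptic regularity to prove smoothness, using comparison with Wulff-ball barriers to control the behavior at infinity.

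\textbf{Step 1 (Existence on bounded annular domains).} For $R$ large enough so that $\bar\Omega\subset\mathcal W_R$, I would first solve the obstacle-free Dirichlet problem
\[
\Delta_{F,p}u_R=0\quad\text{in}\quad\mathcal W_R\setminus\bar\Omega,\qquad u_R=1\text{ on }\partial\Omega,\qquad u_R=0\text{ on }\partial\mathcal W_R,
\]
by minimising the strictly convex, coercive energy $u\mapsto\int_{\mathcal W_R\setminus\bar\Omega}F^p(\nabla u)\,dx$ over $\{u\in W^{1,p}(\mathcal W_R\setminus\bar\Omega):u-\eta\in W^{1,p}_0\}$, where $\eta$ is any smooth function with the prescribed boundary values. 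Strict convexity of $\xi\mapsto F^p(\xi)$ (guaranteed by the uniform ellipticity in the definition of the Minkowski norm) gives a unique minimiser, which is a weak solution of the Euler--Lagrange equation $\Delta_{F,p}u_R=0$.

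\textbf{Step 2 (Passing to the limit $R\to\infty$).} By the comparison principle for $\Delta_{F,p}$ (a consequence of strict monotonicity of $\xi\mapsto F^{p-1}(\xi)F_\xi(\xi)$), the family $\{u_R\}$ is monotone non-decreasing in $R$ and bounded by $1$, so $u:=\lim_{R\to\infty}u_R$ is well defined in $\mathbb R^n\setminus\bar\Omega$. Explicit upper barriers of the form $c\,F^o(x)^{-(n-p)/(p-1)}$ (which are anisotropic $p$-harmonic away from the origin, being, up to constants, the anisotropic $p$-capacitary potentials of Wulff balls) pinned to $u_R$ on the outer boundary force the decay $u(x)\to 0$ as $|x|\to\infty$ and, via the comparison principle, yield uniform bounds that ensure the limit is a weak solution of \eqref{eqn:1.1}. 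Uniqueness of $u$ follows from the same comparison principle applied to two candidate solutions, after using the barrier to reduce the problem to a bounded exterior domain.

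\textbf{Step 3 (Regularity).} Interior $C^{1,\alpha}_{loc}$ regularity for weak solutions of $\Delta_{F,p}u=0$ follows from the general quasilinear theory (Lieberman--Tolksdorf--DiBenedetto adapted to anisotropic operators, cf.\ \cite{BC,BCS}), since the structure function $\xi\mapsto F^{p-1}F_\xi(\xi)$ satisfies the standard $p$-growth and monotonicity conditions. Continuity up to $\partial\Omega$ follows from boundary barrier arguments that use the smoothness of $\partial\Omega$ and standard Wiener-type criteria, verifying that every boundary point is regular. Away from ${\rm Crit}(u)=\{\nabla u=0\}$ the operator $a_{ij,p}(\nabla u)\partial_{ij}$ is uniformly elliptic with smooth coefficients, so Schauder/elliptic bootstrap yields $u\in C^\infty$ there.

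\textbf{Step 4 (Variational identification of capacity).} Testing \eqref{eqn:1.1} with a cut-off of $u$ itself and integrating by parts gives $\int_{\mathbb R^n\setminus\bar\Omega}F^p(\nabla u)\,dx=\int_{\partial\Omega}F^{p-1}(\nabla u)F(\nu)\,d\sigma$, which is finite because of the asymptotic decay of $|\nabla u|$ matched by the growth of the volume shells (controlled again by Wulff barriers). This integral is a competitor in \eqref{eqn:2.3} after smooth truncation and mollification, giving $\mathrm{Cap}_{F,p}(\Omega)\le\int F^p(\nabla u)\,dx$. The reverse inequality is obtained by testing the weak formulation against any admissible $v\in C_c^\infty(\mathbb R^n)$ with $v\ge 1$ on $\Omega$ and using the convexity inequality $F^p(\nabla v)\ge F^p(\nabla u)+pF^{p-1}(\nabla u)\langle F_\xi(\nabla u),\nabla(v-u)\rangle$, exactly as in the classical $p$-capacity case.

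\textbf{Anticipated main obstacle.} The delicate point is \emph{Step 2}: making the passage $R\to\infty$ rigorous requires producing explicit anisotropic super- and subsolutions to control both the decay of $u$ and, more importantly, of $F(\nabla u)$ at infinity, so that the boundary integral in Step 4 is finite and the energy integral is properly represented. The anisotropic Wulff-profile $F^o(x)^{-(n-p)/(p-1)}$ is the natural comparison function, but checking that it is pointwise an exact anisotropic $p$-harmonic function on $\mathbb R^n\setminus\{0\}$ (using Proposition \ref{prop:2.1}) is the key computation that makes the whole argument go through.
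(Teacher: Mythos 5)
The paper does not prove this theorem at all: it is quoted as a collection of known results with the citation to \cite{BC, BCS}, and the only related facts the paper itself establishes are the asymptotics (Proposition \ref{lem:2.1}) and the boundary representation of the capacity (Proposition \ref{lem:2.2}), which rely on the fundamental solution $\Gamma_{F,p}$ being anisotropic $p$-harmonic away from the origin. Your plan (direct method on annuli, Wulff-profile barriers and comparison for the limit $R\to\infty$, DiBenedetto--Tolksdorf--Lieberman type $C^{1,\alpha}$ regularity plus bootstrap off ${\rm Crit}(u)$, and the convexity/truncation argument identifying the energy with ${\rm Cap}_{F,p}(\Omega)$) is the standard and correct route taken in those references, and your identified key computation is precisely the fact the paper records for $\Gamma_{F,p}$.
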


\begin{remark}\label{rem:3.1}Note that since $\partial\Omega$ is assumed to be smooth, by the Hopf Lemma for
anisotropic $p$-harmonic functions, we have that $\nabla u\not=0$ in a neighborhood of  $\p\O$.
In particular, $u$ is smooth in $\p\O$. Coupled with this fact, the asymptotic
expansions below imply that ${\rm Crit}(u)=\big\{x\in\mathbb R^n\setminus\bar{\Omega}\,|\,Du(x)=0\big\}$
is a compact subset of $\mathbb R^n\setminus\bar{\Omega}$
(generically depending on $p$), and in turn that $u$ is analytic outside this set.
\end{remark} 

For $1<p<n$, let $$\Gamma_{F, p}(x)=\frac{p-1}{n-p}\left(\frac1{\kappa_{n-1}}\right)^{\frac{1}{p-1}}F^o(x)^{\frac{p-n}{p-1}}.$$
One can check that $$\Delta_{F,p}\Gamma_{F, p}(x)=\delta_0\hbox{ in }\rr^n,$$ where $\delta_0$ is the Dirac Delta function about the origin. We call
$\Gamma_{F, p}$ the fundamental solution to $\Delta_{F,p}u=0$ in $\rr^n$. See \cite{WX3}.

\begin{proposition}\label{lem:2.1}\

Let $\Omega\subset\mathbb R^n$ be a bounded open set with
smooth boundary, and $1<p<n$. Let $u$ is a
weak solution of $\Delta_{F,p}u=0$ in $\rr^n\setminus \O$. Then $u$ satisfies
\begin{enumerate}
\item $$\lim_{|x|\rightarrow+\infty}\frac{u(x)}{\Gamma_{F,p}(x)}={{\rm Cap}_{F, p}}(\Omega)^{\frac1{p-1}},$$
\item $$\nabla u(x)=\,{{\rm Cap}_{F, p}}(\Omega)^{\frac1{p-1}}\nabla \Gamma_{F,p}(x)+o(|x|^{-\frac{n-1}{p-1}}),\; {\rm as} \; |x|\to +\infty .$$
\end{enumerate}
\end{proposition}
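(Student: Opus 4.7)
The plan is to combine a barrier sandwich for the decay rate, a rescaling blow-down to extract the limiting profile, and a divergence-free flux identity to pin down the multiplicative constant.

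First I would confine $u$ by explicit radial barriers. Because $\partial\Omega$ is smooth and bounded, fix $r_1<r_2$ with $\mathcal{W}_{r_1}\subset\Omega\subset\mathcal{W}_{r_2}$. For a Wulff ball the anisotropic $p$-capacitary potential is explicit, $u_{\mathcal{W}_r}(x)=(r/F^o(x))^{(n-p)/(p-1)}$ for $F^o(x)\ge r$, and the weak comparison principle for $\Delta_{F,p}$ yields
\[
\Big(\tfrac{r_1}{F^o(x)}\Big)^{\!\frac{n-p}{p-1}}\le u(x)\le\Big(\tfrac{r_2}{F^o(x)}\Big)^{\!\frac{n-p}{p-1}}\qquad\text{for }F^o(x)\ge r_2.
\]
In particular the ratio $u/\Gamma_{F,p}$ is bounded above and below by positive constants near infinity, and $u$ decays at exactly the fundamental-solution rate.

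Next I would blow down. Set $u_R(x):=R^{(n-p)/(p-1)}u(Rx)$; by the scaling invariance of $\Delta_{F,p}$ the rescaled function is still anisotropic $p$-harmonic on $\mathbb{R}^n\setminus(\Omega/R)$, and the sandwich above transfers to uniform bounds $c_1\Gamma_{F,p}\le u_R\le c_2\Gamma_{F,p}$ on compact subsets of $\mathbb{R}^n\setminus\{0\}$. The interior $C^{1,\alpha}$ regularity of anisotropic $p$-harmonic functions at non-critical points (of Tolksdorf--DiBenedetto type) then makes $\{u_R\}$ precompact in $C^1_{\rm loc}(\mathbb{R}^n\setminus\{0\})$. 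Any subsequential limit $u_\infty$ is positive, anisotropic $p$-harmonic on $\mathbb{R}^n\setminus\{0\}$, trapped between two positive multiples of $\Gamma_{F,p}$, and vanishes at infinity; by the Kichenassamy--V\'eron type classification of positive $p$-harmonic functions with isolated singularity, adapted to $\Delta_{F,p}$, it must equal $c\,\Gamma_{F,p}$ for some constant $c>0$. I expect this classification step to be the main technical obstacle, since the classical references treat only the Euclidean $p$-Laplacian; the argument does transfer to $\Delta_{F,p}$ once the analogous strong comparison, Harnack and singularity removability results are invoked.

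Finally I would identify $c$ through a flux computation. The field $V_\xi(\nabla u):=F^{p-1}(\nabla u)F_\xi(\nabla u)$ is divergence-free on $\mathbb{R}^n\setminus\overline\Omega$, so integration by parts with the boundary value $u|_{\partial\Omega}=1$ gives
\[
\Big|\int_{\partial\mathcal{W}_\rho}\langle V_\xi(\nabla u),\nu\rangle\,d\sigma\Big|={\rm Cap}_{F,p}(\Omega)
\]
for every $\rho$ with $\mathcal{W}_\rho\supset\Omega$. A change of variables $y=Rx$ shows that the same number equals the absolute flux of $V_\xi(\nabla u_R)$ through the fixed boundary $\partial\mathcal{W}_1$, uniformly in $R$. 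Passing to the $C^1_{\rm loc}$ limit and comparing with the flux of $V_\xi(\nabla\Gamma_{F,p})$ through $\partial\mathcal{W}_1$, which has absolute value $1$ by the defining property $\Delta_{F,p}\Gamma_{F,p}=\delta_0$, forces $c^{p-1}={\rm Cap}_{F,p}(\Omega)$. Since $c={\rm Cap}_{F,p}(\Omega)^{1/(p-1)}$ is independent of the extracted subsequence, the whole family $u_R$ converges to $c\,\Gamma_{F,p}$ in $C^1_{\rm loc}(\mathbb{R}^n\setminus\{0\})$; translating this back via $y=Rx$ gives statement (1) at the level of values and (2) at the level of gradients, with the remainder $o(|y|^{-(n-1)/(p-1)})$ recording the $C^1_{\rm loc}$ convergence rate.
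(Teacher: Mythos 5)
Your overall strategy is sound and it ends exactly where the paper's proof does: a two-sided barrier bound by Wulff-ball potentials (the paper's comparison step giving $C_1\Gamma_{F,p}\le u\le C_2\Gamma_{F,p}$), followed by a flux computation through $\partial\mathcal W_R$ which, combined with $\mathrm{Cap}_{F,p}(\Omega)=\int_{\partial\Omega}F^{p-1}(\nabla u)F(\nu)\,d\sigma$ and $\Delta_{F,p}\Gamma_{F,p}=\delta_0$, pins the constant to $\mathrm{Cap}_{F,p}(\Omega)^{1/(p-1)}$. The difference is in the middle step: the paper obtains the existence of the limits in (1) and (2) by applying the Kichenassamy--V\'eron asymptotics directly to $u$, in their anisotropic form (\cite{WX3}, Theorem 4.1 and Remark 4.1), whereas you run a blow-down $u_R(x)=R^{(n-p)/(p-1)}u(Rx)$, use uniform $C^{1,\alpha}$ estimates to get $C^1_{\rm loc}$ compactness on $\mathbb R^n\setminus\{0\}$, and classify the limit profile. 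The blow-down packaging is a legitimate alternative, and it makes the passage from $C^1_{\rm loc}$ convergence to the gradient expansion (2) transparent via the homogeneity of $\Gamma_{F,p}$; the flux argument also neatly shows the limit is subsequence-independent.

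The caveat is that your classification step is not a soft consequence of ``Harnack, strong comparison and removability'': it is precisely the anisotropic Kichenassamy--V\'eron-type theorem that the paper cites, so your route relocates rather than removes that input, and as written it is left as a black box. Moreover, a KV-type statement describes the behaviour of a singular solution \emph{near the isolated singularity}; to conclude that the blow-down limit $u_\infty$ equals $c\,\Gamma_{F,p}$ on all of $\mathbb R^n\setminus\{0\}$ (you need this on a fixed annulus, not just near $0$) one must add a global comparison argument, e.g.\ compare $u_\infty$ on $\{\delta<F^o<R\}$ with $(c\pm\varepsilon)\Gamma_{F,p}\pm\eta$, using the KV asymptotics on the inner boundary and the decay $u_\infty\le c_2\Gamma_{F,p}$ on the outer one, and then let $R\to\infty$ and $\eta,\delta,\varepsilon\to0$; this step is missing from your sketch. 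Two smaller points: the flux identity you invoke presupposes the boundary formula for $\mathrm{Cap}_{F,p}(\Omega)$, which itself needs the cutoff argument near ${\rm Crit}(u)$ as in Proposition \ref{lem:2.2}; and $C^{1,\alpha}$ regularity of anisotropic $p$-harmonic functions holds across critical points (only higher regularity requires $\nabla u\neq0$), which is in fact what your compactness argument uses.
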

\begin{proof}
If  $u$ is solution of \eqref{eqn:1.1}, it is a standard argument by using comparison theorem to show that there exists two positive constants $C_1, C_2$ such that
$$C_1 \Gamma_{F, p}\le u\le C_2 \Gamma_{F, p}.$$

Following the argument of \cite{KV}, Theorem 1.1 and Remark 1.5, (see \cite{WX3}, Theorem 4.1 and Remark 4.1 for anisotropic case), we conclude that there exists  $\gamma\in \rr$ such that
\begin{eqnarray}\label{eqn:2.5}
&&\lim_{|x|\rightarrow+\infty}\frac{u(x)}{\Gamma_{F,p}(x)}=\gamma, \\
&&\lim_{|x|\rightarrow+\infty}(F^o(x))^{\frac{n-1}{p-1}}\left(\nabla u-\gamma\nabla \Gamma_{F,p}\right)=0.
\end{eqnarray}
%
%
It remains to show
 $$\gamma={\rm Cap}_{F, p}(\O)^{\frac1{p-1}}.$$
Through the following integration by parts that
holds true by the anisotropic $p$-harmonicity of $u$:
\begin{equation}\label{eqn:2.6}
\int\limits_{\partial\Omega}F^{p-1}(\nabla u)F(\nu)d\sigma=-
\lim_{R\rightarrow\infty}\int\limits_{\partial \mathcal{W}_R} F^{p-1}(\nabla u)\langle F_{\xi}(\nabla u),\nu_{\p \mathcal{W}_R}\rangle d\sigma,
\end{equation}
where $\nu=-\frac{\n u}{|\n u|}$ on $\p \O$ and $\nu_{\p \mathcal{W}_R}$ is the outward unit normal to $\mathcal{W}_R$,
we shall see in Proposition \ref{lem:2.2} below that the left-hand side of \eqref{eqn:2.6} is
exactly ${\rm Cap}_{F, p}(\O)$.
We next compute the limit in the right-hand side of  \eqref{eqn:2.6}.
Since $$\nabla u= \gamma \n \Gamma_{F, p}+o(|x|^{-\frac{n-1}{p-1}}),$$
we have
$$F(\nabla u)= \gamma F(\n \Gamma_{F,p})+o(|x|^{-\frac{n-1}{p-1}})= \gamma \left(\frac1{\kappa_{n-1}}\right)^{\frac{1}{p-1}}F^o(x)^{-\frac{n-1}{p-1}}+o(|x|^{-\frac{n-1}{p-1}})$$
On $\partial \mathcal{W}_R$,
 $$\nu_{\p \mathcal{W}_R}=\frac{\n F^o}{|\n F^o|}=-\kappa_{n-1}^{\frac{1}{p-1}}F^o(x)^{\frac{n-1}{p-1}}\frac{\n \Gamma_{F, p}}{|\n F^o|}=-\kappa_{n-1}^{\frac{1}{p-1}}F^o(x)^{\frac{n-1}{p-1}}\frac{\g^{-1}\n u}{|\n F^o|}+o(1).$$
It follows that on $\partial \mathcal{W}_R$
\begin{eqnarray*}
\langle F_{\xi}(\nabla u),\nu_{\p \mathcal{W}_R}\rangle&=&-\kappa_{n-1}^{\frac{1}{p-1}}F^o(x)^{\frac{n-1}{p-1}}\frac{\g^{-1}F(\n u)}{|\n F^o|}+o(1)
\\&=&-\frac{1}{|\n F^o|}+o(1).
\end{eqnarray*}
Hence
\begin{eqnarray*}
F^{p-1}(\nabla u)\langle F_{\xi}(\nabla u),\nu_{\p \mathcal{W}_R}\rangle=-\g^{p-1}\frac1{\kappa_{n-1}}\frac{(F^o(x))^{1-n}}{|\n F^o|}+o(|x|^{1-n}).
\end{eqnarray*}
Combining the fact that
$$\int\limits_{\partial \mathcal{W}_R} \frac{(F^o(x))^{1-n}}{|\n F^o|}d\sigma=\kappa_{n-1},$$ we deduce that
\begin{eqnarray*}
\lim_{R\rightarrow\infty}\int\limits_{\partial \mathcal{W}_R} F^{p-1}(\nabla u)\langle F_{\xi}(\nabla u),\nu_{\p \mathcal{W}_R}\rangle d\sigma=-\g^{p-1}.
\end{eqnarray*}
Thus $\gamma={\rm Cap}_{F, p}(\O)^{\frac1{p-1}}$, and
$(1), (2)$ of Proposition \ref{lem:2.1} follows.
\end{proof}
We need the following  expression for ${\rm Cap}_{F, p}(\Omega)$ in terms of an integral on $\partial\Omega$.
\begin{proposition}\label{lem:2.2}
Let $\Omega\subset\mathbb R^n$ be a bounded open set with smooth boundary, and let $1<p<n$.
Then, the solution $u$ to \eqref{eqn:1.1} satisfies
\begin{equation}\label{eqn:2.7}
{\rm Cap}_{F, p}(\Omega)=
\int_{\partial{\Omega}}F^{p-1}(\n u)F(\nu)d\sigma.
\end{equation}
where $\nu=-\frac{\n u}{|\n u|}$ on $\p \O$.
\end{proposition}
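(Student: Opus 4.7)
The plan is to integrate by parts against $u$ itself with a large cutoff, using Euler's identity $\langle F_\xi(\n u), \n u\rangle = F(\n u)$ from Proposition \ref{prop:2.1}(1) together with $p$-harmonicity. Since $\div(F^{p-1}(\n u)F_\xi(\n u)) = \Delta_{F,p}u$, the vector field $V := u F^{p-1}(\n u) F_\xi(\n u)$ satisfies
\[
\div V = F^{p-1}(\n u)\langle F_\xi(\n u), \n u\rangle + u\,\Delta_{F,p}u = F^p(\n u)
\]
wherever $\n u \neq 0$. Let $\eta_R$ be a smooth cutoff equal to $1$ on $\mathcal{W}_R$, supported in $\mathcal{W}_{2R}$, with $|\n\eta_R|\le C/R$, and $R$ so large that $\bar\O \subset \mathcal{W}_R$. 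Applying the divergence theorem to $\eta_R V$ on $\R^n\setminus\bar\O$, using $u \equiv 1$ and $\eta_R \equiv 1$ on $\p\O$, the fact that the outward normal of $\R^n\setminus\bar\O$ along $\p\O$ is $-\nu$, and the identity $\langle F_\xi(\n u), -\nu\rangle = F(\nu)$ (immediate from $\n u = -|\n u|\nu$, the evenness of $F$ giving $F_\xi(-\nu) = -F_\xi(\nu)$, and Euler's identity), one obtains
\[
\int_{\R^n\setminus\bar\O}\eta_R\, F^p(\n u)\,dx + \int_{\R^n\setminus\bar\O} u\, F^{p-1}(\n u)\langle F_\xi(\n u), \n\eta_R\rangle\,dx = \int_{\p\O}F^{p-1}(\n u)F(\nu)\,d\sigma.
\]

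It then remains to let $R \to \infty$. The first bulk integral converges to $\int_{\R^n\setminus\bar\O} F^p(\n u)\,dx = {\rm Cap}_{F,p}(\O)$ by monotone convergence together with \eqref{eqn:2.4}. For the remainder term, the cutoff gradient is supported in the annulus $\mathcal{W}_{2R}\setminus\mathcal{W}_R$. The barrier comparison $C_1\Gamma_{F,p}\le u\le C_2\Gamma_{F,p}$ and the induced gradient decay $F(\n u) = O(|x|^{-(n-1)/(p-1)})$ are established by the standard $\Gamma_{F,p}$-barrier argument recalled at the start of the proof of Proposition \ref{lem:2.1} (and, crucially, proved independently of the present proposition, so no circularity arises); they also yield $u = O(|x|^{(p-n)/(p-1)})$. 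Combined with $|\n\eta_R|\le C/R$ and $|\mathcal{W}_{2R}\setminus\mathcal{W}_R|\asymp R^n$, these asymptotics bound the remainder by a constant times $R^{-(n-p)/(p-1)}$, which vanishes as $R\to\infty$ since $p<n$, and \eqref{eqn:2.7} follows.

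The only real subtlety is the presence of the critical set $\mathrm{Crit}(u)$, across which $u$ is merely $C^{1,\alpha}$; this is the main technical obstacle, and is bypassed exactly as in \cite{AFM1}. By Remark \ref{rem:3.1}, $\n u \neq 0$ in a neighborhood of $\p\O$, so the boundary integral in \eqref{eqn:2.7} is classical. Moreover the vector field $F^{p-1}(\n u)F_\xi(\n u)$ extends continuously by $0$ across $\mathrm{Crit}(u)$ (by the 1-homogeneity of $F$), so the distributional identity $\div V = F^p(\n u)$ continues to hold on all of $\R^n\setminus\bar\O$ and the divergence theorem used above applies verbatim.
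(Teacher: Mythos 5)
Your argument is correct, and it rests on the same key identity as the paper's proof, namely $\mathrm{div}\big(u\,F^{p-1}(\nabla u)F_\xi(\nabla u)\big)=F^p(\nabla u)$ together with the asymptotics recalled in Proposition \ref{lem:2.1}; but the two technical obstructions are handled along a genuinely different route. The paper never invokes a distributional divergence: it excises an $\varepsilon$-tubular neighborhood $U_\varepsilon$ of ${\rm Crit}(u)$ and integrates by parts classically on $\{u\ge t\}\setminus U_\varepsilon$, so that the critical set contributes a boundary term on $\partial U_\varepsilon$ (which vanishes as $\varepsilon\to 0$ because $F(\nabla u)\to 0$ there) and the far region is controlled through the sublevel sets $\{u=t\}$, whose contribution $-t\int_{\{u=t\}}F^p(\nabla u)/|\nabla u|\,d\sigma$ vanishes as $t\to 0^+$ by the same asymptotics you use. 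You instead keep the whole exterior domain, absorb the critical set into the weak formulation (the field $F^{p-1}(\nabla u)F_\xi(\nabla u)$ is continuous across ${\rm Crit}(u)$ and divergence-free in the distributional sense, so testing with $u\psi$ gives $\mathrm{div}(uZ)=F^p(\nabla u)$ in $\mathcal{D}'$), and truncate at infinity with a cutoff on Wulff annuli, killing the remainder by the decay $u=O(|x|^{(p-n)/(p-1)})$, $F(\nabla u)=O(|x|^{-(n-1)/(p-1)})$; your exponent count $R^{-(n-p)/(p-1)}$ is right. Your approach buys a cleaner treatment of ${\rm Crit}(u)$ (no boundary term on $\partial U_\varepsilon$ to estimate), at the price of invoking a Gauss--Green identity for a merely continuous field with $L^1$ distributional divergence; to make "applies verbatim" airtight you should say one sentence about splitting off a collar of $\partial\Omega$ where $u$ is smooth (Remark \ref{rem:3.1}) and testing the distributional identity in the interior, which is routine given the facts you already state. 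Two further small points: the gradient decay does not follow from the barrier comparison alone but from the Kichenassamy--V\'eron type expansion \eqref{eqn:2.5} quoted in the proof of Proposition \ref{lem:2.1} (or from rescaled interior gradient estimates); and your circularity check is the right one --- only the identification $\gamma={\rm Cap}_{F,p}(\Omega)^{1/(p-1)}$ in Proposition \ref{lem:2.1} uses the present proposition, not the asymptotics themselves, so both your proof and the paper's are safe on this score.
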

\begin{proof}
For $\varepsilon>0$, let $U_{\varepsilon}$ be the $\varepsilon$-tubular neighborhood of Crit$(u)$, namely
\begin{equation*}
U_{\varepsilon}=\Big\{x\in\mathbb R^n\setminus\bar{\Omega}\,\big|\, {\rm dist}(x,{\rm Crit(u)})<\varepsilon\Big\},
\end{equation*}
where dist$(x,{\rm Crit}(u))$ is the  distance of $x$ from ${\rm Crit}(u)$. By the compactness of Crit$(u)$
in $\mathbb R^n\setminus\bar{\Omega}$, we have that $U_{\varepsilon}\subset\{u\geq t\}$, for $\varepsilon > 0$ and $t > 0$ small enough. Since  $F(\nabla u)=0$ on
Crit$(u)$, we have from identity \eqref{eqn:2.4} and by Monotone Convergence Theorem that
\begin{equation*}
{\rm Cap}_{F, p}(\Omega)=\lim_{\varepsilon\rightarrow0^{+}}\lim_{t\rightarrow0^{+}}
\int\limits_{\{u\geq t\}\setminus U_{\varepsilon}}F^p(\nabla u)dx.
\end{equation*}
Then, for $\varepsilon$ and $t$ small enough, by intergration by parts,
\begin{equation*}
\begin{aligned}
\int\limits_{\{u\geq t\}\setminus U_{\varepsilon}}F^p(\nabla u)dx=&\int\limits_{\{u\geq t\}\setminus U_{\varepsilon}}
{\rm div}(uF^{p-1}(\nabla u)F_{\xi}(\nabla u))dx\\
=& -t\int\limits_{\{u= t\}}\frac{F^{p}(\nabla u)}{|\nabla u|}d\sigma+\int\limits_{\partial\Omega}F^{p-1}(\nabla u)F(\nu)d\sigma\\
&+\int\limits_{\partial U_{\varepsilon}}uF^{p-1}(\nabla u)\langle F_{\xi}(\nabla u),\nu_{\p U_{\varepsilon}}\rangle d\sigma.
\end{aligned}
\end{equation*}
 By the computation in the proof of Proposition \ref{lem:2.1},  we see $\int\limits_{\{u= t\}}\frac{F^{p}(\nabla u)}{|\nabla u|}d\sigma$ is bounded when $t\to 0+$, which implies
 $$-t\int\limits_{\{u= t\}}\frac{F^{p}(\nabla u)}{|\nabla u|}d\sigma\to 0\hbox{ as } t\to 0.$$ On the other hand, we have $$ \int\limits_{\partial U_{\varepsilon}}uF^{p-1}(\nabla u)\langle F_{\xi}(\nabla u),\nu_{\p U_{\varepsilon}}\rangle d\sigma\to 0\hbox{ as } \varepsilon\to 0.$$since $F(\nabla u)$ tends to zero as $x$ approaches Crit$(u)$. The assertion follows.
\end{proof}

\

\section{Essential monotonicity}\label{sect:4}
In this section, we assume $u$ is anisotropic $p$-harmonic, that is, $\Delta_{F, p}u=0$. We will establish some Lemmas for later using.

Let $$
p^*:=\frac{(n-1)(p-1)}{n-p}.
$$ and \begin{equation*}
  \Lambda=\Big\{(p,q)\in\mathbb R^2\Big|\,1<p<n,\ {\rm and} \ q\geq1+\frac{1}{p^*}\Big\}.
\end{equation*}

For $(p, q)\in \Lambda$, we consider the function $\Phi_{p,q}:[1,+\infty)\rightarrow\mathbb R^n$ given by
\begin{equation}\label{eqn:4.1}
\Phi_{p,q}(\tau):=\tau^{(q-1)p^*}\int\limits_{\{u=1/\tau\}}F^{q(p-1)}(\nabla u)F(\nu)d\sigma.
\end{equation}

By the asymptotic behavior given in Lemma \ref{lem:2.1}, it is direct to check the limits of $\Phi_{p,q}(\tau)$.
\begin{lemma}\label{lem:4.0}
%
\begin{equation}\label{eqn:4.2}
\lim_{\tau\rightarrow+\infty}\Phi_{p,q}(\tau)=\big(\frac{n-p}{p-1}\big)^{(q-1)p^*}\big(\kappa_{n-1}\big)^{\frac{(q-1)(p-1)}{n-p}}({\rm Cap}_{F, p}(\Omega))^{1-\frac{(q-1)(p-1)}{n-p}}.
\end{equation}
\end{lemma}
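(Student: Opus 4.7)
The plan is to substitute the sharp asymptotic expansion of $\n u$ from Proposition \ref{lem:2.1} into the surface integral and compare it with the corresponding integral over the Wulff sphere $\p\mathcal{W}_{R(\tau)}$, where $R(\tau)$ is chosen so that the leading-order profile $C^{1/(p-1)}\Gamma_{F,p}$ takes the value $1/\tau$ on $\p\mathcal{W}_{R(\tau)}$ (here $C := {\rm Cap}_{F,p}(\Omega)$). The computation then reduces to four steps: identifying $R(\tau)$; evaluating the integrand via the $1$-homogeneity of $F$; invoking the scaled Wulff identity $\int_{\p\mathcal{W}_R}F(\nu)\,d\sigma = \k_{n-1}R^{n-1}$, which follows from \eqref{eqn:2.2} and homogeneity; and collecting the resulting powers of $\tau$, $\k_{n-1}$ and $C$.

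Concretely, I would first record that from $F(\n F^o) = 1$ (Proposition \ref{prop:2.1}(3)) and $1$-homogeneity one has $F(\n\Gamma_{F,p})(x) = \k_{n-1}^{-1/(p-1)}F^o(x)^{-(n-1)/(p-1)}$, and hence, by part (2) of Proposition \ref{lem:2.1},
$$F^{q(p-1)}(\n u)(x) = C^{q}\k_{n-1}^{-q}F^o(x)^{-q(n-1)}(1+o(1)) \quad \text{as } |x|\to\infty.$$
Solving $C^{1/(p-1)}\Gamma_{F,p}(x) = 1/\tau$ for $F^o(x)$ yields $R(\tau) = \big(\tfrac{p-1}{n-p}\big)^{(p-1)/(n-p)}\k_{n-1}^{-1/(n-p)}C^{1/(n-p)}\tau^{(p-1)/(n-p)}$. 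Combining this $C^1$-asymptotic with the strict monotonicity of $\Gamma_{F,p}$ along rays and the Implicit Function Theorem, one writes $\{u=1/\tau\}$ in Wulff-polar coordinates $(r,\theta)\in(0,\infty)\times\p\mathcal{W}_1$ as a radial graph $r=r(\theta,\tau)$ satisfying $r(\theta,\tau)/R(\tau)\to 1$ uniformly in $\theta$, with its $F$-weighted area element converging uniformly to that on $\p\mathcal{W}_{R(\tau)}$. Since $F^o$ is constant on $\p\mathcal{W}_{R(\tau)}$, the integrand $F^{q(p-1)}(\n u)$ is asymptotically constant on the level set, so the surface integral becomes $C^q\k_{n-1}^{-q}R(\tau)^{-q(n-1)}\int_{\p\mathcal{W}_{R(\tau)}}F(\nu)\,d\sigma\,(1+o(1)) = C^q\k_{n-1}^{1-q}R(\tau)^{(1-q)(n-1)}(1+o(1))$.

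Multiplying by $\tau^{(q-1)p^*}$ and using the identity $(p-1)(n-1)/(n-p) = p^*$, the $\tau$-powers cancel exactly, and a short manipulation of exponents yields the claimed constant $\big(\tfrac{n-p}{p-1}\big)^{(q-1)p^*}\k_{n-1}^{(q-1)(p-1)/(n-p)}C^{\,1-(q-1)(p-1)/(n-p)}$. The main obstacle is justifying, uniformly in $\theta$, the replacement of the integral over $\{u=1/\tau\}$ by the integral over $\p\mathcal{W}_{R(\tau)}$. This relies essentially on part (2) of Proposition \ref{lem:2.1}: the $o(|x|^{-(n-1)/(p-1)})$ remainder in the gradient expansion forces $\n u$ to be nonzero and asymptotically parallel to $\n F^o$ at infinity, which both produces the graph representation via the Implicit Function Theorem and yields uniform control of the ratios $F(\n u)/(C^{1/(p-1)}F(\n\Gamma_{F,p}))$ and $F(\nu)/F(\nu_{\p\mathcal{W}_{R(\tau)}})$. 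Once this $C^1$-closeness is in place, everything else is algebraic bookkeeping of exponents.
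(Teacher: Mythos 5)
Your proposal is correct, but it takes a genuinely different route from the paper. The paper never compares the level set $\{u=1/\tau\}$ with a Wulff sphere: it factors the integrand as $F^{(q-1)(p-1)}(\nabla u)\cdot F^{p-1}(\nabla u)F(\nu)$ and uses the exact flux identity $\int_{\{u=1/\tau\}}F^{p-1}(\nabla u)F(\nu)\,d\sigma={\rm Cap}_{F,p}(\Omega)$, valid at every regular level by the same divergence argument as in Proposition \ref{lem:2.2}; then only the scalar factor $F^{(q-1)(p-1)}(\nabla u)$ must be shown to be asymptotically the explicit constant times $\tau^{-(q-1)p^*}$, which follows from a two-sided squeeze using the $C^0$ asymptotics of $u$ and of $F(\nabla u)$ on the level set, with no parametrization of $\{u=1/\tau\}$ and no area comparison at all. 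Your argument instead transfers the whole integral to $\partial\mathcal{W}_{R(\tau)}$, which forces you to prove the $C^1$ radial-graph convergence of the level sets to Wulff spheres and hence $|\{u=1/\tau\}|_F=\kappa_{n-1}R(\tau)^{n-1}(1+o(1))$; this is the step you correctly flag as the main obstacle, and it can indeed be carried out with part (2) of Proposition \ref{lem:2.1} (the radial derivative of $u$ dominates, giving the graph via the Implicit Function Theorem, and the angular derivative of the graph function is $o(1)$, giving uniform convergence of the $F$-weighted area elements), but it is heavier than what the paper needs. Your identification of $R(\tau)$, the use of $F(\nabla F^o)=1$, the scaled Wulff identity $\int_{\partial\mathcal{W}_R}F(\nu)\,d\sigma=\kappa_{n-1}R^{n-1}$, and the final bookkeeping of exponents in $\tau$, $\kappa_{n-1}$ and ${\rm Cap}_{F,p}(\Omega)$ all check out. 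What each approach buys: the paper's is shorter and insensitive to the geometry of the level sets, exploiting only the conserved $p$-flux; yours is more geometric and yields, as a byproduct, the asymptotics of the anisotropic area of the level sets, at the cost of the extra $C^1$ comparison argument.
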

\begin{proof}
From the asymptotic behavior of $u$ in Proposition \ref{lem:2.1}, we know that
For $\varepsilon>0$, there exists $\tau_0$ such that for $\tau>\tau_0$  and $u(x)=\frac{1}{\tau}$,
\begin{eqnarray*}
&&(1-\varepsilon)\gamma \Gamma_{F,p}\le u(x)\le (1+\varepsilon)\gamma \Gamma_{F,p},
\\&&(1-\varepsilon)\gamma \left(\frac1{\kappa_{n-1}}\right)^{\frac{1}{p-1}}F^o(x)^{-\frac{n-1}{p-1}} \le F(\nabla u(x))\le (1+\varepsilon)\gamma \left(\frac1{\kappa_{n-1}}\right)^{\frac{1}{p-1}}F^o(x)^{-\frac{n-1}{p-1}}.
\end{eqnarray*}
From this one deduces that on such $\{u(x)=\frac{1}{\tau}\}$
\begin{eqnarray*}
&&(1-\tilde{\varepsilon})\gamma^{-\frac{(q-1)(p-1)^2}{n-p}} \big(\frac{n-p}{p-1}\big)^{(q-1)p^*}\big(\kappa_{n-1}\big)^{\frac{(q-1)(p-1)}{n-p}}\tau^{-(q-1)p^*}
\\&& \le F^{(q-1)(p-1)}(\nabla u(x))
\\&&\le (1+\tilde{\varepsilon})\gamma^{-\frac{(q-1)(p-1)^2}{n-p}} \big(\frac{n-p}{p-1}\big)^{(q-1)p^*}\big(\kappa_{n-1}\big)^{\frac{(q-1)(p-1)}{n-p}}\tau^{-(q-1)p^*}.
\end{eqnarray*}
where $\tilde{\varepsilon}$ is some function on $\varepsilon$ with the property $\lim_{\varepsilon\to 0}\tilde{\varepsilon}=0.$
Arguing as in Proposition \ref{lem:2.2}, we see that \begin{eqnarray*}
\int\limits_{\{u=1/\tau\}}F^{p-1}F(\nu)d\sigma={\rm Cap}_{F, p}(\Omega).
\end{eqnarray*}
Since $$\tau^{(q-1)p^*}\int\limits_{\{u=1/\tau\}}F^{q(p-1)}F(\nu)d\sigma=\tau^{(q-1)p^*}
\int\limits_{\{u=1/\tau\}}F^{(q-1)(p-1)}(F^{p-1}F(\nu)d\sigma$$
It follows that
\begin{eqnarray*}
&&(1-\tilde{\varepsilon})\big(\frac{n-p}{p-1}\big)^{(q-1)p^*}\big(\kappa_{n-1}\big)^{\frac{(q-1)(p-1)}{n-p}}({\rm Cap}_{F, p}(\Omega))^{1-\frac{(q-1)(p-1)}{n-p}}
\\&\le&\tau^{(q-1)p^*}\int\limits_{\{u=1/\tau\}}F^{q(p-1)}F(\nu)d\sigma
\\&\le& (1+\tilde{\varepsilon})\big(\frac{n-p}{p-1}\big)^{(q-1)p^*}\big(\kappa_{n-1}\big)^{\frac{(q-1)(p-1)}{n-p}}({\rm Cap}_{F, p}(\Omega))^{1-\frac{(q-1)(p-1)}{n-p}}.
\end{eqnarray*}
The assertion follows by letting $\tau\to\infty$ and then $\varepsilon\to 0$.
\end{proof}

For  $(p,q)\in\Lambda$. Let us consider the vector field $X$ with component
\begin{equation}\label{eqn:4.3}
\begin{aligned}
X^j
=&-(q-1)u^{-{(q-1)p^*}+2}F^{q(p-1)-1}\left(a_{jk,p}F^{1-p}F_iu_{ik}-{p^*}\frac{FF_j}{u}\right)
\end{aligned}
\end{equation}
where
\begin{equation*}
\begin{aligned}
 a_{jk,p}=F^{p-2}(a_{jk}+(p-2)F_jF_k).
 \end{aligned}
\end{equation*}

Next, we have the following Lemma.
\begin{lemma}\label{lem:4.1}
$\Phi_{p,q}$ is differentiable at the regular values of $u$ and
\begin{equation}\label{eqn:4.4}
\begin{aligned}
\frac{d}{d\tau} \Phi_{p,q}(\tau)=\int\limits_{\{u=1/\tau\}}\langle X,\frac{\nabla u}{|\nabla u|}\rangle d\sigma=
\int\limits_{\{u<1/\tau\}}{\rm div}X dx.
\end{aligned}
\end{equation}
\end{lemma}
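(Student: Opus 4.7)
The plan is to establish the two equalities in turn at any $\tau$ for which $1/\tau$ is a regular value of $u$. At such $\tau$, the implicit function theorem combined with the compactness of $\mathrm{Crit}(u)\subset\mathbb{R}^n\setminus\bar\Omega$ (Remark \ref{rem:3.1}) guarantees that $\{u=1/\tau'\}$ is a smooth hypersurface and $F(\nabla u)>0$ on a neighborhood of it for all $\tau'$ near $\tau$; consequently $X$ is smooth there and the differentiations below are legitimate.

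For the first equality I differentiate $\Phi_{p,q}$ using the coarea formula. Since $\nu=-\nabla u/|\nabla u|$, the evenness and $1$-homogeneity of $F$ give $F(\nu)=F(\nabla u)/|\nabla u|$, hence
$$\Phi_{p,q}(\tau)=\tau^{(q-1)p^*}I(1/\tau),\qquad I(t):=\int_{\{u=t\}}\frac{F^{q(p-1)+1}(\nabla u)}{|\nabla u|}\,d\sigma.$$
Set $W^j:=F^{q(p-1)}(\nabla u)F_j(\nabla u)$. By Euler's identity $F_j u_j=F$, $I(t)=\int_{\{u=t\}}\langle W,\nabla u/|\nabla u|\rangle\,d\sigma$. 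Applying the divergence theorem to $W$ on the slab $\{t_1<u<t_2\}$ together with the coarea formula yields $I'(t)=\int_{\{u=t\}}\mathrm{div}\,W/|\nabla u|\,d\sigma$ at regular $t$. A short computation using $F_{jk}u_k=0$ (Proposition \ref{prop:2.1}(2)), $F_iF_ju_{ij}=FH_F/(p-1)$ (Proposition \ref{prop:2.2'}), and $F_{ij}u_{ij}=-H_F$ from \eqref{h-f} gives
$$\mathrm{div}\,W=q(p-1)F^{q(p-1)-1}F_iF_ju_{ij}+F^{q(p-1)}F_{ij}u_{ij}=(q-1)F^{q(p-1)}H_F.$$
Differentiating $\Phi_{p,q}$ by the chain rule and using $u=1/\tau$ on the level set then produces
$$\frac{d}{d\tau}\Phi_{p,q}(\tau)=-(q-1)\tau^{(q-1)p^*-2}\int_{\{u=1/\tau\}}\frac{F^{q(p-1)}(H_F-p^*\tau F)}{|\nabla u|}\,d\sigma.$$
On the other hand, expanding $\sum_j X^j u_j$ from \eqref{eqn:4.3} via $a_{jk,p}u_j=(p-1)F^{p-1}F_k$ (a consequence of Proposition \ref{prop:2.1}(2)), $F_j u_j=F$, and Proposition \ref{prop:2.2'}, produces exactly this integrand divided by $|\nabla u|$. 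This establishes the first equality.

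For the second equality I apply the divergence theorem to $X$ on the exhausting subdomains $A_{R,\varepsilon}:=(\{u<1/\tau\}\cap\mathcal{W}_R)\setminus\overline{U_\varepsilon(\mathrm{Crit}(u))}$, where $U_\varepsilon$ is an $\varepsilon$-tubular neighborhood. The outward normal on $\{u=1/\tau\}$ is $\nabla u/|\nabla u|$, producing the desired flux. The contribution over $\partial U_\varepsilon$ vanishes as $\varepsilon\to 0$ because the factor $F^{q(p-1)-1}$ in $X$ tends to $0$ on $\mathrm{Crit}(u)$: the condition $q\ge 1+1/p^*$ on $\Lambda$, together with $n\ge 3$ and $p^*=(n-1)(p-1)/(n-p)$, is equivalent to $q(p-1)\ge 1$. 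The contribution over $\partial\mathcal{W}_R$ vanishes as $R\to\infty$: by Proposition \ref{lem:2.1}, $u$ is asymptotic to a multiple of $\Gamma_{F,p}$, whose level sets are Wulff spheres with $H_F=(n-1)/F^o$; since $p^*(n-p)/(p-1)=n-1$ one has $p^*F/u=(n-1)/F^o$ on these asymptotic level sets, so $H_F-p^*\tau F$ is of order $o((F^o)^{-1})$, forcing the flux integrand to decay strictly faster than $(F^o)^{-(n-1)}$. The main technical obstacle is precisely this boundary analysis at infinity: the exact cancellation between $H_F$ and $p^*F/u$ on the asymptotic profile of $u$ is essential, as a naive bound on $|X|$ alone would not provide enough decay.
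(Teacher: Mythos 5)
Your proof of the first equality is correct and is in substance the paper's own computation (compare \eqref{eqn:3.3}): coarea plus the divergence theorem on thin slabs around a regular level, the identities $F(\nu)=F(\nabla u)/|\nabla u|$, $a_{jk,p}u_j=(p-1)F^{p-1}F_k$, \eqref{h-f} and Proposition \ref{prop:2.2'}, and then matching the result with $\langle X,\nabla u\rangle$; only the bookkeeping (differentiating $\tau^{(q-1)p^*}I(1/\tau)$ via the auxiliary field $W=F^{q(p-1)}F_\xi(\nabla u)$ instead of first rewriting $\Phi_{p,q}$ as a bulk integral) differs, and your preliminary localization away from ${\rm Crit}(u)$ near a regular level is the right way to legitimize the slab argument.

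The gap is in your justification of the second equality. First, near the critical set: $X$ contains second derivatives of $u$ through $a_{jk,p}F^{1-p}F_iu_{ik}=(p-1)F^{-1}F_jF_kF_iu_{ik}+F_{jk}(\nabla u)F_iu_{ik}$, and $u$ is only $C^{1,\alpha}$ across ${\rm Crit}(u)$, so on $\partial U_\varepsilon$ the vanishing prefactor $F^{q(p-1)-1}$ does not control $|X|$: effectively $F$ enters with exponent $q(p-1)-2$ against $|\nabla^2u|$, and neither this product nor $\mathcal H^{n-1}(\partial U_\varepsilon)$ is controlled as $\varepsilon\to0$. This is precisely the obstruction the paper never tries to overcome at the level of Lemma \ref{lem:4.1}; in Theorems \ref{thm:4.1} and \ref{thm:4.2} it is circumvented by cutting off with $\chi\bigl(u^{-(q-1)p^*}F^{(q-1)(p-1)}(\nabla u)\bigr)$ and using only the \emph{sign} of the error term $\Theta$ in \eqref{eqn:4.9}, never its smallness. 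Second, at infinity: the flux $\langle X,\nu_{\partial\mathcal W_R}\rangle$ involves the full vector $X$, not just its $\nabla u$-component, so besides $H_F-p^*F/u$ you also need the transversal part (essentially $F_{jk}(\nabla u)F_iu_{ik}$, i.e.\ the tangential derivative of $F(\nabla u)$) to decay; moreover the bound $H_F-p^*\tau F=o\bigl((F^o)^{-1}\bigr)$ is a second-order statement about $u$, whereas Proposition \ref{lem:2.1} supplies only first-order asymptotics with no rate, so this decay is not available from what is established in the paper. (A minor slip: $q\ge 1+1/p^*$ implies, but is not equivalent to, $q(p-1)>1$.) To be fair, the paper itself dismisses the second equality with ``follows from the divergence theorem'' and never uses it over a region containing ${\rm Crit}(u)$ or up to infinity in the later arguments; but as a standalone proof your boundary analysis at $\partial U_\varepsilon$ and at $\partial\mathcal W_R$ does not close without additional regularity and asymptotic estimates.
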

\begin{proof}
 By co-area formula and divergence theorem we have
\begin{equation}\label{eqn:3.3}
\begin{aligned}
\frac{d}{d\tau}\Phi_{p,q}(\tau)=&\frac{d}{d\tau}\Big\{\int\limits_{\{u=1/\tau\}}
u^{-{(q-1)p^*}}F^{q(p-1)}F(\nu)d\sigma\Big\}\\
=&\frac{d}{d\tau}\Big\{\int\limits_{\{u<1/\tau\}}\partial_{x_i}\left(u^{-{(q-1)p^*}}F^{(q-1)(p-1)}
F^{p-1}F_i\right)dx\Big\}\\
=&\frac{d}{d\tau}\Big\{\int\limits_{\{u<1/\tau\}}(q-1)u^{-{(q-1)p^*}}F^{q(p-1)}\left(-{p^*}\frac{F}{u}+(p-1)\frac{F_iF_ku_{ik}}{F}\right)dx\Big\}\\
=&-(q-1)\tau^{{(q-1)p^*}-2}\int\limits_{\{u=1/\tau\}}F^{q(p-1)-1}\left((p-1)\frac{F_iF_ku_{ik}}{F}-{p^*}\frac{F}{u}\right)F(\nu)d\sigma.
\end{aligned}
\end{equation}
By using the definition of $X$ and the fact
$$a_{jk,p}u_j=(p-1)F^{p-2}FF_k,$$ we deduce
\begin{equation*}
\begin{aligned}
\langle X,\,\nabla u\rangle=&-(q-1)u^{-{(q-1)p^*}+2}F^{q(p-1)-1}\left((p-1)F_kF_iu_{ik}-{p^*}\frac{F^2}{u}\right).
\end{aligned}
\end{equation*}
Then, we obtain the first equality.  The second equality follows from the divergence theorem,.
\end{proof}
\vskip 2mm

Finally, we compute the divergence of $X$.
\begin{lemma}[Divergence of $X$]\label{lem:4.2}
For any $(p,q)\in\Lambda$, let $u$ be a solution of \eqref{eqn:1.1} and $X$ be the vector field defined
in \eqref{eqn:4.3}. Then, the following identity holds at any point $x\in\mathbb R^n\setminus\Omega$ such that $F(\nabla u)\not=0$.
\begin{equation}\label{eqn:4.5}
\begin{aligned}
{\rm div}X=&-(q-1)u^{-{(q-1)p^*}+2}F^{q(p-1)-1}(\nabla u)\Big\{
\left[\frac{n-2}{n-1}\s_1^2(\kappa_F)-2\s_2(\kappa_F)\right]\\
&+\Big(q(p-1)-1\Big)\frac{|\nabla^T(F(\nabla u))|^2_{a_F}}{F^2(\n u)}\\
&+\Big(q-1-\frac{1}{p^*}\Big)\left|H_F-{p^*}\frac{F(\n u)}u\right|^2\Big\}\leq0.
\end{aligned}
\end{equation}
\end{lemma}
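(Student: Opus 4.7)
The plan is to exploit the divergence-free property $\partial_j S_2^{ij}(W)=0$ from \eqref{eqn:2.10}. Using $\Delta_{F,p}u=0$ together with \eqref{eqn:2.9}, we have $a_{jk,p}u_{ki}=-S_2^{ij}(W)$, so that $X$ admits the cleaner form
\[
X^j \;=\; (q-1)\,u^{-(q-1)p^*+2}\left[F^{q(p-1)-p}\,S_2^{ij}(W)\,F_i \;+\; p^*\,\frac{F^{q(p-1)}F_j}{u}\right].
\]
In this form, computing $\operatorname{div}X$ by the Leibniz rule splits into three classes of pieces: one from differentiating the $u^{-(q-1)p^*+2}$ factor (which, contracted against the bracket, produces $-F^{q(p-1)}(H_F-p^*F/u)$ after using $u_j a_{jk,p}=(p-1)F^{p-1}F_k$ together with Proposition \ref{prop:2.2'}); one from differentiating the $F$-powers (which produces factors $F_lu_{lj}$ to contract with the bracket); and one from differentiating $S_2^{ij}(W)F_i$ and $F_j/u$ themselves, in which the divergence-free property annihilates the $\partial_j S_2^{ij}(W)$ contribution and only leaves $S_2^{ij}(W)F_{il}u_{lj}$ together with standard $F_{ij}u_{ij}$-type terms.

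The reduction of the resulting contractions to geometric quantities rests on a short list of identities, all valid where $F(\nabla u)\neq 0$: Euler's homogeneity $F_j u_j=F$ and $F_{ij}u_j=0$ from Proposition \ref{prop:2.1}; the contraction $u_j a_{jk,p}=(p-1)F^{p-1}F_k$; the trace formula $F_{ij}u_{ij}=-H_F$ from \eqref{h-f}; Proposition \ref{prop:2.2'} giving $F_iF_ju_{ij}=FH_F/(p-1)$ on the $p$-harmonic locus; the Kato-type identity \eqref{eqn:2.21} applied to $a_{jk,p}(F_iu_{ik})(F_lu_{lj})$; and the umbilic identity \eqref{umbilic} recast as $F_{jk}u_{ki}F_{il}u_{lj} = \frac{H_F^2}{n-1}+\frac{n-2}{n-1}\sigma_1^2(\kappa_F)-2\sigma_2(\kappa_F)$. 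With these, the mixed contraction $F_lu_{lj}S_2^{ij}(W)F_i$ collapses to $-F^{p-2}\bigl[|\nabla^T F(\nabla u)|^2_{a_F} + \tfrac{F^2H_F^2}{p-1}\bigr]$, and $S_2^{ij}(W)F_{il}u_{lj}$ becomes a clean sum of $|\nabla^T F(\nabla u)|^2_{a_F}$, $H_F^2$, and the umbilic piece.

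The final step is to collect coefficients and recognize the completed square. The tangential-gradient contributions combine into $(q(p-1)-1)F^{q(p-1)-3}|\nabla^T F(\nabla u)|^2_{a_F}$, and the umbilic contributions into $F^{q(p-1)-1}\bigl[\tfrac{n-2}{n-1}\sigma_1^2(\kappa_F)-2\sigma_2(\kappa_F)\bigr]$. The remaining terms in $H_F^2$, $FH_F/u$, and $F^2/u^2$—arising from three independent places in the calculation—must algebraically assemble into a single multiple of $|H_F-p^*F/u|^2$; here the elementary identity $\frac{1}{p-1}-\frac{1}{n-1}=\frac{1}{p^*}$ is precisely what reduces the $H_F^2$-coefficient to $(q-1-1/p^*)$, after which the cross term in $FH_F/u$ and the $F^2/u^2$ term automatically match the expected expansion of the square. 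Factoring out the global prefactor $-(q-1)u^{-(q-1)p^*+2}F^{q(p-1)-1}$ then yields \eqref{eqn:4.5}, and non-positivity follows from $q-1\geq 1/p^*>0$, $q(p-1)-1\geq 0$ (both enforced by $(p,q)\in\Lambda$), and the manifest nonnegativity of each bracketed term (the umbilic piece is the squared norm of the traceless anisotropic second fundamental form, the other two are squares). The main obstacle is not conceptual but organizational: tracking the $F$-exponents through every contraction without arithmetic slips, and verifying that the specific power $-(q-1)p^*+2$ in the definition of $X$ is exactly what forces the scattered $H_F$-and-$F/u$ contributions to combine into the single square $|H_F-p^*F/u|^2$.
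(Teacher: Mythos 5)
Your proposal is correct and follows essentially the same route as the paper: you rewrite $X$ exactly as in \eqref{eqn:4.3'} via $S_2^{ij}(W)$ (using $a_{jk,p}u_{ki}=-S_2^{ij}(W)$), exploit the divergence-free property \eqref{eqn:2.10}, and reduce the Hessian contractions with Proposition \ref{prop:2.2'}, identity \eqref{eqn:2.21} and the umbilic identity \eqref{umbilic}, with the coefficient bookkeeping (in particular $\tfrac1{p-1}-\tfrac1{n-1}=\tfrac1{p^*}$) matching the paper's \eqref{eqn:3.5}--\eqref{eqn:3.7}. The only difference is organizational: you peel off the $F$-powers and evaluate $S_2^{ij}(W)F_{il}u_{lj}$ directly, whereas the paper keeps $V_{\xi_i}$ attached and invokes \eqref{eqn:2.11} together with the prepackaged identity \eqref{eqn:2.22}.
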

\begin{proof}Using \eqref{eqn:2.9} and $\Delta_{F, p}u=0$,
we can write $X$ in the following way
\begin{equation}\label{eqn:4.3'}
\begin{aligned}
X^j=(q-1)\Big\{&u^{-{(q-1)p^*}+2}F^{(q-2)(p-1)-1}S^{ij}_2V_{\xi_i}(\nabla u)\\
&+p^*u^{-{(q-1)p^*}+1}F^{(q-1)(p-1)}F^{p-1}F_j\Big\}.
\end{aligned}
\end{equation}
Separating
\begin{equation}\label{eqn:3.4}
{\rm div} X=(q-1)(I+II),
\end{equation}
where
\begin{equation*}
\begin{aligned}
I=&\partial_{x_j}\Big(u^{-{(q-1)p^*}+2}F^{(q-2)(p-1)-1}
S^{ij}_2V_{\xi_i}(\nabla u)\Big),\\
II=&{p^*}\partial_{x_j}\Big(u^{{-(q-1)p^*}+1}F^{(q-1)(p-1)}F^{p-1}F_j\Big),
\end{aligned}
\end{equation*}
%
%
By using Proposition \ref{prop:2.2'}, \eqref{eqn:2.11} and \eqref{h-f-2}, we have the following
computations
\begin{equation}\label{eqn:3.5}
\begin{aligned}
I=&-(2-{(q-1)p^*})u^{-{(q-1)p^*}+1}F^{q(p-1)}H_F-u^{-{(q-1)p^*}+2}F^{q(p-1)-3}\times\\ &\times\Big[\big((q-2)(p-1)-1\big)F^{2-p}a_{jk,p}F_iF_lu_{ki}u_{lj}+F^{4-2p}a_{jk,p}a_{il,p}u_{ik}u_{lj}\Big],
\end{aligned}
\end{equation}
and
\begin{equation}\label{eqn:3.6}
\begin{aligned}
II=\Big(1-{(q-1)p^*}\Big){p^*}u^{-(q-1){p^*}}F^{q(p-1)+1}+{{(q-1)p^*}}u^{1-{(q-1)p^*}}F^{q(p-1)}{H_F}.
\end{aligned}
\end{equation}
%

Next, by using \eqref{eqn:2.21} and \eqref{eqn:2.22}, we have
\begin{equation}\label{eqn:3.7}
\begin{aligned}
 &\Big[(q-2)(p-1)-1\Big]F^{2-p}a_{jk,p}F_iF_lu_{ki}u_{lj}+F^{4-2p}a_{jk,p}a_{il,p}u_{ik}u_{lj}\\
 =&\Big[(q-2)(p-1)-1\Big]\Big(|\n^T (F(\n u))|^2_{a_F}+\frac{1}{p-1}H_F^2F^2\Big)+\frac{n}{n-1}H_F^2F^2\\
 &+2(p-1)|\n^T (F(\n u))|^2_{a_F}+F^2\left[\frac{n-2}{n-1}\s_1^2(\kappa_F)-2\s_2(\kappa_F)\right]\\
 =&\Big[(q-1)-\frac{1}{p^*}\Big]F^2H^2_F+\Big(q(p-1)-1\Big)|\nabla^T(F(\nabla u))|^2_{a_F}\\
 &+F^2\left[\frac{n-2}{n-1}\s_1^2(\kappa_F)-2\s_2(\kappa_F)\right].
\end{aligned}
\end{equation}
%
%
%

Substituting \eqref{eqn:3.5}-\eqref{eqn:3.7} into \eqref{eqn:3.4} yields \eqref{eqn:4.5}.
\end{proof}

\

\section{Effective monotonicity and $L^p$ anisotropic Minkowski inequality}\label{sect:5}

The aim of this section is to given a complete proof of Theorem \ref{thm:1.1}.  For this purpose,
we will establish the following two effective monotonicity inequalities:
\begin{equation*}
\Phi'_{p,q}(1)\leq 0\ \ {\rm  and} \ \ \Phi_{p,q}(+\infty)=\lim\limits_{\tau\to\infty} \Phi_{p,q}(\tau)\leq  \Phi_{p,q}(1).
\end{equation*}

\subsection{First effective inequality}\

Combining Lemma \ref{lem:4.1} and \ref{lem:4.2}, we see that $\Phi'_{p,q}(s)\leq \Phi'_{p,q}(S)$ for any $s<S$ if ${\rm Crit}(u)=\emptyset$.
However, in general ${\rm Crit}(u)\not=\emptyset$.
Nevertheless, we are  able to provide an effective version of the considered monotonicity,
showing that  is actually in force, provided $S$ is large enough and $s$ is closed to $1$.
This is enough to get the desired effective inequality $\Phi'_{p,q}(1)\leq0$.

\begin{theorem}[Effective Monotonicity Formula--I]\label{thm:4.1}
For $(p,q)\in\Lambda$,  let $u$ be the
solution to \eqref{eqn:1.1}  and let $1< \bar s < \bar S <+\infty$
be such that ${\rm Crit}(u)\subset\{\bar S^{-1} < u < \bar s^{-1}\}$. Then, for every $1 \leq s \leq \bar s \leq \bar S \leq S$,
the inequality
\begin{equation}\label{eqn:4.6}
  \Phi'_{p,q}(s)\leq \Phi'_{p,q}(S),
\end{equation}
holds true, where $\Phi_{p,q}$ is defined in \eqref{eqn:4.1}. Moreover, one has that $\Phi'_{p,q}(1)\leq0$.
\end{theorem}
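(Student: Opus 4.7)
Plan for Theorem \ref{thm:4.1}. The strategy is to isolate the critical set ${\rm Crit}(u)$ by excising a small tubular neighborhood $U_\eps$, apply the divergence theorem on the resulting annular region where ${\rm div}\,X \le 0$ holds classically by Lemma \ref{lem:4.2}, and pass to the limit $\eps \to 0^+$. Since ${\rm Crit}(u)$ is compact by Remark \ref{rem:3.1} and by hypothesis lies in $\{\bar S^{-1} < u < \bar s^{-1}\}$, for $1 \le s \le \bar s \le \bar S \le S$ we have $U_\eps \subset \{S^{-1} < u < s^{-1}\}$ for all $\eps$ sufficiently small. Set $A_\eps := \{S^{-1} \le u \le s^{-1}\} \setminus U_\eps$, a bounded Lipschitz domain on which $\nabla u \ne 0$ and $u$ is analytic, so Lemma \ref{lem:4.2} applies pointwise. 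Applying the divergence theorem to $A_\eps$ and tracking outward normals on each boundary piece (namely $\pm \nabla u/|\nabla u|$ on the level sets and $-\nu_\eps$ on $\partial U_\eps$) yields
\begin{equation*}
\int_{A_\eps}{\rm div}\,X\,dx \;=\; \Phi'_{p,q}(s) \;-\; \Phi'_{p,q}(S) \;-\; \int_{\partial U_\eps}\langle X,\nu_\eps\rangle\,d\sigma,
\end{equation*}
where the first two terms come from identifying the level-set contributions via Lemma \ref{lem:4.1} and $\nu_\eps$ is the outward unit normal of $U_\eps$. Since the left-hand side is non-positive,
\begin{equation*}
\Phi'_{p,q}(s) - \Phi'_{p,q}(S) \;\le\; \int_{\partial U_\eps}\langle X,\nu_\eps\rangle\,d\sigma.
\end{equation*}

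The main obstacle is to show that this excision remainder vanishes as $\eps \to 0^+$. A direct computation using $q \ge 1 + 1/p^*$ and $p^*=(n-1)(p-1)/(n-p)$ yields $q(p-1)-1 \ge (n-2)(p-1)/(n-1) \ge 0$, so the prefactor $F^{q(p-1)-1}(\nabla u)$ in $X$ remains bounded near critical points; however, the second-derivative factors appearing in $X$ are not a priori controlled across ${\rm Crit}(u)$, where only $C^{1,\alpha}$ regularity of $u$ is available. The plan is to work with $X$ in the equivalent form \eqref{eqn:4.3'}, exploiting the divergence-free identity $\partial_j S_2^{ij}(W)=0$ of \eqref{eqn:2.10} to recast the boundary flux through $\partial U_\eps$ so that only $C^{1,\alpha}$-information on $u$ together with a uniform lower bound of $u$ on the compact set ${\rm Crit}(u)$ is required. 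Combining these bounds with the measure-theoretic estimate $|\partial U_\eps| \to 0$ as $\eps \to 0^+$ should yield vanishing in the limit, along the lines of the isotropic argument of \cite{AFM1}.

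For the final assertion $\Phi'_{p,q}(1)\le 0$, take $s=1\le\bar s$ in the inequality proved above to obtain $\Phi'_{p,q}(1)\le \Phi'_{p,q}(S)$ for every $S\ge\bar S$. For $\bar S \le S_1 \le S_2$, the region $\{S_2^{-1}\le u\le S_1^{-1}\}$ contains no critical points, so the divergence-theorem argument applies with no excision and shows that $\Phi'_{p,q}$ is non-decreasing on $[\bar S,\infty)$. Since Lemma \ref{lem:4.0} gives a finite positive limit for $\Phi_{p,q}$ at $+\infty$, a non-decreasing $\Phi'_{p,q}$ with positive tail value would force $\Phi_{p,q}$ to diverge; hence $\lim_{S\to\infty}\Phi'_{p,q}(S)\le 0$, and sending $S\to\infty$ in $\Phi'_{p,q}(1)\le \Phi'_{p,q}(S)$ concludes the proof.
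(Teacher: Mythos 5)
Your overall framework (apply the divergence theorem between the two level sets, use ${\rm div}\,X\le 0$ from Lemma \ref{lem:4.2}, then get $\Phi'_{p,q}(1)\le 0$ by a boundedness-versus-monotonicity argument using Lemma \ref{lem:4.0}) matches the paper, and your treatment of the final assertion is essentially the paper's. But the central step --- dealing with ${\rm Crit}(u)$ --- is left as a plan, and the plan you sketch is not viable. You excise a metric tubular neighborhood $U_\eps$ of ${\rm Crit}(u)$ and hope that $\int_{\partial U_\eps}\langle X,\nu_\eps\rangle\,d\sigma\to 0$ because ``$|\partial U_\eps|\to 0$'' and because the prefactor $F^{q(p-1)-1}(\nabla u)$ is bounded. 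Neither ingredient is available: for anisotropic $p$-harmonic functions with $p\neq 2$ there is no structure theorem for ${\rm Crit}(u)$ giving decay of the $(n-1)$-measure of tube boundaries (that kind of argument works for harmonic functions, where the critical set has codimension-two structure, but not here --- and contrary to your attribution, \cite{AFM1} does \emph{not} argue this way in the $p$-capacitary case); and the integrand of the flux contains second derivatives of $u$ (through $W$, resp. $a_{jk,p}F^{1-p}F_iu_{ik}$), which are not bounded near ${\rm Crit}(u)$ under the mere $C^{1,\alpha}$ regularity you invoke. The identity $\partial_j S^{ij}_2(W)=0$ of \eqref{eqn:2.10} is an interior divergence structure; it does not remove the second-derivative terms from the boundary integrand on $\partial U_\eps$, so it cannot ``recast'' the flux into something controlled by $C^{1,\alpha}$ data.

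The paper's device, which is the genuinely missing idea in your proposal, avoids any estimate on the size of the critical set. One multiplies $X$ by $\chi(w)$, where $w:=u^{-(q-1)p^*}F^{(q-1)(p-1)}(\nabla u)$ vanishes on ${\rm Crit}(u)$ and $\chi$ is the nondecreasing cutoff \eqref{eqn:4.7}, and applies the divergence theorem to $\tilde X=\chi(w)X$ on the whole region $\{1/S<u<1/s\}$ (the ``tubes'' $U_\delta$ are sublevel sets of $w$, not metric neighborhoods). This produces, besides $\chi(w)\,{\rm div}X\le 0$, the extra term $\chi'(w)\,\Theta$ with $\Theta=\langle X,\nabla w\rangle$, and the crux of the proof is the pointwise computation \eqref{eqn:4.9} showing that $\Theta\le 0$: it organizes into $-\,(q-1)^2u^{-2(q-1)p^*+2}F^{(2q-1)(p-1)-1}\bigl\{\bigl|H_F-p^*\tfrac{F}{u}\bigr|^2+(p-1)F^{-2}|\nabla^T(F(\nabla u))|^2_{a_F}\bigr\}$, using \eqref{h-f-2} and \eqref{eqn:2.04}. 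With both contributions signed, no limiting flux estimate is needed at all. Since this sign computation is absent from your proposal and your substitute step would fail, the proof as proposed has a genuine gap.
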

\begin{remark} The existence of $\bar s$  and $\bar S$ follows from Remark \ref{rem:3.1} and Proposition \ref{lem:2.1}.
\end{remark}
\begin{proof}
Fix $s$ and $S$ such that $1 \leq s \leq \bar s \leq \bar S \leq S$. For given $\varepsilon>0$, let  $\chi:[0,+\infty)\rightarrow\mathbb R$ be a smooth nonnegative cut-off-function,
such that
\begin{equation}\label{eqn:4.7}
\left\{
\begin{aligned}
\chi(t)=&0\ \ {\rm in} \ \ t<\frac12{\varepsilon},\\
{\chi}'(t)\geq&0\ \ {\rm in} \ \ \frac12{\varepsilon}\leq t\leq\frac32\varepsilon,\\
\chi(t)=&1\ \ {\rm in}\ \ t>\frac32\varepsilon.
\end{aligned}\right.
\end{equation}
Define a smooth vector field
\begin{equation*}
\tilde X=\chi\big(u^{-{(q-1)p^*}}F^{(q-1)(p-1)}(\nabla u)\big)X.
\end{equation*}
Since $F(\nabla u)=0$ on Crit$\,(u)$, we have  $X=0\hbox{ in }{\rm Crit}(u)$.
By choosing $\varepsilon$ small enough, we can make sure that
$$\tilde{X}=X\hbox{ on }\{u=s^{-1}\}\hbox{ and }
\{u=S^{-1}\},$$ since ${\rm Crit}(u)\subset\{\bar S^{-1} < u < \bar s^{-1}\}\subset \{S^{-1}<u<s^{-1}\}$.
Denote
\begin{equation*}
\begin{aligned}
 \Theta:=&\langle X,\nabla (u^{-{(q-1)p^*}}F^{(q-1)(p-1)}(\nabla u))\rangle\\
 =&(q-1)(p-1)u^{-{(q-1)p^*}}F^{(q-1)(p-1)-1}\Big\langle X,\nabla\big( F(\nabla u)\big)-\frac{n-1}{n-p}\frac{\nabla u}{u} F\Big\rangle.
 \end{aligned}
\end{equation*}
Then we have the following computations
\begin{equation}\label{eqn:4.8}
\begin{aligned}
&\int\limits_{\{u=1/s\}}\langle X,\frac{\nabla u}{|\nabla u|}\rangle d\sigma-\int\limits_{\{u=1/S\}}\langle X,\frac{\nabla u}{|\nabla u|}\rangle d\sigma=\int\limits_{\{1/S<u<1/s\}}{\rm div}{\tilde X}dx\\
=&\int\limits_{\{1/S<u<1/s\}\setminus U_{\varepsilon/2}}\chi{\rm div} Xdx+\int\limits_{U_{3\varepsilon/2}\setminus U_{\varepsilon/2}}{\chi}'\Theta dx.
\end{aligned}
\end{equation}
where in the last identity we have used the tubular neighborhood of Crit$\,(u)$ defined for every
$\delta> 0$ as $U_{\delta}=\{u^{-{(q-1)p^*}}F^{(q-1)(p-1)}(\nabla u) \leq\delta\}$.

From Lemma \ref{eqn:4.2}, we can see that the first term  on the right hand side of \eqref{eqn:4.8} is non-positive, we next prove that $\Theta\leq0$. By using \eqref{eqn:4.3}, \eqref{h-f-2} and \eqref{eqn:2.04}, we have that
\begin{equation}\label{eqn:4.9}
  \begin{aligned}
 \Theta
=&-(q-1)^2(p-1)u^{-2{(q-1)p^*}+2}F^{(2q-1)(p-1)-2}\times\\& \times\Big\{\frac{a_{jk,p}F_iu_{ik}}{F}F^{2-p}
-{p^*}\frac{FF_j}{u}\Big\}
\Big(F_lu_{lj}-\frac{(n-1)}{n-p}\frac{F}{u}u_j\Big)\\
=&-(q-1)^2(p-1)u^{-2{(q-1)p^*}+2}F^{(2q-1)(p-1)-1}\times\\& \times\Big\{\frac{a_{jk,p}F_iu_{ik}F_lu_{lj}}{F^p(\nabla u)}
+\frac{(n-1)^2(p-1)}{(n-p)^2}\frac{F^2}{u^2}
-2{p^*}\frac{F_kF_iu_{ik}}{u}\Big\}\\
=&-(q-1)^2u^{-2{(q-1)p^*}+2}F^{(2q-1)(p-1)-1}\times\\& \times\Big\{\Big|H_F-{p^*}\frac{F}{u}\Big|^2+(p-1)\frac{|\nabla^T(F(\nabla u))|^2_{a_F}}{F^2}\Big\}\leq0.
  \end{aligned}
\end{equation}
This completes the proof of \eqref{eqn:4.6}.

It follows  from \eqref{eqn:4.6} that, for every $S\geq \bar S$,
\begin{equation*}
\Phi'_{p,q}(1)\leq \Phi'_{p,q}(S).
\end{equation*}
Integrating both sides of the above inequality on an interval of the form $(\bar S,S)$, we obtain
\begin{equation*}
\Phi'_{p,q}(1)(S-\bar S)+\Phi_{p,q}(\bar S)\leq \Phi_{p,q}(S)
\end{equation*}
If by contradiction, $\Phi'_{p,q}(1)>0$, then, letting $S\rightarrow+\infty$ above inequality, we would deduce
$\lim_{S\rightarrow+\infty}\Phi_{p,q}(S)\rightarrow+\infty$  against the boundedness of $\Phi_{p,q}$ by Lemma \ref{lem:4.0}.
\end{proof}

\subsection{Second effective inequality}$\Phi_{p,q}(\infty)\leq \Phi_{p,q}(1)$.\

%
For a given $(p,q)\in\Lambda$ and a given $0 < \lambda < 1$, we
consider the vector field
\begin{equation}\label{eqn:4.10}
Y_{\lambda}=(u^{-1}-\lambda)X-F^{q(p-1)}u^{-(q-1)p^*}\n_{\xi}F(\n u),
\end{equation}
%
 It is convenient to observe that at a regular value of
$u$ it holds
\begin{equation}\label{eqn:4.11}
(\tau-\lambda)\Phi'_{p,q}(\tau)-\Phi_{p,q}(\tau)=\int\limits_{\{u=1/\tau\}}\Big\langle Y_{\lambda},\frac{\nabla u}{|\nabla u|}\Big\rangle
\end{equation}

Next we compute the divergence of $Y_{\lambda}$.
\begin{lemma}[Divergence of $Y_{\lambda}$]\label{lem:4.3}
For any $(p,q)\in\Lambda $ and any $0 < \lambda< 1$, let $u$ be the solution of \eqref{eqn:1.1} and $Y_{\lambda}$
be the vector field defined in \eqref{eqn:4.10}. Then, the following identity holds at any point $x\in\mathbb R^n\setminus\bar{\Omega}$ such that $F(\nabla u)\not=0$
\begin{equation*}
{\rm div}{Y_{\lambda}}=(u^{-1}-\lambda){\rm div}X\leq 0,
\end{equation*}
where ${\rm div}X$ is non-positive defined in \eqref{eqn:4.5}
\end{lemma}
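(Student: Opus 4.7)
The plan is to expand $\mathrm{div}\,Y_\lambda$ by the product rule and show that the extra contributions beyond $(u^{-1}-\lambda)\mathrm{div}\,X$ cancel identically, leaving the sign to be read off from Lemma \ref{lem:4.2}.

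Concretely, I would first write
\begin{equation*}
\mathrm{div}\,Y_\lambda = \langle \nabla(u^{-1}-\lambda),X\rangle + (u^{-1}-\lambda)\,\mathrm{div}\,X - \mathrm{div}\bigl(F^{q(p-1)}u^{-(q-1)p^*}\nabla_\xi F(\nabla u)\bigr),
\end{equation*}
so the task reduces to proving
\begin{equation*}
-u^{-2}\langle X,\nabla u\rangle \;=\; \mathrm{div}\bigl(F^{q(p-1)}u^{-(q-1)p^*}\nabla_\xi F(\nabla u)\bigr).
\end{equation*}
For the left-hand side I would quote the identity for $\langle X,\nabla u\rangle$ already computed in the proof of Lemma \ref{lem:4.1}, namely $\langle X,\nabla u\rangle = -(q-1)u^{-(q-1)p^*+2}F^{q(p-1)-1}\bigl((p-1)F_iF_ku_{ik}-p^*F^2/u\bigr)$, and then apply $(p-1)F_iF_ku_{ik}=F\,H_F$ from Proposition \ref{prop:2.2'} to rewrite it as $(q-1)u^{-(q-1)p^*}F^{q(p-1)}\bigl(H_F-p^*F/u\bigr)$ after multiplication by $-u^{-2}$.

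For the right-hand side I would expand the divergence term by term using $\partial_j F(\nabla u)=F_k(\nabla u)u_{kj}$, together with the homogeneity identity $F_j u_j=F(\nabla u)$ from Proposition \ref{prop:2.1}(1), the trace relation $F_{jk}u_{kj}=-H_F$ from \eqref{h-f}, and once again $F_kF_j u_{kj}=FH_F/(p-1)$. Collecting the three resulting terms yields
\begin{equation*}
\mathrm{div}\bigl(F^{q(p-1)}u^{-(q-1)p^*}F_j(\nabla u)\bigr) = (q-1)u^{-(q-1)p^*}F^{q(p-1)}\Bigl(H_F-p^*\tfrac{F}{u}\Bigr),
\end{equation*}
matching the expression from the previous paragraph exactly. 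This establishes the algebraic identity $\mathrm{div}\,Y_\lambda=(u^{-1}-\lambda)\mathrm{div}\,X$.

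Finally, for the sign: by the maximum principle applied to \eqref{eqn:1.1}, $0<u\le 1$ on $\mathbb{R}^n\setminus\bar\Omega$, so $u^{-1}\ge 1>\lambda$ and $u^{-1}-\lambda>0$. Combined with $\mathrm{div}\,X\le 0$ from \eqref{eqn:4.5}, this forces $\mathrm{div}\,Y_\lambda\le 0$ wherever $F(\nabla u)\ne 0$. The only real obstacle is bookkeeping — making sure each homogeneity identity is applied on the correct slot, since several competing factors of $F$, $F_j$, $u_j$, and $u_{ij}$ must be tracked through the product rule. Once the two candidate expressions are reduced to a common $H_F - p^*F/u$ factor, the cancellation is immediate and the conclusion follows.
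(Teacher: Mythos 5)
Your proposal is correct and takes essentially the same route as the paper: expand $\mathrm{div}\,Y_\lambda$ by the product rule and verify that $-u^{-2}\langle X,\nabla u\rangle$ exactly cancels $\mathrm{div}\bigl(F^{q(p-1)}u^{-(q-1)p^*}F_{\xi}(\nabla u)\bigr)$, with the sign then coming from $0<u\leq 1$, $0<\lambda<1$ and Lemma \ref{lem:4.2}. The only cosmetic difference is that the paper evaluates the second divergence by factoring out the divergence-free field $F^{p-1}F_\xi(\nabla u)$, whereas you expand it directly and invoke \eqref{h-f} and \eqref{h-f-2}, which encodes the same use of $\Delta_{F,p}u=0$.
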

\begin{proof}
By the very definition of $Y_{\lambda}$, we obtain that
\begin{equation*}
  \begin{aligned}
  {\rm div}Y_{\lambda}=(u^{-1}-\lambda){\rm div}X-u^{-2}\langle X,\,\nabla u\rangle-{\rm div}\big(F^{q(p-1)}u^{-(q-1)p^*} F_{\xi}(\nabla u)\big)
  \end{aligned}
\end{equation*}
By using the definition of $X$, we compute
\begin{equation*}
\begin{aligned}
u^{-2}\langle X,\,\nabla u\rangle=&-(q-1)u^{-{(q-1)p^*}}F^{q(p-1)-1}\left((p-1)F_kF_iu_{ik}-{p^*}\frac{F^2}{u}\right).
\end{aligned}
\end{equation*}
By using $\Delta_{F,p}u={\rm div}(F^{p-1}F_\xi(\nabla u))=0$, we get
\begin{equation*}
\begin{aligned}
&{\rm div}\big(F^{q(p-1)}u^{-(q-1)p^*} F_{\xi}(\nabla u)\big)=(q-1)u^{-{(q-1)p^*}}F^{q(p-1)-1}
\left((p-1)F_kF_iu_{ik}
-{p^*}\frac{F^2}{u}\right).
\end{aligned}
\end{equation*}
The assertion follows.
\end{proof}
%
%
%

\begin{theorem}[Effective Monotonicity Formula--II]\label{thm:4.2}
For any $1 < p < n$, let $u$ be the
solutions of \eqref{eqn:1.1}  and let $1< \bar s < \bar S <+\infty$
be such that ${\rm Crit}(u)\subset\{\bar S^{-1} < u < \bar s^{-1}\}$. Then, for every $1 \leq s \leq \bar s \leq \bar S \leq S$,
the inequality
\begin{equation}\label{eqn:4.12}
(s-\lambda)\Phi'_{p,q}(s)-\Phi_{p,q}(s)\leq (S-\lambda)\Phi'_{p,q}(S)-\Phi_{p,q}(S),
\end{equation}
holds true. Moreover, one has that $\Phi_{p,q}(+\infty)\leq \Phi_{p,q}(1)$.
\end{theorem}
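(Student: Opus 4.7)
The plan is to parallel the proof of Theorem~\ref{thm:4.1}, with the vector field $Y_\lambda$ playing the role of $X$. First I would verify the pointwise identity \eqref{eqn:4.11} at any regular value $\tau$ of $u$: by Lemma~\ref{lem:4.1}, $\Phi'_{p,q}(\tau)=\int_{\{u=1/\tau\}}\langle X,\nabla u/|\nabla u|\rangle\,d\sigma$, while on the same level set $u^{-1}=\tau$, and property (1) of Proposition~\ref{prop:2.1} combined with the evenness of $F_\xi$ yields $\langle \nabla_\xi F(\nabla u),\nabla u/|\nabla u|\rangle=F(\nu)$. Plugging these into the definition \eqref{eqn:4.10} immediately reproduces $(\tau-\lambda)\Phi'_{p,q}(\tau)-\Phi_{p,q}(\tau)$.

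To prove \eqref{eqn:4.12}, fix $1\le s\le \bar s\le \bar S\le S$ and reuse the cut-off of Theorem~\ref{thm:4.1}: take $\chi$ as in \eqref{eqn:4.7}, form the smooth vector field
\begin{equation*}
\tilde Y_\lambda:=\chi\bigl(u^{-(q-1)p^*}F^{(q-1)(p-1)}(\nabla u)\bigr)\,Y_\lambda,
\end{equation*}
and apply the divergence theorem on $\{1/S<u<1/s\}$. Since ${\rm Crit}(u)\subset\{\bar S^{-1}<u<\bar s^{-1}\}$, for $\varepsilon$ small enough $\chi\equiv 1$ on both boundary level sets, so by \eqref{eqn:4.11} the boundary integrals combine into
\begin{equation*}
\bigl[(s-\lambda)\Phi'_{p,q}(s)-\Phi_{p,q}(s)\bigr]-\bigl[(S-\lambda)\Phi'_{p,q}(S)-\Phi_{p,q}(S)\bigr]=\int_{\{1/S<u<1/s\}}{\rm div}\,\tilde Y_\lambda\,dx.
\end{equation*}
Splitting the right-hand side into $\int\chi\,{\rm div}\,Y_\lambda\,dx$ and the correction $\int\chi'(w)\langle Y_\lambda,\nabla w\rangle\,dx$ with $w=u^{-(q-1)p^*}F^{(q-1)(p-1)}$, the first piece is pointwise $\le 0$: the comparison principle gives $0<u<1$ in $\mathbb R^n\setminus\bar\Omega$, hence $u^{-1}-\lambda\ge 1-\lambda>0$, so Lemma~\ref{lem:4.3} together with Lemma~\ref{lem:4.2} makes $\chi\,{\rm div}\,Y_\lambda\le 0$.

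The main obstacle is the cut-off correction. Unlike the analogous $\Theta$ in Theorem~\ref{thm:4.1}, which carried the definite sign \eqref{eqn:4.9}, the relevant quantity expands as
\begin{equation*}
\langle Y_\lambda,\nabla w\rangle=(u^{-1}-\lambda)\Theta-(q-1)u^{-2(q-1)p^*}F^{(2q-1)(p-1)}\Big[H_F-p^*\frac{F(\nabla u)}{u}\Big],
\end{equation*}
whose last term has no fixed sign. I would handle it by letting $\varepsilon\to 0^+$: the support $U_{3\varepsilon/2}\setminus U_{\varepsilon/2}$ shrinks to ${\rm Crit}(u)$, on which $F(\nabla u)$ vanishes; since both $\Theta$ and the remainder carry strictly positive powers of $F(\nabla u)$, the standard estimate $|\chi'|\le C/\varepsilon$ together with a measure bound on the tubular annulus shows the correction is $o(1)$ as $\varepsilon\to 0^+$. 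This yields \eqref{eqn:4.12}.

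Finally, for $\Phi_{p,q}(+\infty)\le \Phi_{p,q}(1)$, I would specialize \eqref{eqn:4.12} to $s=1$ and let $S\to+\infty$. Proposition~\ref{lem:2.1} tells us that $\{u=1/S\}$ is asymptotically a Wulff sphere on which $\nabla u\sim \gamma\,\nabla\Gamma_{F,p}$; this fundamental-solution model makes $\Phi_{p,q}$ approach the limiting constant of Lemma~\ref{lem:4.0} while $(S-\lambda)\Phi'_{p,q}(S)\to 0$, so
\begin{equation*}
(1-\lambda)\Phi'_{p,q}(1)-\Phi_{p,q}(1)\le -\Phi_{p,q}(+\infty).
\end{equation*}
Since Theorem~\ref{thm:4.1} ensures $\Phi'_{p,q}(1)\le 0$, sending $\lambda\to 1^-$ drives $(1-\lambda)\Phi'_{p,q}(1)\to 0$, giving $\Phi_{p,q}(+\infty)\le \Phi_{p,q}(1)$ as required.
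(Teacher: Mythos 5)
Your setup is right: identity \eqref{eqn:4.11} is verified correctly, the splitting of ${\rm div}\,\tilde Y_\lambda$ into $\chi\,{\rm div}\,Y_\lambda$ plus a cut-off correction is the right structure, and your expansion of $\langle Y_\lambda,\nabla w\rangle$ with $w=u^{-(q-1)p^*}F^{(q-1)(p-1)}(\nabla u)$ is accurate. But the heart of the matter is exactly the term you flag as "the main obstacle", and your way of dispatching it has a genuine gap. You keep the cut-off argument from Theorem \ref{thm:4.1} and claim the correction $\int\chi'(w)\langle Y_\lambda,\nabla w\rangle\,dx$ is $o(1)$ as $\varepsilon\to0^+$ because $|\chi'|\le C/\varepsilon$, the annulus shrinks, and "positive powers of $F(\nabla u)$" appear. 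This does not work as stated: after absorbing $\chi'(w)\,w\le C$, the signless piece of the integrand is comparable to $u^{-(q-1)p^*}F^{q(p-1)-1}\,F_iF_ju_{ij}$, i.e.\ it contains second derivatives of $u$, and near ${\rm Crit}(u)$ the solution is only $C^{1,\alpha}$, so there is no pointwise bound on $F^{q(p-1)-1}|D^2u|$; a vanishing-measure argument needs either uniform boundedness or a weighted second-order integrability estimate (Uhlenbeck-type $L^2$ bounds for $F^{p-2}D^2u$, say), neither of which you invoke. The paper avoids this issue entirely by a different, and crucial, choice: the cut-off is evaluated at $\eta_\lambda(u)\,u^{-(q-1)p^*}F^{(q-1)(p-1)}(\nabla u)$ with $\eta_\lambda(u)=(u^{-1}-\lambda)^{-1}$, and the extra term produced by differentiating $\eta_\lambda(u)$ combines with your signless cross term into the perfect square $\bigl[(q-1)u\bigl(H_F-p^*\tfrac{F}{u}\bigr)+\eta_\lambda(u)\tfrac{F}{u}\bigr]^2$, making the correction $\Psi$ pointwise nonpositive. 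That algebraic device is the missing idea; without it (or without a genuine weighted Hessian estimate near the critical set), your proof of \eqref{eqn:4.12} is incomplete.

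There is also a secondary, more fixable gap in the last step: you assert $(S-\lambda)\Phi'_{p,q}(S)\to0$ from the asymptotics of Proposition \ref{lem:2.1}, but $\Phi'_{p,q}(S)$ involves second derivatives of $u$ (through $F_iF_ku_{ik}$ in \eqref{eqn:3.3}), and the paper only provides $C^1$ asymptotics, so this decay is not justified. The paper instead observes that \eqref{eqn:4.6} holds for all $s>\bar S$ (no critical points there), hence $\Phi'_{p,q}(S)\le0$ for $S>\bar S$, so $(S-\lambda)\Phi'_{p,q}(S)\le0$ and one passes to the inferior limit; combined with $\lambda\to1^-$ this gives $\Phi_{p,q}(+\infty)\le\Phi_{p,q}(1)$ without any second-order decay. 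You should replace your limit claim by this sign argument.
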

\begin{proof}
Fix $s$ and $S$ such that $1 \leq s \leq \bar s \leq \bar S \leq S$. Let $\chi:[1,+\infty)\rightarrow\mathbb R$ be the same smooth nonnegative cut-off function as in the proof of
Theorem \ref{thm:4.1}, so that the properties  \eqref{eqn:4.7} holds. To simplify the notation, let us also set
\begin{equation*}
\eta_{\lambda}(u)=(u^{-1}-\lambda)^{-1}.
\end{equation*}
Then, let us consider the smooth vector field
\begin{equation*}
  \tilde Y_{\lambda}=\chi\Big(\eta_{\lambda}(u)u^{-{(q-1)p^*}}F^{(q-1)(p-1)}(\nabla u)\Big)Y_{\lambda},
\end{equation*}
where $Y_{\lambda}$ has been defined in \eqref{eqn:4.9}. Again, choosing $\varepsilon$ small enough, we can suppose $\tilde Y_{\lambda} = Y_{\lambda}$ on $\{u = 1/s\}$ and $\{u = 1/S\}$, with $s$ and $S$ as in the statement.

Denote
\begin{equation*}
\Psi:=\Big\langle \nabla\Big(\eta_{\lambda}(u)u^{-{(q-1)p^*}}F^{(q-1)(p-1)}(\nabla u)\Big), Y_{\lambda}\Big\rangle.
\end{equation*}

 By applying the divergence theorem to the smooth vector field $\tilde Y_{\lambda}$ on the region $\{1/S <u< 1/s\}$ gives
 \begin{equation*}
 \begin{aligned}
&\int\limits_{\{u=\frac1s\}}\langle Y_{\lambda},\, \frac{\nabla u}{|\nabla u|}\rangle d\sigma-\int\limits_{\{u=\frac1S\}}\langle Y_{\lambda},\, \frac{\nabla u}{|\nabla u|}\rangle d\sigma=\int\limits_{\{\frac1S<u<\frac1s\}}  {\rm div}\tilde Y_{\lambda}dx\\
&=\int\limits_{\{\frac1S<u<\frac1s\}\setminus U_{\varepsilon/2}}\chi\Big(\eta_{\lambda}(u)u^{-{(q-1)p^*}}F^{(q-1)(p-1)}(\nabla u)\Big){\rm div}Y_{\lambda}dx+\int\limits_{U_{3\varepsilon/2}\setminus U_{\varepsilon/2}}\chi' \Psi dx
\end{aligned}
 \end{equation*}
 where this time the tubular neighborhoods of ${\rm Crit}(u)$ are defined, for every $\delta > 0,$ as $U_{\delta} =
\{ \eta_{\lambda}(u)u^{-{(q-1)p^*}}F^{(q-1)(p-1)}(\nabla u)\leq\delta\}$. Since, as observed in Lemma \ref{lem:4.3}, the divergence of $Y_{\lambda}$ is non-positive on $\{1/S \leq u \leq 1/s\}\setminus U_{\varepsilon/2}$, where clearly $F(\nabla u)\not=0$. Next we prove
$\Psi$ is non-positive on $U_{3\varepsilon/2}\setminus U_{\varepsilon/2}$.

Noticing that
\begin{equation}\label{eqn:4.13}
\begin{aligned}
&\nabla \Big(\eta_{\lambda}(u)u^{-{(q-1)p^*}}F^{(q-1)(p-1)}(\nabla u)\Big)\\
=&u^{-2-{(q-1)p^*}}\eta_{\lambda}^2(u)F^{(q-1)(p-1)}\nabla u+(q-1)(p-1)\eta_{\lambda}\times\\
&\times u^{-{(q-1)p^*}}F^{(q-1)(p-1)-1}\Big(\nabla( F(\nabla u))-\frac{n-1}{n-p}\frac{F}{u}\nabla u\Big)
\end{aligned}
\end{equation}
By using Proposition \ref{prop:2.2'}, \eqref{eqn:4.9} and \eqref{eqn:4.13}, a direct computation gives
\begin{equation*}
\begin{aligned}
\Psi=&\Theta-2(q-1)\eta_{\lambda}(u)u^{-2{(q-1)p^*}}F^{(2q-1)(p-1)-1}\Big((p-1)F_iF_ku_{ik}-p^*\frac{F^2}{u}\Big)\\
&-\eta^2_{\lambda}(u)u^{-2{(q-1)p^*}}F^{(2q-1)(p-1)-1}\frac{F^2}{u^2}\\
=&-u^{-2{(q-1)p^*}}F^{(2q-1)(p-1)-1}\Big\{\Big[(q-1)u\Big(H_F-{p^*}\frac{F}{u}\Big)+\eta_{\lambda}(u)\frac{F}{u}\Big]^2+\\&+(q-1)^2(p-1)\frac{u^2}{F^2}|\nabla^T(F(\nabla u))|^2_{a_F}\Big\}
\leq 0,
\end{aligned}
\end{equation*}
%
%
This completes the proof of \eqref{eqn:4.12}.

It remains to show that $\lim_{\tau\rightarrow+\infty}\Phi_{p,q}(\tau)\leq \Phi_{p,q}(1)$.  Applying the inequality \eqref{eqn:4.12}
with $0<\lambda<1$, $s=1$ and $S\ge \bar S$, we get
\begin{equation*}
\Phi_{p,q}(S)-\Phi_{p,q}(1)\leq (S-\lambda)\Phi_{p,q}'(S)-(1-\lambda) \Phi_{p,q}'(1),
\end{equation*}

Observe now that \eqref{eqn:4.6} holds also for $\bar S< s < S$, because that ${\rm Crit}(u)\cap [\bar S, S]=\emptyset$. Then, the same argument in Theorem \ref{thm:4.1} to deduce that
$\Phi_{p,q}'(1)\leq0$ gives also $\Phi_{p,q}'(s)\leq0$ for any $s>\bar S$.
In particular, $\Phi_{p,q}'$ is a definitely bounded monotone function, and this
implies $\lim {\rm inf}_{S\rightarrow+\infty}\Phi_{p,q}'(S)\leq 0$. Hence, passing to the inferior limit as $S\rightarrow+\infty$ in the above inequality yields
\begin{equation*}
 \lim_{S\rightarrow+\infty} \Phi_{p,q}(S)-\Phi_{p,q}(1)\leq -(1- \lambda) \Phi_{p,q}'(1).
\end{equation*}
Letting $\lambda\rightarrow1^-$ on the right hand side leads to the second effective inequality $\lim_{S\rightarrow+\infty} \Phi_{p,q}(S)\leq \Phi_{p,q}(1)$.
\end{proof}

\subsection{$L^p$ anisotropic Minkowski inequality}
We are ready to prove the following geometric inequalities between anisotropic $p$-capacity and total anisotropic mean curvatures.
\begin{theorem}\label{thm:5.3}
Let $\Omega\subset\mathbb R^n$ be a bounded  domain with smooth boundary. Let $(p,q)\in\Lambda$. 
Then
\begin{equation}\label{eqn:4.09}
 \left(  \int_{\partial\Omega}\big|\frac{H_F}{n-1}\big|^{q(p-1)} F(\nu)d\sigma \right)^{\frac1q} \ge \big(\frac{p-1}{n-p}\big)^{p-1} \frac{{\rm Cap}_{F,p}(\O)}{|\p \Omega|_F^{1-\frac1q}}.
\end{equation}
Moreover, equality holds in \eqref{eqn:4.09} if only if $\Omega$ is a Wulff  ball.
\end{theorem}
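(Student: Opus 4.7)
The plan is to deduce \eqref{eqn:4.09} from the effective monotonicity $\Phi'_{p,q}(1)\le 0$ of Theorem \ref{thm:4.1}, combined with two Hölder inequalities on $\partial\Omega$ with measure $F(\nu)d\sigma$. The full monotonicity of Theorem \ref{thm:4.2} is not needed here, because the factor $|\partial\Omega|_F$ appearing in \eqref{eqn:4.09} arises naturally from Hölder rather than from the asymptotic value $\Phi_{p,q}(+\infty)$.

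The first step is to rewrite $\Phi'_{p,q}(1)\le 0$ as a weighted curvature inequality on $\partial\Omega$. By Lemma \ref{lem:4.1}, $\Phi'_{p,q}(1)=\int_{\partial\Omega}\langle X,\nabla u/|\nabla u|\rangle\,d\sigma$. On $\partial\Omega$ the level set condition forces $\nabla u\parallel\nu$ with $\nu=-\nabla u/|\nabla u|$, so $F(\nabla u)=|\nabla u|F(\nu)$. Using $a_{jk,p}(\nabla u)u_j=(p-1)F^{p-1}F_k$ (a direct consequence of the $1$-homogeneity of $F$, which gives $F_{ij}u_j=0$) together with Proposition \ref{prop:2.2'} (which under $\Delta_{F,p}u=0$ asserts $(p-1)F_iF_j u_{ij}=F(\nabla u)H_F$), the vector field $X$ of \eqref{eqn:4.3} simplifies on $\partial\Omega$ to
\[
\langle X,\nabla u\rangle\big|_{\partial\Omega}=-(q-1)F^{q(p-1)}(\nabla u)\bigl(H_F-p^{*}F(\nabla u)\bigr).
\]
Converting $d\sigma/|\nabla u|$ into $F(\nu)d\sigma/F(\nabla u)$ and using $q>1$, the inequality $\Phi'_{p,q}(1)\le 0$ translates into
\[
\int_{\partial\Omega}F^{q(p-1)-1}(\nabla u)\,H_F\,F(\nu)\,d\sigma\ \ge\ p^{*}\,\Phi_{p,q}(1).
\]

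Next, I apply Hölder's inequality on $\partial\Omega$ with exponents $\tfrac{q(p-1)}{q(p-1)-1}$ and $q(p-1)$ to the left-hand side of the displayed inequality. After rearrangement this yields
\[
(p^{*})^{q(p-1)}\Phi_{p,q}(1)\ \le\ \int_{\partial\Omega}|H_F|^{q(p-1)}F(\nu)\,d\sigma.
\]
In parallel, Proposition \ref{lem:2.2} gives ${\rm Cap}_{F,p}(\Omega)=\int_{\partial\Omega}F^{p-1}(\nabla u)F(\nu)\,d\sigma$, and another application of Hölder with exponents $q$ and $q/(q-1)$ produces
\[
\Phi_{p,q}(1)\ \ge\ \frac{{\rm Cap}_{F,p}(\Omega)^{q}}{|\partial\Omega|_F^{q-1}}.
\]
Chaining these two bounds, substituting $p^{*}=(n-1)(p-1)/(n-p)$, dividing by $(n-1)^{q(p-1)}$, and extracting the $q$-th root delivers exactly \eqref{eqn:4.09}.

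For the equality case, equality in the first Hölder forces $|H_F|$ to be proportional to $F^{q(p-1)-1}(\nabla u)$ on $\partial\Omega$, while equality in the second forces $F(\nabla u)$ to be constant on $\partial\Omega$; together $H_F$ must be a (nonnegative) constant on $\partial\Omega$. An anisotropic Alexandrov-type rigidity theorem for closed embedded hypersurfaces of constant anisotropic mean curvature then identifies $\partial\Omega$ as a Wulff ball, and the converse is a direct check on the explicit fundamental solution $\Gamma_{F,p}$. The main obstacle I anticipate is the algebraic collapse in the first step: converting the gradient-type boundary integral into the clean curvature quantity $H_F-p^{*}F(\nabla u)$ depends crucially on the alignment $\nabla u\parallel\nu$ and on the $p$-harmonic identity of Proposition \ref{prop:2.2'}. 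A secondary subtlety lies in invoking the anisotropic Alexandrov rigidity for the equality analysis.
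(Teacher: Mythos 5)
Your derivation of the inequality is correct and is essentially the paper's own argument: you use the first effective inequality $\Phi_{p,q}'(1)\le 0$ of Theorem \ref{thm:4.1}, evaluate it on $\partial\Omega$ via Proposition \ref{prop:2.2'} (exactly reproducing the paper's formula \eqref{eqn:3.3} at $\tau=1$), and then chain the same two H\"older inequalities, the second combined with Proposition \ref{lem:2.2}; as you correctly anticipated, Theorem \ref{thm:4.2} is not needed for this statement. Where you genuinely diverge is the equality case. The paper deduces from equality that $\Phi'_{p,q}(1)=0$, hence (via the monotonicity and boundedness of $\Phi_{p,q}$) that ${\rm div}\,X=0$ near $\partial\Omega$, and then reads off from \eqref{eqn:4.5} that the anisotropic traceless second fundamental form vanishes, so $\partial\Omega$ is anisotropically umbilical and hence a Wulff shape — a self-contained pointwise argument. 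You instead extract from the H\"older equality conditions that $F(\nabla u)$ is constant on $\partial\Omega$ and (together with the sign condition coming from replacing $H_F$ by $|H_F|$) that $H_F$ is a positive constant, and then invoke an anisotropic Alexandrov rigidity theorem for embedded constant-anisotropic-mean-curvature hypersurfaces. This route is valid, but it imports a nontrivial external theorem (not cited in the paper and not needed by its argument), so you should state and reference it explicitly; also note the small slip that equality in your first H\"older forces $|H_F|$ proportional to $F(\nabla u)$ (not to $F^{q(p-1)-1}(\nabla u)$) — harmless here since $F(\nabla u)$ is constant by the other equality condition, but worth correcting.
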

\begin{proof}
 Recall the formula \eqref{eqn:3.3} and also the identity \eqref{h-f}. Thus the first effective inequality $\Phi_{p, q}'(1)\le 0$
implies that
\begin{equation*}
  0\leq\int\limits_{\partial\Omega}F^{q(p-1)-1}(\nabla u)\big(H_F-{p^*}F(\nabla u)\big)F(\nu)d\sigma,
\end{equation*}
and thus, by the H\"older inequality, one gets
\begin{equation}\label{eqn:5.10}
\int\limits_{\partial\Omega}F^{q(p-1)}(\nabla u)F(\nu)d\sigma\leq\Big(\frac{n-p}{p-1}\Big)^{q(p-1)}\int\limits_{\partial\Omega}\Big|\frac {H_F}{n-1}\Big|^{q(p-1)}F(\nu)d\sigma.
\end{equation}
On the other hand, by H\"older inequality, we obtain
\begin{equation*}
\int\limits_{\partial\Omega}F^{p-1}F(\nu)d\sigma\leq \Big(\int\limits_{\partial\Omega}F^{q(p-1)}F(\nu)d\sigma\Big)^{\frac1{q}}
 \Big(\int_{\partial\Omega}F(\nu)d\sigma\Big)^{\frac{q-1}{q}}.
\end{equation*}
This combining with \eqref{eqn:2.7}, \eqref{eqn:5.10} yields \eqref{eqn:4.09}.

Assume now that equality holds in \eqref{eqn:1.02},  Then, equality holds in \eqref{eqn:4.09}, and consequently
$\Phi'_{p,q}(1)=0$. it follows that ${\rm div}X=0$ along $\p \O$. From \eqref{eqn:4.5}, we see that
$$
\frac{n-2}{n-1}\s_1^2(\kappa_F)-2\s_2(\kappa_F)=0.
$$
Then $\partial\Omega$ is anisotropic umbilical, which yields that $\partial\Omega$ is of Wullf shape.
\end{proof}

\begin{theorem}\label{thm-pq}
Let $\Omega\subset\mathbb R^n$ be a bounded domain with smooth boundary. Let $(p,q)\in\Lambda$. 
 Then
\begin{equation}\label{eqn:5.11}
\frac1{\kappa_{n-1}}\int\limits_{\partial\Omega}\Big|\frac {H_F}{n-1}\Big|^{q(p-1)}F(\nu)d\sigma\ge \Big[\big(\frac{p-1}{n-p}\big)^{p-1}\frac{1}{\kappa_{n-1}}{\rm Cap}_{p,F}(\Omega)\Big]^{1-\frac{(q-1)(p-1)}{n-p}}.
\end{equation}
Equality holds in \eqref{eqn:5.11} if only if $\Omega$ is a Wullf ball.
\end{theorem}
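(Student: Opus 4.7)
\textbf{Proof proposal for Theorem \ref{thm-pq}.} The plan is to combine the two effective monotonicity statements (Theorems \ref{thm:4.1} and \ref{thm:4.2}) with the asymptotic evaluation (Lemma \ref{lem:4.0}), exactly as in Theorem \ref{thm:5.3} but using the endpoint comparison instead of the H\"older trick with $|\partial\Omega|_F$.

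First I would invoke the second effective monotonicity inequality from Theorem \ref{thm:4.2}, which yields
\begin{equation*}
\Phi_{p,q}(+\infty)\leq \Phi_{p,q}(1).
\end{equation*}
By Lemma \ref{lem:4.0} the left-hand side equals
$\bigl(\tfrac{n-p}{p-1}\bigr)^{(q-1)p^*}\kappa_{n-1}^{\frac{(q-1)(p-1)}{n-p}}\,\mathrm{Cap}_{F,p}(\Omega)^{1-\frac{(q-1)(p-1)}{n-p}}$, while the right-hand side is $\int_{\partial\Omega}F^{q(p-1)}(\nabla u)F(\nu)\,d\sigma$ since on $\{u=1\}=\partial\Omega$ the normal is $\nu=-\nabla u/|\nabla u|$. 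The first step thus reduces matters to bounding $\Phi_{p,q}(1)$ from above by the curvature integral.

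This upper bound is precisely the content of \eqref{eqn:5.10}, obtained in the proof of Theorem \ref{thm:5.3}: starting from $\Phi_{p,q}'(1)\leq 0$ one has $0\leq\int_{\partial\Omega}F^{q(p-1)-1}(\nabla u)(H_F-p^*F(\nabla u))F(\nu)\,d\sigma$, after which H\"older gives
\begin{equation*}
\int_{\partial\Omega}F^{q(p-1)}(\nabla u)F(\nu)\,d\sigma\leq \Big(\tfrac{n-p}{p-1}\Big)^{q(p-1)}\int_{\partial\Omega}\Big|\tfrac{H_F}{n-1}\Big|^{q(p-1)}F(\nu)\,d\sigma.
\end{equation*}
Chaining this with the previous inequality and using the algebraic identity $q(p-1)-(q-1)p^{*}=\frac{(p-1)[n-1-q(p-1)]}{n-p}$ (a short simplification), the factors of $(n-p)/(p-1)$ combine to produce the exponent $(p-1)\bigl[1-\frac{(q-1)(p-1)}{n-p}\bigr]$, which gives \eqref{eqn:5.11} upon dividing by $\kappa_{n-1}$ and noting the bookkeeping of the remaining $\kappa_{n-1}$ powers.

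For the equality case, I would trace back: equality in \eqref{eqn:5.11} forces equality in both the H\"older step (hence $H_F=p^{*}F(\nabla u)$ on $\partial\Omega$, so $H_F$ is a nonnegative constant multiple of $F(\nu)$ there) and in $\Phi_{p,q}(\infty)\leq \Phi_{p,q}(1)$ (hence $\Phi_{p,q}$ is constant on $[1,\infty)$, so $\Phi_{p,q}'(1)=0$ in particular). By Lemma \ref{lem:4.2} the latter forces the traceless anisotropic second fundamental form $\frac{n-2}{n-1}\sigma_1^{2}(\kappa_F)-2\sigma_2(\kappa_F)$ to vanish along $\partial\Omega$, so $\partial\Omega$ is anisotropically umbilical and therefore a Wulff shape. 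The conclusion $\Omega=\mathcal W_r(x_0)$ then follows. The main obstacle I expect is purely bookkeeping of the exponents of $(n-p)/(p-1)$ and $\kappa_{n-1}$ when combining the limit of $\Phi_{p,q}$ with the H\"older bound; the geometric content is entirely packaged into Theorems \ref{thm:4.1}--\ref{thm:4.2} and the Kato-type identity already established.
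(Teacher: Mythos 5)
Your proposal follows the paper's own proof essentially verbatim: the second effective monotonicity $\Phi_{p,q}(+\infty)\leq\Phi_{p,q}(1)$ together with Lemma \ref{lem:4.0}, then the bound \eqref{eqn:5.10} coming from $\Phi'_{p,q}(1)\leq 0$ plus H\"older, with the exponent bookkeeping checking out, and the same rigidity argument via vanishing of the traceless anisotropic second fundamental form. The only cosmetic point is in the equality case: rather than asserting $\Phi_{p,q}$ is constant (global monotonicity is not available), one should note that equality in \eqref{eqn:5.11} forces equality in \eqref{eqn:5.10}, whose derivation used $\Phi'_{p,q}(1)\leq 0$, so $\Phi'_{p,q}(1)=0$ directly, which is exactly how the paper concludes.
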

\begin{proof}
The second effective inequality $\Phi_{p,q}(+\infty)\leq \Phi_{p,q}(1)$, combining with the asymptotic behavior \eqref{eqn:4.2}, implies that
\begin{equation*}
\begin{aligned}
&(\frac{n-p}{p-1})^{(q-1)p^*}(\kappa_{n-1})^{\frac{(q-1)(p-1)}{n-p}}({\rm Cap}_{F, p}(\O))^{1-\frac{(q-1)(p-1)}{n-p}}\\=&\lim_{\tau\rightarrow+\infty}\Phi_{p,q}(\tau)\leq \Phi_{p,q}(1)=\int\limits_{\partial\Omega}F^{q(p-1)}(\nabla u)F(\nu)d\sigma.
\end{aligned}
\end{equation*}
Then, by using \eqref{eqn:5.10}, we obtain \eqref{eqn:5.11}. The rigidity follows from the proof as in Theorem \ref{thm:5.3}.
\end{proof}

\noindent{\it Proof of Theorems \ref{thm:1.1} and  \ref{willmore}.}

Let $q=p/(p-1)$ in \eqref{eqn:5.11}, we obtain the anisotropic $L^p$ Minkowski inequality \eqref{eqn:1.02} in Theorem \ref{thm:1.1}.
Let $q=\frac{n-1}{p-1}$ in \eqref{eqn:5.11}, we obtain the anisotropic Willmore inequality \eqref{willmore'} in Theorem \ref{willmore}.

\

\section{Anisotropic Minkowski Inequality for outward $F$-minimising sets}\label{sect:6}
%

\begin{definition}
Let $E\subset\mathbb R^n$ be a bounded set of finite perimeter  (s.f.p. for short). The {\it anisotropic perimeter} of $E$ is defined by
$$P_F(E)=\int_{\p^*E} F(\nu_E)d\mathcal{H}^{n-1}$$
where $\p^*E$ is the reduced boundary of $E$ and $\nu_E$ is the outward measure theoretical unit normal to $E$.
\end{definition}
\begin{remark}When $E$ has smooth boundary, then $$P_F(E)=|\p E|_F=\int_{\p E} F(\nu)d\sigma.$$
\end{remark}
\begin{definition}[Outward $F$-minimising and strictly outward $F$-minimising sets]\label{ofm}
Let $E\subset\mathbb R^n$ be a bounded set of finite perimeter. We say that $E$ is outward $F$-minimising if  \begin{eqnarray}\label{F-min}
P_F(E)\leq P_F(G)\hbox{ for }G\subset \mathbb R^n\hbox{ bounded s.f.p. with }E\subset G.
\end{eqnarray}
 We say that $E$ is strictly outward $F$-minimising if it is outward $F$-minimising and equality in \eqref{F-min} holds only if $|G\setminus E|=0$.
\end{definition}
\begin{proposition}\label{prop:6.1} Let $\O$ be an outward $F$-minimising set with smooth boundary. Then
\begin{eqnarray}\label{inf-attain}
\inf_{\substack{\O\subset U\\ \p U \hbox{smooth}}}|\p U|_F=|\p \O|_F.
\end{eqnarray}
\end{proposition}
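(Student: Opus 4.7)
The plan is to observe that this is essentially a direct matching of the two sides by exhibiting $\Omega$ itself as a competitor on one side and invoking the outward $F$-minimising property of $\Omega$ on the other. I do not expect any genuine obstacle here; the proof should reduce to checking that the definitions line up correctly with the smoothness hypothesis.

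First, for the upper bound $\inf \leq |\partial\Omega|_F$, I would take $U=\Omega$ itself in the infimum. Since $\Omega\subset\Omega$ and $\partial\Omega$ is smooth by hypothesis, $\Omega$ is an admissible competitor, which yields the one-sided inequality $\inf\leq|\partial\Omega|_F$ immediately.

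For the lower bound $\inf \geq |\partial\Omega|_F$, I would let $U$ be any bounded open set with smooth boundary satisfying $\Omega\subset U$. Such a $U$ is automatically a bounded set of finite perimeter, so it qualifies as a competitor $G$ in the outward $F$-minimising Definition \ref{ofm}. Applying that definition to the pair $(\Omega,U)$ gives $P_F(\Omega)\leq P_F(U)$. Since both $\Omega$ and $U$ have smooth boundary, the remark following the definition of $P_F$ identifies $P_F(\Omega)=|\partial\Omega|_F$ and $P_F(U)=|\partial U|_F$. Therefore $|\partial\Omega|_F\leq|\partial U|_F$ for every admissible $U$, and taking the infimum over such $U$ yields $|\partial\Omega|_F\leq\inf_{\Omega\subset U,\ \partial U\text{ smooth}}|\partial U|_F$.

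Combining the two inequalities gives the claimed equality \eqref{inf-attain}. The only thing to be careful about is that the outward $F$-minimising property is stated for competitors in the class of bounded sets of finite perimeter, so one must note (as I did above) that any bounded domain with smooth boundary lies in that class; this is a standard fact from the theory of sets of finite perimeter and requires no additional argument.
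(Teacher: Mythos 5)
Your proof is correct and is exactly the argument the paper intends — the paper simply states that the proposition "follows directly by the definition," and your two inequalities (taking $U=\Omega$ as a competitor for the upper bound, and applying the outward $F$-minimising property together with $P_F(U)=|\p U|_F$ for smooth $U$ for the lower bound) are the natural way to spell that out.
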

\begin{proof}It follows directly by the definition.
\end{proof}
\begin{proposition}\label{prop:6.2}  Let $\O$ be an outward $F$-minimising set with smooth boundary. Then $H_F\ge 0$.
\end{proposition}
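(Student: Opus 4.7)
The plan is to argue by first variation. Take a smooth vector field $Y$ on $\rr^n$ with compact support such that $\langle Y,\nu\rangle\ge 0$ along $\p\O$, and let $\Phi_t$ denote its flow with $\O_t:=\Phi_t(\O)$. A standard argument (nonnegative normal component of the velocity) shows that for $t\ge 0$ small, $\Phi_t$ pushes $\p\O$ outward, so $\O\subset\O_t$; moreover each $\O_t$ is a bounded set of finite perimeter (in fact with smooth boundary). The outward $F$-minimising property therefore gives
\begin{equation*}
P_F(\O)\le P_F(\O_t)=|\p\O_t|_F\qquad\text{for all small }t\ge 0.
\end{equation*}

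Next I would apply the first variation formula for the anisotropic area already recalled in Section~\ref{sect:2}:
\begin{equation*}
\frac{d}{dt}\bigg|_{t=0^+}|\p\O_t|_F=\int_{\p\O}H_F\,\langle Y,\nu\rangle\,d\sigma.
\end{equation*}
Combined with the previous inequality, this forces
\begin{equation*}
\int_{\p\O}H_F\,\langle Y,\nu\rangle\,d\sigma\ge 0
\end{equation*}
for every admissible $Y$, i.e.\ for every nonnegative smooth function $\varphi=\langle Y,\nu\rangle$ on $\p\O$ (realize any such $\varphi$ as the normal component of a vector field $Y=\tilde\varphi\,\nu$ extended smoothly into a tubular neighborhood of $\p\O$).

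The conclusion $H_F\ge 0$ then follows by a pointwise localization: if $H_F(p_0)<0$ for some $p_0\in\p\O$, then by continuity there is a neighborhood $U\subset\p\O$ of $p_0$ on which $H_F<0$; choosing $\varphi\in C_c^\infty(U)$ nonnegative and not identically zero gives $\int_{\p\O}H_F\,\varphi\,d\sigma<0$, contradicting the displayed inequality. The main subtlety, rather than a real obstacle, is the geometric fact that a flow with nonnegative outward normal velocity actually produces an expanding family $\O\subset\O_t$; this is standard from the tubular-neighborhood description of smooth hypersurfaces and ensures that $\O_t$ is admissible in the outward $F$-minimising comparison \eqref{F-min}.
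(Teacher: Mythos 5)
Your proof is correct and follows essentially the same route as the paper, which simply cites the attainment of the infimum by $\O$ itself (Proposition \ref{prop:6.1}) together with the first variation formula for $|\p\O|_F$; you have just spelled out the outward deformation, the resulting sign of the first variation, and the pointwise localization in detail.
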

\begin{proof}It follows from \eqref{inf-attain} and the first variational formula of $|\p U|_F$.
\end{proof}
\begin{proposition}\label{thm:6.2}
Let $\Omega\subset\mathbb R^n$ be a bounded open set with smooth boundary. Then
\begin{equation}\label{limit-cap}
\lim\limits_{p\rightarrow1^+}{\rm Cap}_{F, p}(\Omega)={\rm Cap}_{F, 1}(\Omega)=\inf_{\substack{\O\subset U\\ \p U \hbox{smooth}}}|\p U|_F.
\end{equation}
\end{proposition}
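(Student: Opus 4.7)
The plan is to prove the two equalities separately: first identify $\mathrm{Cap}_{F,1}(\Omega)$ with $\inf_U |\partial U|_F$ via the anisotropic coarea formula, then sandwich $\mathrm{Cap}_{F,p}(\Omega)$ between its limit $\mathrm{Cap}_{F,1}(\Omega)$, with the upper direction from dominated convergence on a fixed test function and the lower direction from a truncation of the capacitary potential combined with Young's inequality.

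For the first equality, apply the anisotropic coarea formula
$$\int_{\R^n} F(\nabla u)\,dx = \int_0^{\infty} P_F(\{u>t\})\,dt$$
to any admissible $u \in C_c^{\infty}(\R^n)$ with $u \geq 1$ on $\Omega$. Since $\{u > t\} \supset \Omega$ for every $t \in (0,1)$ and Sard's theorem ensures these level sets are smooth bounded open sets for a.e.\ such $t$, one gets $\int F(\nabla u)\,dx \geq \inf_U |\partial U|_F$, whence $\mathrm{Cap}_{F,1}(\Omega) \geq \inf_U |\partial U|_F$. For the reverse direction, given smooth bounded $U \supset \Omega$, construct $u_\varepsilon \in C_c^{\infty}(\R^n)$ equal to $1$ near $\bar\Omega$, transitioning linearly from $1$ to $0$ in an $\varepsilon$-tubular neighborhood of $\partial U$ via the signed Euclidean distance, and zero outside; the standard surface-measure computation gives $\int F(\nabla u_\varepsilon)\,dx \to |\partial U|_F$ as $\varepsilon \to 0^+$, so $\mathrm{Cap}_{F,1}(\Omega) \leq |\partial U|_F$ for every such $U$.

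The upper bound $\limsup_{p \to 1^+} \mathrm{Cap}_{F,p}(\Omega) \leq \mathrm{Cap}_{F,1}(\Omega)$ follows immediately: any admissible $u \in C_c^{\infty}$ for $\mathrm{Cap}_{F,1}$ is admissible for $\mathrm{Cap}_{F,p}$, and $\int F^p(\nabla u)\,dx \to \int F(\nabla u)\,dx$ by dominated convergence on the compact support of $u$. The lower bound is the crux, and the obstacle is that the $p$-capacitary potential $\tilde u_p$ is not compactly supported, so one cannot directly integrate the pointwise Young inequality $F^p \geq pF - (p-1)$ over $\R^n$. To overcome this, for fixed $\delta \in (0,1)$ set $u_p^\delta = \max\bigl(0, (\tilde u_p - \delta)/(1-\delta)\bigr)$, which equals $1$ on $\bar\Omega$, is supported in $\{\tilde u_p \geq \delta\}$, and by the $p$-homogeneity of $F^p$ satisfies
$$\int F^p(\nabla u_p^\delta)\,dx = (1-\delta)^{-p}\int_{\{\tilde u_p > \delta\}} F^p(\nabla \tilde u_p)\,dx \leq (1-\delta)^{-p}\mathrm{Cap}_{F,p}(\Omega).$$
The sharp asymptotic $\tilde u_p(x) \sim \mathrm{Cap}_{F,p}(\Omega)^{1/(p-1)}\Gamma_{F,p}(x)$ from Proposition \ref{lem:2.1} ensures that the support of $u_p^\delta$ lies in a Wulff ball $\mathcal{W}_{R_p}$ of radius
$$R_p \;\sim\; \Big(\frac{\mathrm{Cap}_{F,p}(\Omega)}{\kappa_{n-1}}\Big)^{\frac{1}{n-p}}\Big(\frac{p-1}{n-p}\Big)^{\frac{p-1}{n-p}}\delta^{-\frac{p-1}{n-p}},$$
which stays uniformly bounded as $p \to 1^+$ since $(p-1)/(n-p) \to 0$ and $\mathrm{Cap}_{F,p}(\Omega)$ is controlled by the already-established upper bound.

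Mollifying $u_p^\delta$ into $C_c^{\infty}$ (with negligible change in $R_p$ and in the integrals) and applying Young's inequality together with the fact that $u_p^\delta$ is admissible for $\mathrm{Cap}_{F,1}$, one obtains
$$(1-\delta)^{-p}\mathrm{Cap}_{F,p}(\Omega) \;\geq\; \int F^p(\nabla u_p^\delta)\,dx \;\geq\; p\,\mathrm{Cap}_{F,1}(\Omega) - (p-1)|\mathcal{W}_{R_p}|.$$
Sending $p \to 1^+$ with $\delta$ fixed yields $\liminf_{p\to 1^+}\mathrm{Cap}_{F,p}(\Omega) \geq (1-\delta)\mathrm{Cap}_{F,1}(\Omega)$, and then $\delta \to 0^+$ closes the argument. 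The main technical obstacle throughout is precisely the uniform support bound for the truncated potentials, which is why the sharp decay estimate from Proposition \ref{lem:2.1} is indispensable.
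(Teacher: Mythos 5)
Your proposal is correct in substance, but it reaches the conclusion by a genuinely different route at the crucial step. The paper splits the proof as: (i) $\mathrm{Cap}_{F,1}(\Omega)\ge\inf_U|\partial U|_F$ by the coarea formula (as you do); (ii) $\mathrm{Cap}_{F,1}(\Omega)\le\liminf_{p\to1^+}\mathrm{Cap}_{F,p}(\Omega)$ by testing $\mathrm{Cap}_{F,1}$ with $u^q$, $q=\tfrac{p(n-1)}{n-p}$, and combining H\"older with the sharp anisotropic Sobolev inequality of Alvino--Ferone--Trombetti--Lions, using that Talenti's constant $T_{n,p}$ has the right limit as $p\to1^+$; (iii) $\limsup_{p\to1^+}\mathrm{Cap}_{F,p}(\Omega)\le\inf_U|\partial U|_F$ by the distance-function cutoff near $\partial U$. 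Your upper direction is only a mild reshuffling of (i) and (iii) (you prove $\mathrm{Cap}_{F,1}\le|\partial U|_F$ directly and then pass $p\to1^+$ on a fixed test function by dominated convergence), but your lower bound replaces (ii) entirely: truncating the $p$-capacitary potential, using $p$-homogeneity, the pointwise Young inequality $F^p\ge pF-(p-1)$, and a uniform bound on the support $\{\tilde u_p\ge\delta\}$. This is a valid, purely potential-theoretic alternative that avoids the sharp Sobolev constant altogether, at the price of invoking the material of Section \ref{sect:3}. One refinement you should make: Proposition \ref{lem:2.1} is an asymptotic statement as $|x|\to\infty$ for each fixed $p$, so by itself it does not give a radius bound that is uniform in $p$; the clean way to get your containment is the comparison principle against the explicit Wulff-ball potential, i.e.\ if $\Omega\subset\mathcal W_{r_0}$ then $\tilde u_p(x)\le\bigl(F^o(x)/r_0\bigr)^{-\frac{n-p}{p-1}}$ outside $\mathcal W_{r_0}$, whence $\{\tilde u_p\ge\delta\}\subset\mathcal W_{r_0\delta^{-(p-1)/(n-p)}}$ with radius tending to $r_0$ as $p\to1^+$; with that substitution (and the standard mollification/rescaling you already flag to restore smooth admissibility), your argument closes correctly.
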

\begin{proof} The second equality can be found in \cite{Maz}, Lemma 2.2.5. See also \cite{AFM}. For the convenience of the reader, we provide a proof here.

\noindent{\bf Step 1.} \begin{equation}\label{eqn:6.6}
{\rm Cap}_{F, 1}(\Omega)\ge \inf_{\substack{\O\subset U\\ \p U \hbox{smooth}}}|\p U|_F.
\end{equation}

By co-area formula, for any $u\in{C}^{\infty}_c(\mathbb R^n)$ with $u\geq\chi_{\Omega}$
we have that
\begin{equation*}
  \int\limits_{\mathbb R^n}F(\nabla u) dx\geq\int^1_0 |\p \{u>t\}|_Fdt\geq \inf_{\substack{\O\subset U\\ \p U \hbox{smooth}}}|\p U|_F.
\end{equation*}
%
In particular, taking the infimum over any such
$u$, we get \eqref{eqn:6.6}.

\noindent{\bf Step 2.}%
\begin{equation}\label{eqn:6.7}
{\rm Cap}_{F, 1}(\Omega)\leq\liminf_{p\rightarrow1^+}{\rm Cap}_{F, p}(\Omega).
\end{equation}

For every $u\in{C}^{\infty}_{c}(\mathbb R^n)$ with $u\geq\chi_{\Omega}$ and any $q>0$, by the definition of ${\rm Cap}_{F, 1}(\O)$ and the H\"older inequality, we have
\begin{equation}\label{eqn:6.8}
\begin{aligned}
{\rm Cap}_{F, 1}(\O)&\leq\int_{\mathbb R^n}F(\nabla u^q)dx
&\leq q\Big(\int_{\mathbb R^n}u^{\frac{(q-1)p}{p-1}}dx\Big)^{(p-1)/p}\Big(\int_{\mathbb R^n}F^p(\nabla u)dx\Big)^{1/p}.
\end{aligned}
\end{equation}
Choose 
$q=\frac {p(n-1)}{n-p}>1$, then $\frac{(q-1)p}{p-1}=\frac{np}{n-p}$
%
which is the Sobolev critical exponent of $p$.
The anisotropic Sobolev inequality with best constant \cite{AFTL} says that
\begin{eqnarray}\label{sobolev}
\left(\int_{\mathbb R^n}u^{\frac{np}{n-p}}dx\right)^{\frac{n-p}{np}}\le \left(\frac{\o_{n-1}}{\k_{n-1}}\right)^{\frac1n}T_{n, p} \left( \int_{\mathbb R^n}F^p(\nabla u)dx\right)^{\frac{1}{p}}.
\end{eqnarray}
where $T_{n, p}$ is the best constant in the classical Sobolev inequality, due to Talenti \cite{Talenti}.

It follows from \eqref{eqn:6.8} and \eqref{sobolev} that
\begin{equation}\label{eqn:6.10}
\begin{aligned}
{\rm Cap}_{F, 1}(\O)\leq \frac {p(n-1)}{n-p}\left(\frac{\o_{n-1}}{\k_{n-1}}\right)^{\frac{p-1}{n-p}} T_{n, p}^{\frac{n(p-1)}{n-p}}\Big(\int_{\mathbb R^n}F^p(\nabla u)
dx\Big)^{\frac {n-1}{n-p}},
\end{aligned}
\end{equation}
Taking the infimum over any $u \in{C}^{\infty}_c(\mathbb R^n)$ with
$u\geq\chi_{\Omega}$ in \eqref{eqn:6.10}, we obtain
\begin{equation}\label{eqn:6.11}
{\rm Cap}_{F, 1}(\Omega)\leq \frac {p(n-1)}{n-p}\left(\frac{\o_{n-1}}{\k_{n-1}}\right)^{\frac{p-1}{n-p}} T_{n, p}^{\frac{n(p-1)}{n-p}}
{\rm Cap}_{F, p}(\Omega)^{\frac {n-1}{n-p}}.
\end{equation}
Passing to the limit in \eqref{eqn:6.11} as $p\to 1^+$, since $\lim_{p\to 1^+}T_{n,p}= {n}^{\frac1n-1}\o_{n-1}^{-\frac1n}$, we conclude \eqref{eqn:6.7}.

%
%
%
%
%
%

\noindent{\bf Step 3.}
\begin{equation}\label{eqn:6.12}
\limsup_{p\rightarrow1^+}{\rm  Cap}_{F, p}(\Omega)\leq \inf_{\substack{\O\subset U\\ \p U \hbox{smooth}}}|\p U|_F.
\end{equation}
Let $U\subset \mathbb R^n$ be any open and bounded set with smooth boundary such that $\Omega\subset U$. Let $d_U$ be the distance function $d_U(x) ={\rm dist}(x, U)$. It is well-known that $d_U$ is smooth in a neighborhood of $U$.  Introduce a smooth cut-off function
$\chi_{\varepsilon}$ satisfying
\begin{equation*}
\left\{
\begin{aligned}
\chi_{\varepsilon}&=1\ \ {\rm in}\ t<\varepsilon,\\
-\frac{1}{\varepsilon}<&\chi'_{\varepsilon}(t)<0\ \ {\rm in}\ \varepsilon\leq t\leq2\varepsilon,\\
\chi_{\varepsilon}&=0\ \ {\rm in}\ t>2\varepsilon,
\end{aligned}\right.
\end{equation*}
and  set $\eta_{\varepsilon}=\chi_{\varepsilon}(d_U)$. Choosing  $\varepsilon$  small enough so that $\eta_{\varepsilon}$ is smooth. Hence
\begin{equation*}
  {\rm Cap}_{F, p}(\Omega)\leq\int\limits_{\mathbb R^n}F^p(\nabla \eta_{\varepsilon})dx
\end{equation*}
for any $p\geq 1$. Letting $p\rightarrow1^+$, by using the co-area formula and the mean value theorem, we get
\begin{eqnarray*}
\limsup_{p\rightarrow1^+}{\rm Cap}_{F, p}(\Omega)&\leq&\int\limits_{\mathbb R^n}F(\nabla \eta_{\varepsilon})dx
=\int^{2\varepsilon}_{\varepsilon}|\chi'_{\varepsilon}(t)| \int_{\{d_U=t\}}F(\nu) d\sigma dt\\&\le & \int_{\{d_U=r_\varepsilon\}}F(\nu) d\sigma,\end{eqnarray*}
%
for some $r_{\varepsilon} \in (\varepsilon, 2\varepsilon)$.  
%
%
Let $r_{\varepsilon}\rightarrow0^+$,
we conclude that
\begin{equation*}
\limsup_{p\rightarrow1^+} {\rm Cap}_{F, p}(\O)\leq |\p U|_F
\end{equation*}
for any bounded set $U$ with smooth boundary such that $\Omega\subset U$. \eqref{eqn:6.12} follows.


\noindent{\bf Step 4.} The assertion \eqref{limit-cap} follows from \eqref{eqn:6.6}, \eqref{eqn:6.7} and \eqref{eqn:6.12}.
%
\end{proof}
%

%

\noindent{\it Proof of Theorem \ref{thm:1.2}}. The inequality \eqref{eqn:1.03} follows from Theorem \ref{thm:1.1} and Proposition \ref{thm:6.2} by letting $p\to 1^+$.

When $\O$ is a Wulff ball, then in view of \eqref{area-vol2}, we see that
\begin{eqnarray*}
\inf_{\substack{\O\subset U\\ \p U \hbox{smooth}}}|\p U|_F=|\p \O|_F.
\end{eqnarray*}
Hence the equality in \eqref{eqn:1.03} holds.
\qed

\medskip

\noindent{\it Proof of Corollary \ref{cor:1.3}}. The first assertion follows directly from Theorem \ref{thm:1.2}, Propositions \ref{prop:6.1} and \ref{prop:6.2}.

Next we consider the equality case in \eqref{eqn:1.04} for some strictly outward  $F$-minimising set $\O$
with smooth and strictly $F$-mean-convex boundary.
Let $\{\partial\Omega_t\}_{t\in[0,T)}$ be the evolution
of $\partial\Omega$ under smooth inverse anisotropic mean curvature flow, for some $T > 0$. By \cite{DGX}, we know that the weak  inverse anisotropic mean curvature flow starting
at $\partial\Omega$ coincides with the smooth flow $\{\Omega_t\}_{t\in[0,T^*)}$ for some $0<T^*\leq T$.
In view of Proposition 3.4  of \cite{DGX} (see also Lemma 2.4 of \cite{HI}),
$\Omega_t$ is strictly outward $F$-minimising and strictly $F$-mean-convex for every  $t \in [0, T^*)$
. Then \eqref{eqn:1.04} holds for every $\partial\Omega_t$ with  $t \in [0, T^*)$.  For  $t \in [0, T^*)$, we can  define
monotonic quantity  as following
\begin{equation*}
\Psi(t)=|\p \Omega_t|_F^{-\frac{n-2}{n-1}}\int_{\partial\Omega_t}H_FF(\nu)d\sigma.
\end{equation*}
%
 by using Proposition 2.1 of \cite{Xia},
\begin{equation}\label{eqn:6.14}
\Psi'(0)=-|\p \Omega|_F^{-\frac{n-2}{n-1}}\int\limits_{\partial\Omega}\frac{1}{H_F}\left|h_F-\frac{H_F}{n-1}g_F\right|^2 d\sigma\leq 0.
\end{equation}
%
Here $h_F-\frac{H_F}{n-1}g_F$ is the traceless anisotropic second fundamental form.
If the strict inequality $\Psi'(0)<0$ holds, it follows that
 $$\Psi(t)<\Psi(0)=(n- 1)\kappa_{n-1}^{1/(n-1)}\hbox{ for some } t\in(0, T^*), $$which contradicts \eqref{eqn:1.04} for some outward anisotropic minimising $\Omega_t$ with strictly anisotropic mean-convex boundary. Therefor $\Psi'(0)=0$. It follows from \eqref{eqn:6.14} that $\partial\Omega$ is totally anisotropic umbilical and in turn, $\partial\Omega$ is of a Wulff shape.
\qed

\medskip

\noindent{\it Proof of Corollary \ref{cor:1.4}}. For any $U\subset \rr^n$ bounded with smooth boundary and $\O\subset U$, by the Wulff inequality \eqref{wulff-ineq}, we have
\begin{eqnarray}\label{area-vol}
\frac{|\p U|_F}{\k_{n-1}}\ge\left(\frac{n|U|}{\k_{n-1}}\right)^{\frac{n-1}{n}}\ge \left(\frac{n|\O|}{\k_{n-1}}\right)^{\frac{n-1}{n}}.
\end{eqnarray}
Thus
\begin{eqnarray}\label{area-vol2}
\frac{1}{\k_{n-1}}\inf_{\substack{\O\subset U\\ \p U \hbox{smooth}}}|\p U|_F\ge \left(\frac{n|\O|}{\k_{n-1}}\right)^{\frac{n-1}{n}}.
\end{eqnarray}
The inequality follows from Theorem \ref{thm:1.2} and \eqref{area-vol2}. The equality classification for \eqref{eqn:1.05} follows from that of the Wulff inequality.
\qed

\

\noindent{\bf Acknowledgements.} This work was supported by  NSFC (Grant No. 11871406). We would like to thank Dr. M. Fogagnolo for attracting our attention to their recent preprint \cite{FM} and explaining the new reformation of strictly outward minimizing hull to us. We would also like to thank Professors  V. Agostiniani, L. Mazzieri and Deping Ye for their interest.

\


\end{document}